\title[Vanishing viscosity limit to planar rarefaction wave for 2D Navier-Stokes]
{Vanishing viscosity limit to the planar rarefaction wave for the two-dimensional compressible Navier-Stokes equations}
\author[L.-A. Li]{Lin-An Li}
\address{Institute of Applied Mathematics, AMSS, Academia Sinica, Beijing 100190, P. R. China; 
  and School of Mathematical Sciences, University of Chinese Academy of Sciences, Beijing 100049, P. R. China.}
\email{linanli@amss.ac.cn}
\author[D. Wang]{Dehua Wang}
\address{Department of Mathematics, University of Pittsburgh, Pittsburgh, PA 15260, USA.}
\email{dwang@math.pitt.edu}
\author[Y. Wang]{Yi Wang}
\address{
Academy of Mathematics and Systems Science, Chinese Academy of Sciences, Beijing 100190, P. R. China;
 and School of Mathematical Sciences, University of Chinese Academy of Sciences, Beijing 100049, P. R. China.}
\email{wangyi@amss.ac.cn}
\newtheorem{theorem}{Theorem}[section]
\newtheorem{lemma}{Lemma}[section]
\newtheorem{proposition}{Proposition}[section]
\theoremstyle{definition}
\theoremstyle{remark}
\newtheorem{remark}{Remark}[section]
\newcommand{\bbr}{\mathbb R}
\newcommand{\bbt}{\mathbb T}
\newcommand{\bbz}{\mathbb Z}
\renewcommand{\div}{{\rm div}}
\def\charf {\mbox{{\text 1}\kern-.30em {\text l}}}
\def\lam{\lambda}  
\def\di{\displaystyle}
\begin{document}

\date{\today}

\begin{abstract}
The vanishing viscosity limit of the two-dimensional (2D) compressible isentropic Navier-Stokes equations is studied in the case that the corresponding 2D inviscid Euler equations admit a planar rarefaction wave solution. It is proved that there exists a family of smooth solutions  for the 2D compressible Navier-Stokes equations  converging to the planar rarefaction wave solution with arbitrary strength for the 2D Euler equations. A uniform convergence rate is obtained in terms of the viscosity coefficients away from the initial time. In the proof, the hyperbolic wave is crucially introduced to recover  the physical viscosities of the inviscid rarefaction wave profile,   in order to rigorously justify the vanishing viscosity limit.
\end{abstract}

\keywords{Planar rarefaction wave, hyperbolic wave, Riemann problem, vanishing viscosity, Navier-Stokes equations, Euler equations.}
\subjclass[2010]{35Q35,  76N10}	
\date{\today}

\maketitle 
%
%
\section{Introduction}  
\setcounter{equation}{0}
In this paper, we investigate the vanishing viscosity limit of the two-dimensional  compressible and isentropic Navier-Stokes equations:
\begin{equation}  \label{NS}
\begin{cases}
\displaystyle \rho_t + \div(\rho \textbf{u}) = 0,    \\ 
\displaystyle (\rho \textbf{u})_t + \div(\rho \textbf{u} \otimes \textbf{u}) + \nabla p(\rho) = \mu_1\triangle\textbf{u} + (\mu_1 + \lam_1)\nabla \div\textbf{u},
\end{cases}
\end{equation}
where $\rho = \rho(t, x_1, x_2) \geq 0, \textbf{u} = \textbf{u}(t, x_1, x_2) = (u_1, u_2)(t, x_1, x_2)$ and $p=p(t, x_1, x_2)$ represent the fluid density, velocity and  pressure, respectively;   $(x_1, x_2) \in \bbr^2$ is the spatial  variable   and $t>0$ 
is the time variable.
The pressure $p=p(\rho)$ is given by the   $\gamma$-law:
\[
p(\rho) = \frac{\rho^\gamma}{\gamma}
\]
with $\gamma \geq 1$   the adiabatic constant.
Both the shear viscosity $\mu_1$ and the bulk viscosity $\lam_1$ are constant satisfying 
\begin{equation} \label{VIS}
  \mu_1 > 0, \qquad \mu_1 + \lam_1 \geq 0,
\end{equation}
and we take
$$
\mu_1 = \mu\varepsilon,\qquad \lam_1 = \lam\varepsilon,
$$ 
where  $\varepsilon>0$ is  the vanishing parameter,  and $\mu$ and $\lam$ are the prescribed uniform-in-$\varepsilon$ constants. For the spatial domain, we consider the case $x_1\in \bbr$   and $x_2\in \bbt:=\bbr/\mathbb{Z}$, the one-dimensional unit flat torus.

Since we are concerned with the vanishing viscosity limit to the planar rarefaction wave for the system \eqref{NS}, we consider the following initial data:
\begin{equation} \label{NSI}
(\rho, \textbf{u})(0, x_1, x_2) = (\rho, u_1, u_2)(0, x_1, x_2) = (\rho_0(x_1,x_2), u_{10}(x_1, x_2), u_{20}(x_1, x_2)),
\end{equation}
 and the far field  condition  of solutions in the $x_1$-direction:
\begin{equation}
 (\rho, u_1, u_2)(t, x_1, x_2)\rightarrow (\rho_\pm, u_{1\pm}, 0), \qquad{\rm as} \quad x_1 \rightarrow \pm\infty,
\end{equation}
where $\rho_\pm>0, u_{1\pm}$ are the prescribed constants.
The periodic boundary condition is imposed on $x_2 \in\mathbb{T}$ for the solution $(\rho, u_1, u_2)(t,x_1,x_2)$ to \eqref{NS}, where the end states $(\rho_\pm, u_{1\pm})$ are connected by the rarefaction wave solution to the Riemann problem of  the corresponding one-dimensional (1D) hyperbolic system of conservation laws:
\begin{equation}  \label{ES}
\begin{cases}
\displaystyle \rho_t + (\rho u_1)_{x_1} = 0,               \quad\qquad \qquad x_1 \in \bbr, ~t > 0, \\
\displaystyle (\rho u_1)_t + (\rho u_1^2 + p(\rho))_{x_1} = 0,
\end{cases}
\end{equation}
with the Riemann initial data
\begin{equation} \label{ini}
(\rho_0^r, u_{10}^r)(x_1) = \begin{cases}
(\rho_-, u_{1-}),     \quad x_1 < 0,\\
(\rho_+, u_{1+}),     \quad x_1 > 0.
\end{cases}
\end{equation}
Formally speaking, as $\varepsilon$ tends to zero, the two-dimensional (2D) compressible Navier-Stokes equations \eqref{NS}-\eqref{NSI} converge to the corresponding 2D inviscid compressible Euler equations:
\begin{equation}  \label{2ES}
\begin{cases}
\displaystyle \rho_t + (\rho u_1)_{x_1} + (\rho u_2)_{x_2} = 0,               \quad\qquad \qquad (x_1, x_2) \in \bbr\times\bbt, ~t > 0, \\
\displaystyle (\rho u_1)_t + (\rho u_1^2 + p(\rho))_{x_1} + (\rho u_1 u_2)_{x_2} = 0, \\
\di (\rho u_2)_t + (\rho u_1u_2 )_{x_1} + (\rho u_2^2 + p(\rho))_{x_2} = 0.
\end{cases}
\end{equation}
In the regime of the planar rarefaction wave, we consider the Euler system \eqref{2ES} with the following Riemann initial data
\begin{equation} \label{2ini}
(\rho_0^r, u_{10}^r, u_{20}^r)(x_1) = \begin{cases}
(\rho_-, u_{1-}, 0),     \quad x_1 < 0,\\
(\rho_+, u_{1+}, 0),     \quad x_1 > 0.
\end{cases}
\end{equation}
We note that, although  the $u_2$-component is continuous on the both sides of $x_1 = 0$ in \eqref{2ini},
 the one-dimensional Riemann problem \eqref{ES}-\eqref{ini} and the two-dimensional Riemann problem \eqref{2ES}-\eqref{2ini} have some substantial difference. For example, the results in \cite{CDK,CK} indicate that 
 there are infinitely many bounded admissible weak solutions to \eqref{2ES}-\eqref{2ini} satisfying the  entropy condition for the shock Riemann initial data,  
 and their   construction of weak solutions  based on the convex integration method in DeLellis and Szekelyhidi \cite{DS}   for the two-dimensional system  may not  be applied to the one-dimensional problem \eqref{ES}-\eqref{ini}. 
 The results in \cite{CK, CDK} were extended to the Riemann initial data with shock or contact discontinuity in \cite{KM,BCK}. 
Nevertheless,  the uniqueness of the uniformly bounded admissible weak solution was proved in Chen-Chen\cite{CC},  Feireisl-Kreml \cite{FK},  and Feireisl-Kreml-Vasseur \cite{FKV} for the Riemann solution  containing only rarefaction waves to \eqref{2ES}-\eqref{2ini} 
even with vacuum states, which is similar to the one-dimensional case.
Our current paper is devoted to establish the mathematical justification of the vanishing viscosity limit of the 2D compressible Navier-Stokes equations \eqref{NS}-\eqref{NSI} to the planar rarefaction wave solution of the 2D Riemann problem of the corresponding compressible Euler equations \eqref{2ES}-\eqref{2ini}.
  
There have been many results in literature on the vanishing viscosity limit to the basic wave patterns for the system of viscous conservation laws in the one-dimensional case. For the 1D system of the hyperbolic conservation laws with artificial viscosity,
Goodman and Xin \cite{GX} applied a matched asymptotic expansion method to first prove the viscous limit for the piecewise smooth solutions separated by noninteracting shock waves.
 Later Yu \cite{Y} extended the result in \cite{GX} for the corresponding hyperbolic conservation laws with both shocks and initial layers.     Bianchini and Bressan \cite{BB} proved the vanishing artificial viscosity limit in the general small BV spaces although the problem is still unsolved for the physical systems such as the compressible Navier-Stokes equations. 
 For the one-dimensional compressible   isentropic Navier-Stokes equations, the vanishing viscosity limit was  obtained in Hoff and Liu \cite{HL}    for the piecewise constant shocks even with initial layers,      in Xin \cite{X-1} for the zero dissipation limit to the rarefaction wave for both the Riemann data and the well-prepared smooth data,  and in Huang, Li and Wang \cite{H-L-W} and Li and Wang \cite{L-W}  for the  zero dissipation limit in the case of the rarefaction wave connected with the vacuum states. 
The result in \cite{GX} was extended in Wang \cite{W-1} to the one-dimensional isentropic Navier-Stokes equations.
For the nonisentropic Navier-Stokes equations, the results on the zero dissipation limit to the corresponding full Euler system with basic wave patterns can be found in  \cite{JNS,XZ}  for the rarefaction wave, in \cite{W-2} for the shock wave, in  \cite{M} for the contact discontinuity, and in \cite{HWY-1, HWY-2} for the superposition of two rarefaction waves and a contact discontinuity and the superposition of one shock and one rarefaction wave cases. 
More recently, Huang, Wang, Wang and Yang \cite{HWWY} justified the vanishing viscosity limit of the compressible Navier-Stokes equations for the generic 1D Riemann solution which may contain shock and rarefaction waves and contact discontinuity. On the other hand,  Chen and Perepelitsa \cite{CP} proved the vanishing viscosity limit to the compressible Euler equations for the one-dimensional compressible Navier-Stokes equations in $L^p$-framework by using the compensated compactness method. 
For other related results on the inviscid limit in literature, see \cite{CG18a,CLQ18,CEIV,DN2018,Kato84,BTW18,Masmoudi2007} and the references therein.

Although there have been satisfactory results mentioned above on the vanishing viscosity limit to the basic wave patterns for the viscous conservation laws in the one-dimensional case, there are very few results on the vanishing viscosity limit to the planar wave patterns for the compressible Navier-Stokes equations \eqref{NS} in the multi-dimensional case.  Motivated by the recent progress on the time-asymptotic stability of the planar rarefaction wave to the multi-dimensional compressible Navier-Stokes equations by Li, Wang and Wang \cite{LWW} and Li and Wang \cite{LW}, in the present paper we aim to justify the vanishing viscosity limit to the planar rarefaction wave for the two-dimensional compressible Navier-Stokes equations \eqref{NS} with physical constraints \eqref{VIS} and obtain the decay rate with respect to the viscosity coefficients.
 Compared with the one-dimensional vanishing viscosity limit results in \cite{X-1, H-L, H-L-W}, the additional difficulties here lie in the propagation of the planar rarefaction wave in $x_2$-direction and its interactions with the wave in $x_1$-direction due to the higher dimensionality. Therefore, we need to introduce a new wave, called hyperbolic wave, to recover the physical viscosity of the compressible Navier-Stokes equations for the inviscid rarefaction wave profile satisfying the compressible Euler equations exactly, which is partially motivated by the work \cite{HWY-2} for the viscous limit of the one-dimensional  full compressible Navier-Stokes equations in the case of superposition of both shock and rarefaction waves. Note that this hyperbolic wave plays a crucial role for the uniform estimates with respect to the viscosity coefficients for the perturbation of the solution to \eqref{NS} around both the rarefaction wave profile and the new hyperbolic wave and it seems that we can not justify the vanishing viscosity limit for 2D Navier-Stokes equations \eqref{NS} without this hyperbolic wave by  using only the rarefaction wave profile itself. By using the rarefaction wave profile and the new hyperbolic wave as the ansatz, the vanishing viscosity limit problem can be reformulated as a time-asymptotic stability problem around the background solution profile which consists of rarefaction wave and hyperbolic wave so that the energy method can be applied after some suitable scalings. Furthermore, we need some key observations on the cancellations in the physical structures of the system \eqref{NS} for the flux terms and viscosity terms in order to close the a priori estimates, which is partially motivated by our recent time-asymptotic stability results in Li and Wang \cite{LW} and  Li, Wang and Wang \cite{LWW} for 2D/3D compressible viscous fluids, where it is proved that if the initial data is around the planar rarefaction wave data, then the 2D initial value problem \eqref{NS}-\eqref{NSI} has a unique global smooth solution that  goes to the planar rarefaction wave fan as $t \to \infty$ with the viscosity coefficients $\mu_1$ and $\lambda_1$ being fixed. In the present paper, our goal is  to justify the vanishing viscosity limit of 2D compressible Navier-Stokes equations \eqref{NS}-\eqref{NSI} to the planar rarefaction wave as the viscosity parameter $\varepsilon \to 0+$ and then both the viscosity coefficients $\mu_1, \lambda_1\to 0$. Compared with the time-asymptotic stability results of planar rarefaction wave in \cite{LW,LWW}, some new difficulties occur and the hyperbolic wave is crucially introduced to justify the vanishing viscosity limit.  More precisely, the detailed 2D vanishing viscosity limit result can be found in Theorem \ref{theorem1} below.
 
Next we describe the one-dimensional rarefaction wave to \eqref{ES} and the planar rarefaction wave to \eqref{2ES}.  The Euler system \eqref{ES} is   strictly hyperbolic   for $\rho >0$ with two distinct eigenvalues
\[
\lambda_1(\rho, u_1) = u_1 - \sqrt{p'(\rho)},   \qquad \lambda_2(\rho, u_1) = u_1 + \sqrt{p'(\rho)}.
\]
The two   right eigenvectors are denoted by $r_1(\rho, u_1)$ and $r_2(\rho, u_1)$,  
and the both characteristic fields are genuinely nonlinear, i.e.,  
$$\nabla_{(\rho, u_1)}\lambda_i(\rho, u_1)\cdot r_i(\rho, u_1)\neq 0$$
for any $\rho>0, u_1$ and $i=1,2.$
The $i$-Riemann invariant $z_i(\rho, u_1)~(i=1,2)$ to the Euler system \eqref{ES} is given by
\begin{equation}\label{ri}
z_i (\rho, u_1) = u_1 + (-1)^{i+1}\int^{\rho}\frac{\sqrt{p'(s)}}{s}ds,
\end{equation}
satisfying $\nabla_{(\rho, u_1)}z_i(\rho, u_1)\cdot r_i(\rho, u_1)\equiv0 ~(i=1,2)$ for all $\rho>0$ and $u_1$.
In this paper we  consider only the 2-rarefaction wave without loss of generality,  since the 1-rarefaction wave and the superposition of two rarefaction waves can be treated similarly. If the 2-Riemann invariant $z_2(\rho, u_1)$ is constant and the second eigenvalue $\lambda_2(\rho, u_1)$ is expanding along the 2-rarefaction wave curve,  i.e.,
\begin{equation} \label{RC}
u_{1+} - \int_{\rho_-}^{\rho_+}\frac{\sqrt{p'(s)}}{s}ds = u_{1-},  \qquad \lambda_2(\rho_+, u_{1+})>\lambda_2(\rho_-, u_{1-}),
\end{equation}
the Riemann problem \eqref{ES}-\eqref{ini} has a self-similar wave fan $(\rho^r, u_1^r)(\frac{x_1}{t})$   consisting of only the constant states and the centered 2-rarefaction waves (cf. \cite{L}). 
The planar rarefaction wave solution to the two-dimensional compressible Euler equations \eqref{2ES}-\eqref{2ini} is then defined as $(\rho^r, u_1^r,0)(\frac{x_1}{t})$.

Now we state our main result   as follows.

\begin{theorem} \label{theorem1}
	Let $(\rho^r, u_1^r, 0)(\frac{x_1}{t})$ be the planar 2-rarefaction wave to the 2D Euler system \eqref{2ES} which connects the constant states $(\rho_\pm, u_{1\pm}, 0)$ satisfying (\ref{RC}) with $\rho_\pm >0$ and $T>0$ be any arbitrarily large but fixed time. Then there exists a positive constant $\varepsilon_0$ such that for any $\varepsilon \in (0, \varepsilon_0)$, we can construct a family of smooth solutions $(\rho^\varepsilon, \textbf{u}^\varepsilon) = (\rho^\varepsilon, u_1^\varepsilon, u_2^\varepsilon)$ up to time $T$ with the initial value \eqref{PNSI} to the compressible Navier-Stokes equations \eqref{NS} satisfying
	\[
	\begin{cases} 
	(\rho^\varepsilon - \rho^r, u_1^\varepsilon - u_1^r, u_2^\varepsilon) \in C^0(0, T; L^2(\bbr \times \bbt)), \\
	(\nabla\rho^\varepsilon, \nabla\textbf{u}^\varepsilon) \in C^0(0, T; H^1(\bbr \times \bbt)), \\
	\nabla^3 \textbf{u}^\varepsilon \in L^2(0, T; L^2(\bbr \times \bbt)),
	\end{cases}
	\]
Moreover, for any small positive constant $h$, there exists a constant $C_{h,T}$ independent of $\varepsilon$, such that
	\begin{equation} \label{AB}
	\sup_{h \leq t \leq T} \big\|(\rho^\varepsilon, u^\varepsilon_1, u^\varepsilon_2)(t, x_1, x_2) - (\rho^r, u_1^r, 0)(\frac{x_1}{t})\big\|_{L^\infty(\bbr\times\bbt)} \leq C_{h,T} \varepsilon^{\frac 16}|\ln \varepsilon|.
	\end{equation}
 As the viscosities vanish, i.e. $\varepsilon \to 0$, the solution $(\rho^\varepsilon, \textbf{u}^\varepsilon) = (\rho^\varepsilon, u_1^\varepsilon, u_2^\varepsilon) (t, x_1, x_2)$ converges to the planar rarefaction wave fan $(\rho^r, u_1^r, 0)(\frac{x_1}{t})$ pointwisely except at the original point $(0, 0)$, and furthermore, $$(\rho^\varepsilon, \textbf{u}^\varepsilon)(t, x_1, x_2) \rightarrow (\rho^r, u_1^r, 0)(\frac{x_1}{t}), ~~~{\rm a.e.}~~ {\rm in}~~ \bbr^+ \times\bbr \times \bbt.$$
\end{theorem}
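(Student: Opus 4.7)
The plan is to construct the family of smooth solutions via a carefully chosen ansatz consisting of a smoothed rarefaction profile augmented by a hyperbolic wave correction, then to view the exact solution as a small perturbation of this ansatz and close uniform-in-$\varepsilon$ energy estimates. First, since the self-similar fan $(\rho^r, u_1^r)(x_1/t)$ is only Lipschitz and singular at $(0,0)$, I would smooth it by the classical Matsumura--Nishihara device: let the $2$-Riemann invariant solve an inviscid Burgers equation with a smooth monotone initial datum interpolating between $\lambda_2(\rho_-, u_{1-})$ and $\lambda_2(\rho_+, u_{1+})$, while keeping the $1$-Riemann invariant constant. This yields a smooth profile $(\bar\rho, \bar u_1)(t, x_1)$ satisfying \eqref{ES} exactly, matching the correct end states, and obeying the standard decay $\|\partial_{x_1}(\bar\rho, \bar u_1)\|_{L^p} \lesssim (1+t)^{-1+1/p}$ together with $L^\infty$ convergence to the rarefaction fan away from $(0,0)$.

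Next, inserting $(\bar\rho, \bar u_1, 0)$ into \eqref{NS} leaves a viscous residual of order $\varepsilon\,\partial_{x_1}^2 \bar u_1$ that is only $O(\varepsilon)$ pointwise but is not time-integrable against the expected energy norm as $\varepsilon \to 0$. To remove this residual at the ansatz level, I would introduce a hyperbolic wave correction $(d, w_1, w_2)(t, x_1)$, in the spirit of \cite{HWY-2}, solving the linearization of \eqref{ES} about $(\bar\rho, \bar u_1)$ sourced precisely by the viscous residual; the resulting correction is $O(\varepsilon)$ in amplitude with derivative decay rates comparable to those of the smoothed rarefaction wave. The full ansatz $(\tilde\rho, \tilde u_1, \tilde u_2) := (\bar\rho + d,\, \bar u_1 + w_1,\, w_2)$, still depending only on $(t, x_1)$, now satisfies \eqref{NS} up to a genuinely higher-order error in $\varepsilon$, and the vanishing viscosity problem is thereby recast as a time-asymptotic stability problem for the perturbation $(\phi, \boldsymbol{\psi}) := (\rho^\varepsilon - \tilde\rho,\, \mathbf{u}^\varepsilon - \tilde{\mathbf{u}})$.

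The core of the argument is a hierarchical energy estimate on $(\phi, \boldsymbol{\psi})$ in $H^2$ for $\boldsymbol{\psi}$ and $H^1$ for $\phi$. I would begin with a relative-entropy type $L^2$ estimate that exploits the positive damping $\int \partial_{x_1}\bar u_1 \, \phi^2\, dx_1 dx_2$ generated by genuine nonlinearity of the $2$-family, together with the dissipation $\varepsilon\int|\nabla\boldsymbol{\psi}|^2$; then proceed with first-order estimates, differentiating in $x_1$ and $x_2$ separately and exploiting the fact that the ansatz is independent of $x_2$, so that the pure $x_2$-derivative equations decouple from the wave structure and only see the constant-in-$x_1$ background at leading order; finally a second-order estimate on $\nabla^2 \boldsymbol{\psi}$ via the parabolic character of the momentum equation. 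Throughout, careful use of the structural cancellations in the pressure--divergence coupling of \eqref{NS} (as in \cite{LW, LWW}) is essential to absorb the interaction terms between the $x_2$-dependent perturbation and the planar ansatz, and to ensure all constants are independent of $\varepsilon$.

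The main obstacle is to keep the constants uniform in $\varepsilon$ in a regime where both the damping coefficient $\partial_{x_1}\bar u_1$ and the ansatz residuals decay like $(1+t)^{-1}$ near $t = h$, which forces a logarithmic loss in time integrals; this is precisely why the hyperbolic wave is indispensable, as it is what makes the residual time-integrable against the energy at all. Once the uniform bounds $\|(\phi, \boldsymbol{\psi})\|_{H^1}^2 + \|\nabla \boldsymbol{\psi}\|_{H^1}^2 \lesssim \varepsilon|\ln\varepsilon|^2$ are established on $[h, T]$, the stated rate $\varepsilon^{1/6}|\ln\varepsilon|$ in \eqref{AB} follows by combining the $2$D Gagliardo--Nirenberg interpolation $\|f\|_{L^\infty(\bbr\times\bbt)} \lesssim \|f\|_{L^2}^{1/3}\|f\|_{H^2}^{2/3}$ with the standard $L^\infty$ bound on $\|(\bar\rho, \bar u_1) - (\rho^r, u_1^r)\|_{L^\infty}$ away from the origin. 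Local well-posedness of \eqref{NS} combined with these uniform a priori estimates furnishes the smooth solution on $[0, T]$, and the pointwise and a.e.\ convergence as $\varepsilon \to 0$ follows directly from the $L^\infty$ rate away from $(0,0)$.
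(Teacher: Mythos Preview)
Your outline captures the essential architecture (smoothed rarefaction profile, hyperbolic wave correction, perturbative energy estimates), but the mechanism by which the rate $\varepsilon^{1/6}|\ln\varepsilon|$ emerges is misidentified, and this conceals a genuine gap. In the paper the smoothing of the rarefaction profile is done with an \emph{$\varepsilon$-dependent} width parameter $\delta$ (the initial datum for Burgers is $\tanh(x_1/\delta)$), and the final rate comes from balancing two competing errors: the energy estimate on the perturbation yields $\|(\phi,\Psi)\|_{H^2}\lesssim \varepsilon^{1/2}/\delta^2$ (after the hyperbolic scaling $\tau=t/\varepsilon$, $y=x/\varepsilon$, which normalizes the viscosity to $O(1)$ and is what makes the uniform $H^2$ closure possible), while Lemma~\ref{lemma2.2}(3) gives $\|(\bar\rho,\bar u_1)-(\rho^r,u_1^r)\|_{L^\infty}\lesssim \delta t^{-1}|\ln\delta|$. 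Choosing $\delta=\varepsilon^{1/6}$ equalizes these and produces \eqref{AB}. Your profile, by contrast, has decay $\|\partial_{x_1}(\bar\rho,\bar u_1)\|_{L^p}\lesssim (1+t)^{-1+1/p}$ with an $\varepsilon$-independent smoothing; then $\|(\bar\rho,\bar u_1)-(\rho^r,u_1^r)\|_{L^\infty}$ at a fixed time $t\geq h$ is simply $O(1)$ in $\varepsilon$ and contributes nothing to the vanishing viscosity limit.

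Separately, your Gagliardo--Nirenberg step does not give what you claim: if $\|(\phi,\boldsymbol{\psi})\|_{H^2}^2\lesssim \varepsilon|\ln\varepsilon|^2$ as you write, then already the Sobolev embedding $H^2\hookrightarrow L^\infty$ gives $\|(\phi,\boldsymbol{\psi})\|_{L^\infty}\lesssim \varepsilon^{1/2}|\ln\varepsilon|$, and interpolation against the $L^2$ norm (which is of the same size) cannot improve this to $\varepsilon^{1/6}$. The exponent $1/6$ is not an interpolation artifact; it is forced by the $\delta$-optimization above. You also omit the hyperbolic rescaling $\tau=t/\varepsilon$, $y=x/\varepsilon$: without it the dissipation in your $L^2$ estimate is $\varepsilon\int|\nabla\boldsymbol{\psi}|^2$, which is too weak to absorb the first-order coupling terms uniformly in $\varepsilon$ when you pass to the $\nabla\phi$ estimate; the paper's Lemmas~\ref{lemma4.2}--\ref{lemma4.5} rely essentially on the viscosity being $O(1)$ in the scaled variables.
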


We remark that Theorem \ref{theorem1} gives the first vanishing viscosity result to the planar rarefaction wave with arbitrary strength for the multi-dimensional viscous system \eqref{NS} with physical viscosities, while the corresponding vanishing viscosity limit problems for the planar shock or contact discontinuity case are still completely open as far as we know.
%
To prove Theorem \ref{theorem1}, we first construct a smooth approximate rarefaction wave to the Euler system \eqref{ES} or \eqref{2ES} since the self-similar rarefaction wave fan is only Lipschitz continuous. The next crucial step is to introduce a new wave, called the hyperbolic wave, to recover the physical viscosities for the inviscid approximate rarefaction wave profile. Note that this hyperbolic wave plays an essential role for the vanishing viscosity limit of 2D compressible Navier-Stokes equations   towards the planar rarefaction wave and if we only use the inviscid 1D hyperbolic rarefaction wave profile as the ansatz without the hyperbolic wave  constructed, then $H^2$-norm of the perturbation of the solution to the 2D compressible Navier-Stokes equations  around the planar rarefaction wave is not uniform-in-$\varepsilon$ and consequently we can not justify the vanishing viscosity limit of planar rarefaction wave as in Theorem \ref{theorem1}.  Then the solution to the 2D compressible Navier-Stokes equations \eqref{NS} is sought around the superposition of both the rarefaction wave profile and the hyperbolic wave, and finally the vanishing viscosity limit to the planar rarefaction wave in \eqref{AB} is rigorously justified.
%
Note also that our vanishing viscosity analysis could also be applied to the vanishing viscosity limit to the superposition of 1-rarefaction wave and 2-rarefaction wave for the two-dimensional compressible Navier-Stokes equations \eqref{NS} provided we consider the wave interaction estimates additionally. 
We finally remark that the corresponding vanishing viscosity limit of the compressible Navier-Stokes equations \eqref{NS} to the planar rarefaction wave in the spatial three-dimensional case is still open and will be studied in our future investigation. 

The rest of the paper is organized as follows. In Section 2, we first construct the approximate rarefaction wave to the Euler system \eqref{ES} or \eqref{2ES} and then introduce the hyperbolic wave to recover the physical viscosities to the inviscid smooth approximate rarefaction wave. In Section 3, we reformulate the system as the perturbation of the solution to 2D compressible Navier-Stokes equations \eqref{NS} around the solution profile consisting of both the approximate rarefaction wave and the hyperbolic wave and then based on the a priori estimates, we prove our main Theorem \ref{theorem1}. Finally, in Section 4, we prove the a priori estimates for the perturbation system by using an elementary $L^2$ energy method.

Before concluding this introduction, we present some notations that will be used in this paper. We use $H^k(\bbr \times \bbt)$ and $H^k(\bbr \times \bbt_\varepsilon)(k \geq 0, k\in \mathbb{Z})$ to denote the usual Sobolev space with the norm $\|\cdot\|_k$, where $\bbt_\varepsilon:=\bbr/\frac1\varepsilon\bbz$ is the scaled torus. We denote $L^2(\bbr \times \bbt) = H^0(\bbr \times \bbt), L^2(\bbr \times \bbt_\varepsilon) = H^0(\bbr \times \bbt_\varepsilon)$ and   set $\|\cdot\| = \|\cdot\|_0$. For simplicity, we also write $C$ as generic positive constants which are independent of $\varepsilon, \delta$ and $T$, and $C_T$ as positive constants which are independent of $\varepsilon$ and $\delta$, but may depend on $T$.

\bigskip
%
%
\section{Construction of the Solution Profile}
\setcounter{equation}{0}

In this section we  construct the approximate rarefaction wave to the Euler system \eqref{ES} or \eqref{2ES} and   introduce the hyperbolic wave to recover the  physical viscosities to the inviscid smooth approximate rarefaction wave.

\subsection{Smooth Approximate Rarefaction Wave}

Since the rarefaction wave is only Lipschitz continuous, we will construct a smooth approximation rarefaction wave through the Burgers' equation as in \cite{X-1, H-L, H-L-W}.
Consider the Riemann problem for the inviscid Burgers' equation:
\begin{equation} \label{BE}
\begin{cases}
\displaystyle w_t + ww_{x_1} = 0, \\
\displaystyle w(0, x_1) = w_0^r(x_1) = \begin{cases}
w_-,  \quad x_1 < 0,\\
w_+,  \quad x_1 > 0.
\end{cases}
\end{cases}
\end{equation}
If $w_- < w_+$, then \eqref{BE} has the self-similar rarefaction wave fan $w^r(t, x_1) = w^r(x_1/t)$ given by
\begin{equation} \label{BES}
w^r(t, x_1) = w^r(\frac {x_1}{t}) = \begin{cases}
w_- , \qquad x_1 < w_-t, \\
\frac{x_1}{t}, \qquad w_-t \leq x_1 \leq w_+t, \\
w_+ , \qquad x_1 > w_+t.
\end{cases}
\end{equation}
As in \cite{H-L-W}, the approximate rarefaction wave to the   Navier-Stokes equations \eqref{NS} can be constructed using the smooth solution of the Burgers' equation:
\begin{equation} \label{ABE}
\begin{cases}
\displaystyle w_t + ww_{x_1} = 0, \\
\displaystyle w(0, x_1) = w_0(x_1) = \frac{w_+ + w_-}{2} + \frac{w_+ - w_-}{2} \tanh \frac{x_1}{\delta},
\end{cases}
\end{equation}
where $\delta>0$ is a small constant depending on the viscosity parameter $\varepsilon$. In fact, we take $\delta=\varepsilon^{\frac16}$ in the present paper.
The following properties can be proved by the characteristic method, see \cite{X-1, H-L-W}.
\begin{lemma} \label{lemma2.1}
  Suppose $w_+ > w_-$ and set $\tilde{w} = w_+ - w_-$. Then the problem \eqref{ABE} has a unique smooth global solution $w(t, x_1)$ such that
   
  (1)~$w_- < w(t, x_1) < w_+, ~w_{x_1} >0$ for $x_1 \in \bbr$ and $t \geq 0, \delta > 0$.
   
  (2)~The following estimates hold for all $t > 0, \delta > 0$ and $p \in [1, + \infty]$:
  \begin{align*}
    &\|w_{x_1}(t, \cdot)\|_{L^p(\mathbb{R})} \leq C \tilde{w}^{1/p} (\delta + t)^{-1+1/p}, \\
    &\|w_{x_1x_1}(t, \cdot)\|_{L^p(\mathbb{R})} \leq C (\delta + t)^{-1} \delta^{-1+1/p}, \\
    &\|w_{x_1x_1x_1}(t, \cdot)\|_{L^p(\mathbb{R})} \leq C (\delta + t)^{-1} \delta^{-2+1/p}, \\
    &|w_{x_1x_1}(t, x_1)| \leq \frac 4\delta w_{x_1}(t, x_1).
  \end{align*}
  
  (3)~There exists a constant $\delta_0 \in (0, 1)$ such that for $\delta \in (0, \delta_0] $ and $t > 0$,
  \[
   \|w(t, \cdot) - w^r(\frac{\cdot}{t})\|_{L^\infty(\mathbb{R})} \leq C \delta t^{-1} [\ln(1 + t) + |\ln \delta|].
  \] 
\end{lemma}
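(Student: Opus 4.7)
\textbf{Proof proposal for Lemma~\ref{lemma2.1}.} The plan is to treat the whole lemma by the method of characteristics, since $w_0$ is smooth, bounded, and strictly increasing so that characteristics never intersect and we get a unique global classical solution. Writing $X(t,x_{1,0}) = x_{1,0} + w_0(x_{1,0})\,t$, one has the Jacobian $\partial_{x_{1,0}}X = 1 + t\,w_0'(x_{1,0}) \ge 1$ (because $w_0'>0$, as a simple computation with the $\tanh$-profile shows), so $X(t,\cdot)$ is a smooth diffeomorphism for every $t\ge 0$. Defining $w(t,x_1) := w_0(x_{1,0})$ with $x_1 = X(t,x_{1,0})$ gives the solution, and since $w_-<w_0<w_+$ and $w_0'>0$ are preserved along characteristics, claim (1) follows immediately.

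For claim (2), the plan is to differentiate the implicit relation $w(t,X(t,x_{1,0})) = w_0(x_{1,0})$ to obtain
\[
w_{x_1} = \frac{w_0'(x_{1,0})}{1+t\,w_0'(x_{1,0})},
\qquad
w_{x_1 x_1} = \frac{w_0''(x_{1,0})}{(1+t\,w_0'(x_{1,0}))^3},
\]
with a similar explicit formula for $w_{x_1 x_1 x_1}$. The pointwise bound $|w_{x_1x_1}|\le 4\delta^{-1}w_{x_1}$ then drops out of the algebraic identity $w_{x_1x_1}/w_{x_1} = w_0''/\bigl[w_0'(1+tw_0')^2\bigr]$ together with the elementary estimate $|w_0''/w_0'| = (2/\delta)|\tanh(x_{1,0}/\delta)| \le 2/\delta$ from the explicit $\tanh$-form of $w_0$. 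For the $L^p$ norms, the two extreme exponents are handled directly: $L^\infty$ bounds come from maximizing the algebraic expressions in $w_0'(x_{1,0})\in(0,\tilde w/(2\delta))$ over $x_{1,0}$, giving a $(\delta+t)^{-1}$ factor; while $L^1$ bounds use the change of variables $x_1\mapsto x_{1,0}$ (Jacobian $1+tw_0'$), which converts the integrals into quantities like $\int w_0'\,dx_{1,0} = \tilde w$ or, after one cancellation, the integral $\int w_0''(1+tw_0')^{-2}\,dx_{1,0}$ which can be computed in closed form via $s=\tanh(x_{1,0}/\delta)$. Intermediate $p$ follow by log-convex interpolation, matching the stated exponents $\tilde w^{1/p}(\delta+t)^{-1+1/p}$ and $(\delta+t)^{-1}\delta^{-1+1/p}$, and the same method with the (more involved) formula for $w_{x_1x_1x_1}$ yields the third estimate.

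For claim (3), the plan is to split $\mathbb R$ into three regions tied to the Riemann fan: the outer regions $x_1 < w_- t$, $x_1>w_+ t$, and the interior $w_- t \le x_1 \le w_+ t$. In the interior, $w^r(x_1/t)=x_1/t$ and the characteristic identity gives $w(t,x_1)-w^r(x_1/t) = w_0(x_{1,0})-x_1/t = -x_{1,0}/t$; it then suffices to bound $|x_{1,0}|$ uniformly over the interior, and the worst case occurs near the endpoints where the $\tanh$-asymptotics $w_+-w_0(x_{1,0}) \sim \tilde w\,e^{-2x_{1,0}/\delta}$ give $|x_{1,0}| \lesssim \delta\bigl(|\ln\delta| + \ln(1+t)\bigr)$, producing the stated rate. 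In the outer regions one compares with the constant states $w_\pm$, using exponential decay of $w_0-w_\pm$ away from the transition layer, which contributes only lower-order terms.

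The main obstacle is the convergence rate in (3): the logarithmic factor $\ln(1+t) + |\ln\delta|$ must be tracked with care through the change of variables, since both the edges of the smooth transition layer of $w_0$ and the corners of the rarefaction fan contribute, and the estimate has to be uniform in $t>0$ (in particular blow up at $t\to 0^+$ is permitted but must have the correct $1/t$ rate). The derivative estimates of part (2), and in particular the pointwise inequality $|w_{x_1x_1}|\le 4\delta^{-1}w_{x_1}$, are essentially one-line consequences of the characteristic formulas, so the real work is in the careful region-by-region analysis needed for (3).
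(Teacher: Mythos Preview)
Your approach is correct and matches the paper's: the paper does not actually prove this lemma but simply remarks that ``the following properties can be proved by the characteristic method'' and refers to \cite{X-1, H-L-W}, which carry out essentially the program you outline (implicit characteristic formulas for $w_{x_1}$, $w_{x_1x_1}$, $w_{x_1x_1x_1}$, the identity $w_0''/w_0' = -(2/\delta)\tanh(x_{1,0}/\delta)$, $L^1$--$L^\infty$ endpoint estimates with interpolation, and a region-by-region comparison with the fan for part~(3)). Your sketch is faithful to that standard argument and there is nothing to correct.
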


We now consider the approximate rarefaction wave for the Euler system \eqref{ES}-\eqref{ini}. From now on,  the constant states $(\rho_\pm, u_{1\pm})$ are fixed  and connected by the 2-rarefaction wave. Set $w_\pm = \lambda_2(\rho_\pm, u_{1\pm})$. 
In fact,  the 2-rarefaction wave $(\rho^r, u_1^r)(t, x_1) = (\rho^r, u_1^r)(x_1/t)$ to the Riemann problem \eqref{ES} - \eqref{RC} is given explicitly by
\begin{align*} 
  &\lambda_2(\rho^r, u_1^r)(t, x_1) = w^r(t, x_1), \\
  &z_2(\rho^r, u_1^r)(t, x_1) = z_2 (\rho_\pm, u_{1\pm}),
\end{align*}
where $z_2(\rho, u_1)$ is the 2-Riemann invariant defined in \eqref{ri}.
The corresponding  smooth approximate rarefaction wave $(\bar{\rho}, \bar{u}_1)(t, x_1)$ of the 2-rarefaction wave fan $(\rho^r, u_1^r)(\frac{x_1}t)$ can be constructed by
\begin{align}
  \begin{aligned} \label{AR}
    &\lambda_2(\bar{\rho}, \bar{u}_1)(t, x_1) = w(1+t, x_1), \\
    &z_2(\bar{\rho}, \bar{u}_1)(t, x_1) = z_2 (\rho_\pm, u_{1\pm}),
  \end{aligned}
\end{align}
where $w(t, x_1)$ is the smooth solution to the Burgers' equation in \eqref{ABE}.
It is easy to see that the above approximate rarefaction wave $(\bar{\rho}, \bar{u}_1)$ satisfies the following system:
\begin{equation}  \label{ANS}
  \begin{cases}
    \displaystyle \bar{\rho}_t + (\bar{\rho}\bar{u}_1)_{x_1} = 0, \\
    \displaystyle (\bar{\rho} \bar{u}_1)_t + (\bar{\rho} \bar{u}_1^2 + p(\bar{\rho}))_{x_1} = 0, \\
    (\bar{\rho}, \bar{u}_1)(0, x_1) := (\bar{\rho}_0, \bar{u}_{10})(x_1).
  \end{cases}
\end{equation}
The following lemma follows from Lemma \ref{lemma2.1} (cf. \cite{H-L-W}).
\begin{lemma} \label{lemma2.2}
	The smooth approximate 2-rarefaction wave $(\bar{\rho}, \bar{u}_1)$ defined in \eqref{AR} satisfies the following properties:
	
	(1)~$\bar{u}_{1x_1} = \frac{2}{\gamma + 1}w_{x_1} > 0$ for all $x_1 \in \bbr$ and $t \geq 0, \bar{\rho}_{x_1} = \bar{\rho}^{\frac{3 - \gamma}{2}} \bar{u}_{1x_1}>0$, and 
	$$\bar{\rho}_{x_1x_1} = \bar{\rho}^{\frac{3 - \gamma}{2}} \bar{u}_{1x_1x_1} + \frac{3-\gamma}{2}\bar{\rho}^{2-\gamma} (\bar{u}_{1x_1})^2.$$
	
	(2)~The following estimates hold for all $t \geq 0, \delta > 0$ and $p \in [1, + \infty]$:
	\begin{align*}
	&\|(\bar{\rho}_{x_1}, \bar{u}_{1x_1})\|_{L^p(\mathbb{R})} \leq C \tilde{w}^{1/p} (\delta + t)^{-1+1/p}, \\
	&\|(\bar{\rho}_{x_1x_1}, \bar{u}_{1x_1x_1})\|_{L^p(\mathbb{R})} \leq C (\delta + t)^{-1} \delta^{-1+1/p}, \\
	&\|(\bar{\rho}_{x_1x_1x_1}, \bar{u}_{1x_1x_1x_1})\|_{L^p(\mathbb{R})} \leq C (\delta + t)^{-1} \delta^{-2+1/p}.
	\end{align*}
	
	(3)~There exists a constant $\delta_0 \in (0, 1)$ such that for $\delta \in (0, \delta_0]$ and $t > 0$,
	\[
	\|(\bar{\rho}, \bar{u}_1)(t, \cdot) - (\rho^r, u_1^r)(\frac{\cdot}{t})\|_{L^\infty(\mathbb{R})} \leq C \delta t^{-1} [\ln(1 + t) + |\ln \delta|].
	\]
\end{lemma}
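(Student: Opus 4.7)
The proof is essentially a transfer of the properties for the Burgers profile $w(1+t,x_1)$ in Lemma \ref{lemma2.1} across the change of variables implicit in \eqref{AR}. The plan is to first derive algebraic/pointwise identities relating $(\bar\rho,\bar u_1)$ to $w$ (Part 1), then read off $L^p$ bounds from those identities (Part 2), and finally reduce the $L^\infty$ comparison with the Riemann fan to the scalar Burgers comparison (Part 3). No PDE analysis is needed beyond what is already in Lemma \ref{lemma2.1}; everything reduces to the rarefaction-curve geometry.

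For Part 1, the key observation is that $p(\rho)=\rho^\gamma/\gamma$ gives $\lambda_2(\rho,u_1)=u_1+\rho^{(\gamma-1)/2}$ and $z_2(\rho,u_1)=u_1-\frac{2}{\gamma-1}\rho^{(\gamma-1)/2}$. I would differentiate the Riemann-invariant identity $z_2(\bar\rho,\bar u_1)=\text{const}$ in $x_1$ to obtain the constitutive relation
\[
\bar u_{1x_1}=\bar\rho^{(\gamma-3)/2}\bar\rho_{x_1},\qquad\text{i.e.,}\qquad \bar\rho_{x_1}=\bar\rho^{(3-\gamma)/2}\bar u_{1x_1},
\]
then differentiate $\lambda_2(\bar\rho,\bar u_1)(t,x_1)=w(1+t,x_1)$ in $x_1$ to get $\bar u_{1x_1}+\tfrac{\gamma-1}{2}\bar\rho^{(\gamma-3)/2}\bar\rho_{x_1}=w_{x_1}$; substituting the first identity into the second yields $\tfrac{\gamma+1}{2}\bar u_{1x_1}=w_{x_1}$, which is exactly $\bar u_{1x_1}=\tfrac{2}{\gamma+1}w_{x_1}$. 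Positivity of both $\bar u_{1x_1}$ and $\bar\rho_{x_1}$ is then immediate from $w_{x_1}>0$ in Lemma \ref{lemma2.1}. The formula for $\bar\rho_{x_1x_1}$ follows by differentiating the product $\bar\rho^{(3-\gamma)/2}\bar u_{1x_1}$ once more and using the relation for $\bar\rho_{x_1}$.

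For Part 2, since both $z_2(\bar\rho,\bar u_1)$ and $\lambda_2(\bar\rho,\bar u_1)$ are bounded and bounded away from degeneracies, $\bar\rho$ is itself bounded between two positive constants depending only on $\rho_\pm$. Hence all powers $\bar\rho^\alpha$ appearing in the formulas of Part 1 are harmless multiplicative factors, and the linear estimates on $\|\bar u_{1x_1}\|_{L^p}$ and $\|\bar\rho_{x_1}\|_{L^p}$ follow directly from the corresponding bounds on $\|w_{x_1}\|_{L^p}$. The second and third derivative estimates require differentiating the identities of Part 1 further; these produce, besides terms proportional to $w_{x_1x_1}$ and $w_{x_1x_1x_1}$, quadratic terms like $\bar\rho^{2-\gamma}(\bar u_{1x_1})^2$ and cubic terms involving $(\bar u_{1x_1})^3$ or $\bar u_{1x_1}\bar u_{1x_1x_1}$. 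The small point to verify is that these lower-order products do not dominate: for instance $\|(w_{x_1})^2\|_{L^p}=\|w_{x_1}\|_{L^{2p}}^2\le C\tilde w^{1/p}(\delta+t)^{-2+1/p}$, which is bounded by $C(\delta+t)^{-1}\delta^{-1+1/p}$ because $(\delta+t)^{-1+1/p}\le \delta^{-1+1/p}$ for $p\ge 1$. Analogous manipulations absorb the cubic terms into the third-derivative bound.

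For Part 3, the key structural fact is that both $(\bar\rho,\bar u_1)(t,\cdot)$ and $(\rho^r,u_1^r)(\cdot/t)$ lie on the \emph{same} 2-rarefaction curve in $(\rho,u_1)$-state space (they share the same value of $z_2$) and are parametrized by $\lambda_2$. The map $\lambda_2\mapsto(\rho,u_1)$ along this curve is smooth with bounded derivative on the compact interval $[\lambda_2(\rho_-,u_{1-}),\lambda_2(\rho_+,u_{1+})]$, so
\[
|(\bar\rho,\bar u_1)(t,x_1)-(\rho^r,u_1^r)(x_1/t)|\le C\bigl|w(1+t,x_1)-w^r(x_1/t)\bigr|.
\]
Applying Lemma \ref{lemma2.1}(3) at time $1+t$ gives $\|w(1+t,\cdot)-w^r(\cdot/(1+t))\|_\infty\le C\delta(1+t)^{-1}[\ln(2+t)+|\ln\delta|]$, and a direct estimate using $w^r(\xi)$ Lipschitz (away from corners, handled by the constancy outside the fan and the explicit $w^r(\xi)=\xi$ inside) controls $\|w^r(\cdot/(1+t))-w^r(\cdot/t)\|_\infty$ by $C/(1+t)$; combining and absorbing factors into the stated form yields the claimed bound. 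The main technical nuisance, and the only place that needs care, is this time-shift comparison between $w^r(x_1/t)$ and $w^r(x_1/(1+t))$ for small $t$; away from $t=0$ it is standard, and near $t=0$ the factor $t^{-1}$ on the right-hand side of the desired estimate provides the necessary slack.
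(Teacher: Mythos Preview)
Your approach is exactly what the paper intends: the authors give no argument beyond ``follows from Lemma~\ref{lemma2.1} (cf.\ [H-L-W]),'' and your transfer via the rarefaction-curve parametrization is the standard route. Parts (1) and (2) are carried out correctly; the algebraic identities are right, and your check that the quadratic/cubic lower-order terms in the higher derivatives are dominated by the leading $w_{x_1x_1}$, $w_{x_1x_1x_1}$ bounds (using $(\delta+t)^{-1+1/p}\le\delta^{-1+1/p}$) is the right absorption.

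There is, however, a genuine gap in your Part (3). Your decomposition routes through $w^r(\cdot/(1+t))$ and then bounds $\|w^r(\cdot/(1+t))-w^r(\cdot/t)\|_{L^\infty}$ by $C/(1+t)$. That bound is correct, but it carries \emph{no factor of $\delta$}, and therefore cannot be ``absorbed into the stated form'' $C\delta t^{-1}[\ln(1+t)+|\ln\delta|]$: for any fixed $t>0$ the time-shift error is $O((1+t)^{-1})$ uniformly as $\delta\to 0$, while the right-hand side of the claimed inequality tends to $0$. A concrete check: with $w_-=0$, $w_+=1$, at $\xi=0$ one has $w_0(0)=\tfrac12$ and the corresponding characteristic reaches $x_1=(1+t)/2$; there $w(1+t,x_1)-w^r(x_1/t)=\tfrac12-(1+t)/(2t)=-1/(2t)$, independent of $\delta$. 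So the ``$t^{-1}$ provides slack'' remark only helps for small $t$ relative to $\delta|\ln\delta|$, not for $t$ of order one.

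This wrinkle stems from the time shift $w(1+t,\cdot)$ built into the definition \eqref{AR}. If one works instead with $\lambda_2(\bar\rho,\bar u_1)(t,x_1)=w(t,x_1)$ (as in several of the cited references), Part~(3) follows immediately from Lemma~\ref{lemma2.1}(3) with no shift to handle, and Parts~(1)--(2) are unchanged since Lemma~\ref{lemma2.1}(2) already covers $t\ge 0$. Either reconcile the definition with the stated bound along these lines, or replace the triangle-inequality route by a direct characteristic computation that tracks the shift; your current argument does not close as written.
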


\subsection{Hyperbolic Wave} 

If we only choose the approximate rarefaction wave $(\bar{\rho}, \bar{u}_1)(t, x_1)$ as the approximate wave profile, the error terms arising from  the viscous terms in the approximate rarefaction wave are not good enough for obtaining the desired uniform estimates with respect to the viscosities. 
Thus we introduce the hyperbolic wave to recover the physical viscosities for the inviscid approximate rarefaction wave profile, which   a crucial   in our  analysis of vanishing viscosity limit and  partially motivated by  \cite{HWY-2}. 
We now provide a detailed description of this hyperbolic wave. Let the hyperbolic wave $(d_1, d_2)(t, x_1)$ satisfy the linear hyperbolic system
\begin{equation}  \label{HW}
\begin{cases}
\displaystyle d_{1t} + d_{2x_1} = 0, \\
\displaystyle d_{2t} + \left(-\frac{\bar{m}_1^2}{\bar{\rho}^2}d_1 + p'(\bar{\rho})d_1 + \frac{2\bar{m}_1}{\bar{\rho}}d_2\right)_{x_1} = (2\mu_1 + \lam_1)\bar{u}_{1x_1x_1} = (2\mu + \lam)\varepsilon\bar{u}_{1x_1x_1}, \\
(d_1, d_2)(0, x_1) = (0, 0),
\end{cases}
\end{equation}
where $\bar{m}_1 := \bar{\rho}\bar{u}_1$ represents the momentum of the approximate rarefaction wave. 
We shall solve this linear hyperbolic system \eqref{HW} on the fixed time interval $[0, T]$. 
We first  diagonalize the above system. Rewrite the system \eqref{HW} as
\[
\left[
\begin{array}{c}
\displaystyle d_1\\
\displaystyle d_2
\end{array}
\right]_t +
\left(
\bar{A}
\left[
\begin{array}{c}
\displaystyle d_1\\
\displaystyle d_2
\end{array}
\right]
\right)_{x_1} =
\left[
\begin{array}{c}
0\\
\displaystyle (2\mu + \lam)\varepsilon \bar{u}_{1x_1x_1}
\end{array}
\right],
\]
where
\[
\bar{A} =
\left[
\begin{array}{lc}
0 & 1\\
-\frac{\bar{m}_1^2}{\bar{\rho}^2} + p'(\bar{\rho}) & \frac{2\bar{m}_1}{\bar{\rho}}
\end{array}
\right]
\]
with two distinct eigenvalues $\bar{\lam}_1(\bar{\rho}, \bar{m}_1) = \frac{\bar{m}_1}{\bar{\rho}} - \sqrt{p'(\bar{\rho})}, \; \bar{\lam}_2(\bar{\rho}, \bar{m}_1) = \frac{\bar{m}_1}{\bar{\rho}} + \sqrt{p'(\bar{\rho})}$ and the corresponding left and right eigenvectors $\bar{l}_i, \bar{r}_i(i = 1, 2)$. 
For example, we can choose $\bar{l}_i(\bar{\rho}, \bar{m}_1) = \left(\frac{\sqrt{2}}{2}(\frac{-\bar{m}_1}{\bar{\rho}} + (-1)^i\sqrt{p'(\bar{\rho})}), \frac{\sqrt{2}}{2}\right), ~~
\bar{r}_i(\bar{\rho}, \bar{m}_1) = \left(\frac{(-1)^i\sqrt{2}}{2\sqrt{p'(\bar{\rho})}}, \frac{(-1)^i\sqrt{2}}{2\sqrt{p'(\bar{\rho})}}(\frac{\bar{m}_1}{\bar{\rho}} + (-1)^i\sqrt{p'(\bar{\rho})})\right)^\top$ satisfying
\[
\bar{L}\bar{A}\bar{R} = diag(\bar{\lam}_1, \bar{\lam}_2) := \bar{\Lambda}, \quad \bar{L}\bar{R} = I, 
\]
where $\bar{L} = (\bar{l}_1, \bar{l}_2)^\top, \bar{R} = (\bar{r}_1, \bar{r}_2)$ and $I$ is the $2 \times 2$ identity matrix. Now we set
\[
(D_1, D_2)^\top = \bar{L}(d_1, d_2)^\top,
\]
then
\[
(d_1, d_2)^\top = \bar{R}(D_1, D_2)^\top,
\]
and $(D_1, D_2)$ satisfies the diagolized system
\begin{equation}\label{DHW}
\left[
\begin{array}{c}
\displaystyle D_1\\
\displaystyle D_2
\end{array}
\right]_t +
\left(
\bar{\Lambda}
\left[
\begin{array}{c}
\displaystyle D_1\\
\displaystyle D_2
\end{array}
\right]
\right)_{x_1} = \bar{L}_t\bar{R}
\left[
\begin{array}{c}
\displaystyle D_1\\
\displaystyle D_2
\end{array}
\right] + \bar{L}_{x_1}\bar{A}\bar{R}
\left[
\begin{array}{c}
\displaystyle D_1\\
\displaystyle D_2
\end{array}
\right]+ \bar{L}
\left[
\begin{array}{c}
0\\
\displaystyle (2\mu + \lam)\varepsilon \bar{u}_{1x_1x_1}
\end{array}
\right].
\end{equation}
Since the 2-Riemann invariant is constant along the approximate 2-rarefaction wave curve, we have  
\begin{equation}\label{sc}
\bar{L}_t = -\bar{\lam}_2 \bar{L}_{x_1},
\end{equation}
which is a crucial structure to solve the linear hyperbolic system \eqref{DHW} in the interval $[0,T]$, otherwise, it does not seem obvious to solve easily the strongly coupled hyperbolic system \eqref{DHW} on the bounded domain  $[0,T]$.
Substituting the structure relation \eqref{sc} into \eqref{DHW}, we obtain the diagonalized system
\begin{equation}  \label{DHWS}
\begin{cases}
\displaystyle D_{1t} + (\bar{\lam}_1D_1)_{x_1} = \frac{\sqrt{2}}{2}(2\mu + \lam)\varepsilon \bar{u}_{1x_1x_1} + (a_{11}(\bar{\rho})\bar{\rho}_{x_1} + a_{12}(\bar{\rho})\bar{u}_{1x_1})D_1, \\[3mm]
\displaystyle D_{2t} + (\bar{\lam}_2D_2)_{x_1} = \frac{\sqrt{2}}{2}(2\mu + \lam)\varepsilon \bar{u}_{1x_1x_1} + (a_{21}(\bar{\rho})\bar{\rho}_{x_1} + a_{22}(\bar{\rho})\bar{u}_{1x_1})D_1, \\
(D_1, D_2)(0, x_1) = (0, 0).
\end{cases}
\end{equation}
In the diagonalized system \eqref{DHWS}, the equation of $D_1$ is decoupled with $D_2$ due to the rarefaction wave structure of the system as in \eqref{sc}. Therefore, we can solve $D_1$ first and then $D_2$ in \eqref{DHWS} by the standard characteristic method.  Furthermore, we have the following important estimates for the hyperbolic wave $(d_1, d_2)$:
\begin{lemma} \label{lemma2.3}
	There exists a positive constant $C_T$ independent of $\delta$ and $\varepsilon$, such that
	\[
	\|\frac{\partial^k}{\partial x_1^k}(d_1,d_2)(t, \cdot)\|_{L^2(\mathbb{R})}^2 \leq C_T (\frac{\varepsilon}{\delta^{k + 1}})^2,  \quad k=0,1,2,3.
	\]
	In particular, it holds that
	$$
	\sup_{t\in[0,T]}\|(d_1,d_2)(t, \cdot)\|_{L^\infty(\mathbb{R})}=O(\frac{\varepsilon}{\delta^{\frac 32}}).
	$$
	
\end{lemma}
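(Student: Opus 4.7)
My plan is to work with the diagonalized system \eqref{DHWS} and exploit the fact that the structural identity \eqref{sc} already removes $D_2$ from the equation for $D_1$. Since $(D_1,D_2)(0,\cdot)=0$, this allows us to first solve the (closed) $D_1$-equation and then treat $D_2$ as a linear transport equation forced by $D_1$ together with the viscous source $\varepsilon\bar u_{1x_1x_1}$.

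For the $L^2$-estimate of $D_1$, I multiply the first equation of \eqref{DHWS} by $D_1$ and integrate over $x_1$. The transport term yields $\tfrac12\int \bar\lambda_{1x_1} D_1^2\,dx_1$ after integration by parts. A direct computation using the 2-Riemann invariant relation $\bar u_{1x_1}=\tfrac{\sqrt{p'(\bar\rho)}}{\bar\rho}\bar\rho_{x_1}$ shows that
$$\tfrac12\, \bar\lambda_{1x_1} - \big(a_{11}(\bar\rho)\bar\rho_{x_1}+a_{12}(\bar\rho)\bar u_{1x_1}\big) \;\ge\; c_\gamma\,\bar\rho_{x_1}$$
for a positive constant $c_\gamma$ depending only on $\gamma$. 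Hence the perturbation term is absorbed on the left-hand side as a nonnegative weighted dissipation $\int \bar\rho_{x_1} D_1^2$. Applying Cauchy--Schwarz to the viscous source and using the bound $\|\bar u_{1x_1x_1}\|_{L^2}\le C(\delta+t)^{-1}\delta^{-1/2}$ from Lemma \ref{lemma2.2}, followed by a Gr\"onwall step with $T$-dependent but $\delta,\varepsilon$-independent constants, yields
$$\|D_1(t)\|^2 + \int_0^t\!\!\int\bar\rho_{x_1}D_1^2\,dx_1\,ds \;\le\; C_T\,\varepsilon^2\int_0^t(\delta+s)^{-2}\delta^{-1}\,ds \;\le\; C_T\,\frac{\varepsilon^2}{\delta^2}.$$
The same procedure, applied to the $D_2$-equation, produces the good dissipative term $\int\bar\lambda_{2x_1}D_2^2$; the coupling term $\int(a_{21}\bar\rho_{x_1}+a_{22}\bar u_{1x_1})D_1 D_2$ is handled by a weighted Young's inequality against $\int\bar\lambda_{2x_1}D_2^2$, which leaves only a multiple of $\int\bar\rho_{x_1}D_1^2$ already controlled in the previous step, giving $\|D_2(t)\|^2\le C_T\varepsilon^2/\delta^2$.

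For the derivative bounds with $k=1,2,3$, I differentiate \eqref{DHWS} $k$ times in $x_1$ and iterate the energy argument on $\partial^k_{x_1}(D_1,D_2)$. Commutators between $\partial^k_{x_1}$ and the coefficient matrix $\bar A$ (or $\bar\Lambda$, $\bar L$, $\bar R$) involve up to $k$ spatial derivatives of $(\bar\rho,\bar u_1)$, and each such derivative contributes a factor $\delta^{-1}$ through the $L^\infty$ estimates of Lemma \ref{lemma2.2}; this accounts for the $\delta^{-(k+1)}$ scaling. The forcing $\varepsilon\partial^k_{x_1}\bar u_{1x_1x_1}$ is handled by Lemma \ref{lemma2.2} directly for $k\le 1$, and for $k=2,3$ either by a short extension of Lemma \ref{lemma2.1} using the characteristic ODE for the smoothed Burgers profile (which inductively gives $\|\partial^{k}_{x_1}w\|_{L^p}\le C(\delta+t)^{-1}\delta^{-k+1+1/p}$) or by integrating one derivative by parts onto $\partial^k_{x_1} D_j$ and using the structural sign to absorb it. The $L^\infty$ assertion then follows from $(d_1,d_2)=\bar R(D_1,D_2)^\top$ with $\bar R$ uniformly bounded, together with the 1D Gagliardo--Nirenberg inequality $\|f\|_{L^\infty}\le C\|f\|^{1/2}\|\partial_{x_1}f\|^{1/2}$ and the $k=0,1$ bounds, which give $\|(d_1,d_2)\|_{L^\infty}\le C\big((\varepsilon/\delta)(\varepsilon/\delta^2)\big)^{1/2}=C\varepsilon/\delta^{3/2}$.

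The main obstacle, in my view, is avoiding a $\delta^{-C}$ Gr\"onwall factor. Bounding the perturbation coefficient naively by $\|\bar u_{1x_1}\|_{L^\infty}\le C(\delta+t)^{-1}$ produces a time integral of size $C|\ln\delta|$, which after exponentiation gives $\delta^{-C}$ and destroys the claimed $\delta$-scaling. The pointwise cancellation $\tfrac12\bar\lambda_{1x_1}\ge a_{11}\bar\rho_{x_1}+a_{12}\bar u_{1x_1}$ coming from \eqref{sc} (and its analogue at every differentiated order, together with the sign of $\bar\lambda_{2x_1}$) is what makes the Gr\"onwall multiplier $\delta$-independent; carrying this structure through the commutators at each $k$ is the core bookkeeping of the proof.
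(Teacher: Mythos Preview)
Your overall strategy coincides with the paper's: work in the diagonalized variables $(D_1,D_2)$, exploit the decoupling of the right-hand side of \eqref{DHWS} produced by \eqref{sc}, close the $D_1$-estimate first, feed the resulting weighted bound $\int_0^t\!\int\bar u_{1x_1}D_1^2$ into the $D_2$-estimate, and then differentiate for $k=1,2,3$. Your $D_2$ step and the $L^\infty$ conclusion via the one-dimensional Gagliardo--Nirenberg inequality are essentially what the paper does. The genuine difference is in the $D_1$ step. The paper does \emph{not} verify the pointwise sign you claim; instead it multiplies the $D_1$-equation by the weighted test function $\bar\rho^{N}D_1$ with $N$ large. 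The time derivative of the weight, through the continuity equation for $\bar\rho$, produces an extra good term $N\int\bar\rho^{N}\bar u_{1x_1}D_1^2$ on the left, and for $N$ sufficiently large this dominates the zeroth-order contribution $(a_{11}\bar\rho_{x_1}+a_{12}\bar u_{1x_1})D_1^2-\tfrac12\bar\lambda_{1x_1}D_1^2$ on the right \emph{regardless of its sign}. Your route is sharper but relies on a specific cancellation; a direct computation along the 2-rarefaction does give
\[
\tfrac12\,\bar\lambda_{1x_1}-\big(a_{11}(\bar\rho)\bar\rho_{x_1}+a_{12}(\bar\rho)\bar u_{1x_1}\big)=\tfrac{\gamma+5}{4}\,\bar u_{1x_1}>0,
\]
so your claim is correct at order $k=0$. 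The trade-off is robustness: the paper's $\bar\rho^{N}$ multiplier only needs that all zeroth-order coefficients are $O(\bar u_{1x_1})$, which is automatic and carries over verbatim to the differentiated equations (where commutators again contribute $O(\bar u_{1x_1})$ at top order), whereas your phrase ``its analogue at every differentiated order'' requires re-checking the favorable sign after each differentiation, which you have not done and which is not obviously inherited from \eqref{sc}. Both methods avoid the $\delta^{-C}$ Gr\"onwall blow-up you correctly flag as the main danger.
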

\begin{proof}
	Multiplying the second equation of \eqref{DHWS} by $D_2$ and integrating the resulting equation over $[0, t]$ with $t \in (0, T)$ imply
	\begin{align*}
	&\int_{\bbr} \frac{D_2^2}{2}(t, x_1) dx_1 + \int_{0}^{t}\int_{\bbr} \bar{\lam}_{2x_1}\frac{D_2^2}{2} dx_1dt \\
	&= \int_{0}^{t}\int_{\bbr} \bigg[\frac{\sqrt{2}}{2}(2\mu + \lam)\varepsilon \bar{u}_{1x_1x_1}D_2 + (a_{21}(\bar{\rho})\bar{\rho}_{x_1} + a_{22}(\bar{\rho})\bar{u}_{1x_1})D_1D_2\bigg] dx_1dt \\
	&\leq C \int_{0}^{t}\int_{\bbr} D_2^2 dx_1dt + C \varepsilon^2 \int_{0}^{t}\int_{\bbr} \bar{u}_{1x_1x_1}^2 dx_1dt + \beta \int_{0}^{t}\int_{\bbr} \bar{u}_{1x_1}D_2^2 dx_1dt \\
	&\quad + C_\beta \int_{0}^{t}\int_{\bbr} \bar{u}_{1x_1}D_1^2 dx_1dt \\
	&\leq C \int_{0}^{t}\int_{\bbr} D_2^2 dx_1dt + C \varepsilon^2 \int_{0}^{t} \delta^{-1}(\delta + t)^{-2} dt + \beta \int_{0}^{t}\int_{\bbr} \bar{u}_{1x_1}D_2^2 dx_1dt \\
	&\quad + C_\beta \int_{0}^{t}\int_{\bbr} \bar{u}_{1x_1}D_1^2 dx_1dt \\
	&\leq C \int_{0}^{t}\int_{\bbr} D_2^2 dx_1dt + C (\frac{\varepsilon}{\delta})^2 + \beta \int_{0}^{t}\int_{\bbr} \bar{u}_{1x_1}D_2^2 dx_1dt + C_\beta \int_{0}^{t}\int_{\bbr} \bar{u}_{1x_1}D_1^2 dx_1dt.
	\end{align*}
	Choosing $\beta$ suitably small and using Gronwall's inequality give
	\begin{align}
	\begin{aligned} \label{1}
	\int_{\bbr} D_2^2 (t, x_1) dx_1 + \int_{0}^{t}\int_{\bbr} \bar{u}_{1x_1} D_2^2 dx_1dt
	\leq C_T (\frac{\varepsilon}{\delta})^2 + C_T \int_{0}^{t}\int_{\bbr} \bar{u}_{1x_1}D_1^2 dx_1dt.
	\end{aligned}
	\end{align}
	
	Now we   multiply the first equation of \eqref{DHWS} by $\bar{\rho}^ND_1$ with $N$   a sufficiently large positive constant to be determined, and integrate the resulting equation over $[0, t]$ with $t \in (0, T)$ to get
	\begin{align*}
	&\int_{\bbr} \bar{\rho}^N\frac{D_1^2}{2}(t, x_1) dx_1 + \int_{0}^{t}\int_{\bbr} N\bar{\rho}^N\bar{u}_{1x_1} D_1^2 dx_1dt \\
	&= \int_{0}^{t}\int_{\bbr} \bigg[\frac{\sqrt{2}}{2}(2\mu + \lam)\varepsilon \bar{\rho}^N\bar{u}_{1x_1x_1}D_1 + \bar{\rho}^N(a_{11}(\bar{\rho})\bar{\rho}_{x_1} + a_{12}(\bar{\rho})\bar{u}_{1x_1})D_1^2 - \bar{\rho}^N\lam_{1x_1}\frac{D_1^2}{2}\bigg] dx_1dt \\
	&\leq C \int_{0}^{t}\int_{\bbr} \bar{\rho}^ND_1^2 dx_1dt + C \varepsilon^2 \int_{0}^{t}\int_{\bbr} \bar{u}_{1x_1x_1}^2 dx_1dt + C \int_{0}^{t}\int_{\bbr} \bar{\rho}^N\bar{u}_{1x_1}D_1^2 dx_1dt \\
	&\leq C \int_{0}^{t}\int_{\bbr} \bar{\rho}^ND_1^2 dx_1dt + C (\frac{\varepsilon}{\delta})^2 + C \int_{0}^{t}\int_{\bbr} \bar{\rho}^N\bar{u}_{1x_1}D_1^2 dx_1dt.
	\end{align*}
	Choosing $N$ large enough and using Gronwall's inequality give
	\begin{align}
	\begin{aligned} \label{2}
	\int_{\bbr} D_1^2(t, x_1) dx_1 + \int_{0}^{t}\int_{\bbr} \bar{u}_{1x_1}D_1^2 dx_1dt
	\leq C_T (\frac{\varepsilon}{\delta})^2.
	\end{aligned}
	\end{align}
	Combining \eqref{1} and \eqref{2}, we can get
	\begin{align*}
	\int_{\bbr} (D_1^2 + D_2^2)(t, x_1) dx_1 + \int_{0}^{t}\int_{\bbr} \bar{u}_{1x_1}(D_1^2 + D_2^2) dx_1dt
	\leq C_T (\frac{\varepsilon}{\delta})^2.
	\end{align*}
Thus the case $k = 0$  in Lemma \ref{lemma2.3} is proved. The other cases $k = 1, 2, 3$ can be proved similarly by differentiating  the  system $k$ times with respect to $x_1$, and we omit the details. 
\end{proof}

\subsection{Approximate Solution Profile}

The approximate solution profile $(\tilde{\rho}, \tilde{u}_1)$ consisting of the rarefaction wave $(\bar\rho,\bar u_1)$ and the hyperbolic wave $(d_1,d_2)$ to the compressible Navier-Stokes equations can be defined by
\begin{equation}\label{AW0}
\tilde{\rho}(t, x_1) = (\bar{\rho} + d_1)(t, x_1), \quad \tilde{m}_1(t, x_1) = (\bar{m}_1 + d_2)(t, x_1) := \tilde{\rho}\tilde{u}_1(t, x_1).
\end{equation}
Then the approximate wave profile $(\tilde{\rho}, \tilde{u}_1)$ satisfies the system
\begin{equation}  \label{AW}
\begin{cases}
\displaystyle \tilde{\rho}_t + (\tilde{\rho}\tilde{u}_1)_{x_1} = 0, \\
\displaystyle (\tilde{\rho} \tilde{u}_1)_t + (\tilde{\rho} \tilde{u}_1^2 + p(\tilde{\rho}))_{x_1} = (2\mu + \lam)\varepsilon \bar{u}_{1x_1x_1} +  (\tilde{\rho} \tilde{u}_1^2 - \bar{\rho} \bar{u}_1^2 + \bar{u}_1^2d_1 - 2\bar{u}_1d_2)_{x_1} \\
\qquad \qquad \qquad \qquad \qquad \quad + (p(\tilde{\rho}) - p(\bar{\rho}) - p'(\bar{\rho})d_1)_{x_1},
\end{cases}
\end{equation}
with the initial data
\begin{equation} \label{AWI}
(\tilde{\rho}, \tilde{u}_1)(0, x_1) = (\bar{\rho}_0, \bar{u}_{10})(x_1).
\end{equation}

\bigskip

%
%
\section{Reformulation of the Problem}
\setcounter{equation}{0}

To prove Theorem \ref{theorem1}, the solution $(\rho^\varepsilon, u_1^\varepsilon, u_2^\varepsilon)$ to the system \eqref{NS} is constructed as the perturbation around the approximate wave profile $(\tilde{\rho}, \tilde{u}_1, 0)$ defined in \eqref{AW0} and \eqref{AW}. Set the perturbation around the approximate wave profile $(\tilde{\rho}, \tilde{u}_1, 0)(t, x_1)$ by
\begin{equation}\label{per}
\begin{array}{ll}
\di \phi(t, x_1, x_2) := \rho^\varepsilon(t, x_1, x_2) - \tilde{\rho}(t, x_1), \\
\di \Psi(t, x_1, x_2) = (\psi_1, \psi_2)^\top(t, x_1, x_2) := (u_1^\varepsilon, u_2^\varepsilon)^\top(t, x_1, x_2) - (\tilde{u}_1, 0)^\top(t, x_1),
\end{array}
\end{equation}
with $(\rho^\varepsilon, u_1^\varepsilon, u_2^\varepsilon)$ being the solution to the problem \eqref{NS} with the following initial data:
\begin{equation} \label{PNSI}
(\rho^\varepsilon, u_1^\varepsilon, u_2^\varepsilon)(0, x_1, x_2) := (\bar{\rho}_0, \bar{u}_{10}, 0)(x_1)+(\phi_0,\psi_{10}, \psi_{20})(x_1,x_2).
\end{equation}
For convenience, we reformulate the system by introducing a scaling for the independent variables. Set
\[
\tau = \frac{t}{\varepsilon}, \quad y_1 = \frac{x_1}{\varepsilon}, \quad y_2 = \frac{x_2}{\varepsilon}.
\]
For simplicity of notation, the superscription of $(\rho^\varepsilon, u_1^\varepsilon, u_2^\varepsilon)$ will be omitted as $(\rho, u_1, u_2)$ from now on if there is no confusion of notation. And here we still use the notations $(\rho, u_1, u_2)(\tau, y_1, y_2), (\tilde{\rho}, \tilde{u}_1)(\tau, y_1), (\bar{\rho}, \bar{u}_1)(\tau, y_1)$ and $(\phi,\Psi)(\tau, y_1, y_2)$ in the scaled independent variables, if without any confusion. 
From \eqref{NS} and \eqref{AW}, we obtain the following system for the perturbation $(\phi, \Psi):$
\begin{equation}  \label{REF}
    \begin{cases}
    \displaystyle \phi_{\tau} + \rho div\Psi + \rho_{y_2}\psi_2 + u_1\phi_{y_1} + \tilde{\rho}_{y_1}\psi_1 + \tilde{u}_{1y_1}\phi = 0, \\
    \displaystyle \rho \Psi_{\tau} + \rho u_1\Psi_{y_1} + \rho u_2 \Psi_{y_2} + (\rho \tilde{u}_{1y_1} \psi_1, 0)^\top + p'(\rho) \nabla\phi + ((p'(\rho) - \frac{\rho}{\tilde{\rho}} p'(\tilde{\rho})) \tilde{\rho}_{y_1}, 0)^\top \\
    \quad + ((2\mu + \lam)\frac{\bar{u}_{1y_1y_1}}{\tilde{\rho}}\phi, 0)^\top + (\frac{(\tilde{\rho} \tilde{u}_1^2 - \bar{\rho} \bar{u}_1^2 + \bar{u}_1^2d_1 - 2\bar{u}_1d_2)_{y_1}}{\tilde{\rho}}\phi, 0)^\top + (\frac{(p(\tilde{\rho}) - p(\bar{\rho}) - p'(\bar{\rho})d_1)_{y_1}}{\tilde{\rho}}\phi, 0)^\top \\
    = \mu\triangle\Psi + (\mu + \lam)\nabla div\Psi + ((2\mu + \lam)(\frac{-d_1\bar{u}_1 + d_2}{\tilde{\rho}})_{y_1y_1}, 0)^\top \\
    \quad - ((\tilde{\rho} \tilde{u}_1^2 - \bar{\rho} \bar{u}_1^2 + \bar{u}_1^2d_1 - 2\bar{u}_1d_2)_{y_1}, 0)^\top - ((p(\tilde{\rho}) - p(\bar{\rho}) - p'(\bar{\rho})d_1)_{y_1}, 0),
    \end{cases}
\end{equation}
\begin{equation} \label{REFI}
  (\phi, \Psi)(0, y_1, y_2) = (\phi, \psi_1, \psi_2)(0, y_1, y_2)= (\phi_0,\psi_{10}, \psi_{20})(y_1,y_2),
\end{equation}
where the initial perturbation is chosen to satisfy
\begin{equation}\label{ip}
\|(\phi_0,\psi_{10}, \psi_{20})(y_1,y_2)\|_{H^2(\mathbb{R}\times\mathbb{T}_\varepsilon)}=O(\varepsilon^{\frac16}).
\end{equation}
The solution of \eqref{REF}, \eqref{REFI} is sought in the set of functional space  $X(0, \frac{T}{\varepsilon})$, where for $0 \leq \tau_1 \leq \frac{T}{\varepsilon}$, we define
\begin{align*}
X(0, \tau_1) = \left\{ (\phi, \Psi)| \, (\phi, \Psi) \in C^0(0, \tau_1; H^2), \nabla \phi \in L^2(0, \tau_1; H^1), \nabla \Psi \in L^2(0, \tau_1; H^2)\right\}.
\end{align*}
We take $\delta = \varepsilon^a$ in what follows. By the estimate of the hyperbolic wave in Lemma \ref{lemma2.3}, we have
\[
|d_i| \leq C_T \frac{\varepsilon}{\delta^{3/2}} = C_T \varepsilon^{1 - \frac{3}{2}a} \leq \frac{1}{4}\rho_-, \qquad i=1,2,
\]
provided that $a < \frac{2}{3}$ and $\varepsilon \ll 1$. Then we have
\[
0 < \frac{3}{4}\rho_- = \rho_- - \frac{1}{4}\rho_- \leq \tilde{\rho} = \bar{\rho} + d_1 \leq \rho_+ + \frac{1}{4}\rho_-, \quad |\tilde{u}_1| \leq C,
\]
since $0 < \rho_- \leq \bar{\rho} \leq \rho_+, |\bar{u}_1| \leq C$.
In what follows, the analysis is always carried out under the a priori assumption
\begin{equation} \label{PA}
E = E(0, \tau_1(\varepsilon)) = \sup_{\tau \in [0, \tau_1(\varepsilon)]}\| (\phi, \Psi)(\tau) \|_2 \ll 1,
\end{equation}
where $[0, \tau_1(\varepsilon)]$ is the time interval in which the solution exists and it may depend on $\varepsilon$.
Under the a priori assumption \eqref{PA}, we can get
\begin{equation} \label{db}
	0 < \frac{1}{2} \rho_- = \frac{3}{4}\rho_- - \frac{1}{4}\rho_- \leq \rho = \phi + \tilde{\rho} \leq \rho_+ + \frac{1}{4}\rho_- + \frac{1}{4}\rho_- = \rho_+ + \frac{1}{2}\rho_-, \quad |\textbf{u}| \leq C,
\end{equation}
because we can take $E$ suitably small such that $|(\phi, \Psi)| \leq C \| (\phi, \Psi)(\tau) \|_2 \leq \frac{1}{4}\rho_-$.
The uniform   bounds  of the density  $\rho$ ensure that   the momentum equation $\eqref{NS}_2$ is strictly parabolic,  and thus    crucial for the local and global  existence of   classical solution of the system \eqref{NS}.

\begin{proposition} \label{proposition3.1}
There exists a positive constant $\varepsilon_0<1$ 
such that if $0 < \varepsilon \leq \varepsilon_0$, then the reformulated problem \eqref{REF}-\eqref{REFI} admits a unique solution $(\phi, \Psi) \in X(0, \frac{T}{\varepsilon})$ satisfying
  \begin{align}
  \begin{aligned} \label{PRO3.1}
    &\sup_{0 \leq \tau \leq  \frac{T}{\varepsilon}} \| (\phi, \Psi)(\tau) \|_2^2 + \int_{0}^{\frac{T}{\varepsilon}} \Big[\|\bar{u}_{1y_1}^{1/2} (\phi, \psi_1)\|^2 + \|(\nabla\phi, \nabla\Psi)\|_1^2 + \|\nabla^3\Psi\|^2 \Big]d\tau \\
    &\leq C_T \frac{\varepsilon}{\delta^4} + C \| (\phi_0, \Psi_0) \|_2^2,
  \end{aligned}
  \end{align}
where the constant $C_T$ is independent of $\varepsilon, \delta$, but may depend on $T$.
\end{proposition}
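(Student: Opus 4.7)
The plan is to run a continuation argument: combine local-in-time existence for the parabolic--hyperbolic system \eqref{REF}--\eqref{REFI} (which follows from the uniform parabolicity granted by \eqref{db} and standard quasilinear theory in the scaled coordinates) with the uniform a priori bound \eqref{PRO3.1} on any interval $[0,\tau_1(\varepsilon)]\subset[0,T/\varepsilon]$ on which the smallness assumption \eqref{PA} holds. The main task is therefore to establish \eqref{PRO3.1}, which I would do by a three-tier $L^2$ energy method on the perturbation $(\phi,\Psi)$ and its derivatives up to second order, exploiting the expansion structure of the 2-rarefaction wave to extract the weighted dissipation $\bar u_{1y_1}^{1/2}(\phi,\psi_1)$ appearing on the left-hand side of \eqref{PRO3.1}.

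\textbf{Zeroth-order estimate.} Following the Matsumura--Nishihara framework for rarefaction-wave stability, I would multiply the mass equation in \eqref{REF} by a relative pressure weight (e.g.\ $p'(\tilde\rho)\phi/\tilde\rho$, or equivalently work with the relative entropy $\rho\int_{\tilde\rho}^{\rho}\frac{p(s)-p(\tilde\rho)}{s^2}\,ds$), and the momentum equation by $\Psi$. Adding and integrating over $\bbr\times\bbt_\varepsilon$, the principal flux terms cancel against the profile equation \eqref{AW} and one arrives at an identity of the form
\[
\frac{d}{d\tau}\mathcal{E}_0+\mu\|\nabla\Psi\|^2+(\mu+\lam)\|\div\Psi\|^2+c_0\int \bar u_{1y_1}(\phi^2+\psi_1^2)\,dy_1\,dy_2\le \mathcal{R}_0,
\]
where $\mathcal{E}_0\sim\|(\phi,\Psi)\|^2$ thanks to \eqref{db}, and the positive weighted term arises because the 2-rarefaction is genuinely nonlinear and expanding. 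The residual $\mathcal{R}_0$ collects three kinds of error: (i) viscous errors generated by the approximate profile --- these are exactly what the hyperbolic wave $(d_1,d_2)$ was designed to cancel via \eqref{HW}, so only higher-order remainders survive; (ii) quadratic and cubic terms in $d_i$, controlled by Lemma \ref{lemma2.3}; (iii) nonlinear perturbation terms, absorbed by the a priori smallness \eqref{PA}. A careful accounting using Lemmas \ref{lemma2.2}--\ref{lemma2.3} produces $\int_0^{T/\varepsilon}\mathcal{R}_0\,d\tau\le C_T\,\varepsilon/\delta^4$.

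\textbf{Higher-order estimates and closing.} Differentiating \eqref{REF} in $y_1$ and $y_2$ up to two orders and repeating the weighted energy procedure produces the remaining bounds. A key observation is that the profile $(\tilde\rho,\tilde u_1,0)$ is independent of $y_2$, so $\partial_{y_2}$-derivatives generate no $\bar u_{1y_1}$-weighted sign-indefinite terms; this is what makes the 2D problem tractable by essentially the same machinery used in \cite{LW,LWW} for time-asymptotic stability. The parabolic structure of the momentum equation immediately yields $\|\nabla^2\Psi\|^2$ and $\|\nabla^3\Psi\|^2$ as dissipation, while the missing $\int\|\nabla\phi\|_1^2\,d\tau$ is recovered by testing a $y$-differentiated mass equation against a suitable linear combination with the momentum equation (an effective-viscous-flux/Kanel-type manipulation, again as in \cite{LW,LWW}). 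Summing the three tiers with small balancing constants and applying Gronwall's inequality closes \eqref{PRO3.1}; since the final choice $\delta=\varepsilon^{1/6}$ renders $\varepsilon/\delta^4=\varepsilon^{1/3}\to 0$, and $\|(\phi_0,\Psi_0)\|_2=O(\varepsilon^{1/6})$ by \eqref{ip}, the obtained bound strictly improves \eqref{PA} once $\varepsilon\le\varepsilon_0$, and the continuation closes.

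\textbf{Main obstacle.} The most delicate issue is to show that the error terms generated by the ansatz integrate in time to at most $O(\varepsilon/\delta^4)$ rather than blowing up. The two worst offenders are (a) the leading viscous--inviscid mismatch of size $\varepsilon\bar u_{1y_1y_1}$, which would be fatal at the energy level without the cancellation built into \eqref{HW}; and (b) the residual fluxes $\bar u_1^2 d_1-2\bar u_1 d_2$ and $p(\tilde\rho)-p(\bar\rho)-p'(\bar\rho)d_1$ together with their $y_1$-derivatives appearing in \eqref{REF}, whose $L^2$ norms must be controlled by the sharp $k=0,1,2,3$ bounds $\|\partial_{y_1}^k(d_1,d_2)\|\le C_T\varepsilon/\delta^{k+1}$ in Lemma \ref{lemma2.3}. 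Obtaining the power $\varepsilon/\delta^4$ at the second-derivative level is precisely what makes the balance $\delta=\varepsilon^{1/6}$ available and ultimately yields the convergence rate $\varepsilon^{1/6}|\ln\varepsilon|$ asserted in \eqref{AB}.
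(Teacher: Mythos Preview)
Your proposal is correct and follows essentially the same approach as the paper: a continuation argument built on local existence plus the a priori estimate \eqref{PRO3.2}, which the paper establishes via exactly the relative-entropy zeroth-order estimate, the Kanel-type cross-multiplication $(2\mu+\lambda)\times(\text{differentiated mass equation}\cdot\nabla\phi/\rho^2)+(\text{momentum equation}\cdot\nabla\phi/\rho)$ to recover the $\nabla\phi$ dissipation, and parabolic multipliers $-\triangle\Psi/\rho$, $-\nabla\triangle\Psi$ for the higher-order $\Psi$-estimates, with errors controlled by Lemmas \ref{lemma2.2}--\ref{lemma2.3}. The paper's Remark after Lemma \ref{lemma4.1} confirms your identification of the main obstacle: without the hyperbolic wave absorbing the $(2\mu+\lambda)\varepsilon\bar u_{1y_1y_1}$ term, already the zeroth-order estimate would fail to be uniform in $\varepsilon$.
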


Once the Proposition \ref{proposition3.1} is proved, we have
$$
\begin{array}{ll}
  \di \sup_{0 \leq t \leq T} \| (\phi, \Psi)(t, x_1, x_2) \|_{L^\infty(\mathbb{R}\times\mathbb{T})} = \sup_{0 \leq \tau \leq  \frac{T}{\varepsilon}} \| (\phi, \Psi)(\tau, y_1, y_2) \|_{L^\infty(\mathbb{R}\times\mathbb{T}_\varepsilon)}\\
  \di  \leq C \sup_{0 \leq \tau \leq  \frac{T}{\varepsilon}} \| (\phi, \Psi)(\tau) \|_2 \leq C_T \frac{\varepsilon^{1/2}}{\delta^2} + C \| (\phi_0, \Psi_0) \|_2,
  \end{array}
$$
where we have used Sobolev imbedding $\|f\|_{L^\infty(\mathbb{R}\times\mathbb{T}_\varepsilon)}\leq C\|f\|_{H^2(\mathbb{R}\times\mathbb{T}_\varepsilon)}$ with the imbedding constant $C$ independent of $\varepsilon$ even though the domain $\mathbb{R}\times\mathbb{T}_\varepsilon$ depends on $\varepsilon$.
Thus we get
\begin{align*}
  &\|(\rho, u_1, u_2)(t, x_1, x_2) - (\rho^r, u_1^r, 0)(\frac{x_1}{t})\|_{L^\infty(\mathbb{R}\times\mathbb{T})} \\
  &\leq \| (\phi, \Psi)(t, x_1, x_2) \|_{L^\infty(\mathbb{R}\times\mathbb{T})} + C \| (d_1, d_2)(t, x_1) \|_{L^\infty(\mathbb{R})} + \|(\bar{\rho}, \bar{u}_1)(t, x_1) - (\rho^r, u_1^r)(\frac{x_1}{t})\|_{L^\infty(\mathbb{R})} \\
  &\leq C_T \frac{\varepsilon^{1/2}}{\delta^2} + C \varepsilon^{1/6} + C_T \frac{\varepsilon}{\delta^{3/2}} + C \delta t^{-1} [\ln(1 + t) + |\ln \delta|] \\
  &= C_T \varepsilon^{\frac{1}{2} - 2a} + C \varepsilon^{1/6} + C_T \varepsilon^{1 - \frac{3}{2}a} + C \varepsilon^a t^{-1} [\ln(1 + t) + |\ln \varepsilon|].
\end{align*}
Taking $a = \frac{1}{6}$, i.e. $\delta = \varepsilon^{1/6}$ and then the proof of Theorem \ref{theorem1} is completed.

The proof for the local existence and uniqueness of the classical solution to \eqref{REF}-\eqref{REFI} is standard (c.f.  \cite{So}),  especially  for the suitably small perturbation of the solution around the ansatz including both the planar rarefaction wave and the hyperbolic wave satisfying   \eqref{db},  and thus will be omitted. To prove Proposition \ref{proposition3.1}, it suffices to establish the following a priori estimates.
\begin{proposition}[a priori estimates] \label{proposition3.2}
Suppose that the reformulated problem \eqref{REF}-\eqref{REFI} has a solution $(\phi, \Psi) \in X(0, \tau_1(\varepsilon))$ for some $\tau_1(\varepsilon)(>0)$. Then there exists a positive constant $\varepsilon_1$ which is independent of $\varepsilon, \delta$ and $\tau_1(\varepsilon
)$, such that if $0 < \varepsilon \leq \varepsilon_1$ and $E(0, \tau_1(\varepsilon)) \ll 1$,
then it holds 
  \begin{align}
	\begin{aligned} \label{PRO3.2}
	  &\sup_{0 \leq \tau \leq  \tau_1(\varepsilon)} \| (\phi, \Psi)(\tau) \|_2^2 + \int_{0}^{\tau_1(\varepsilon)} \Big[\|\bar{u}_{1y_1}^{1/2} (\phi, \psi_1)\|^2 + \|(\nabla\phi, \nabla\Psi)\|_1^2 + \|\nabla^3\Psi\|^2 \Big]d\tau \\
	  &\leq C_T \frac{\varepsilon}{\delta^4} + C \| (\phi_0, \Psi_0) \|_2^2,
	\end{aligned}
  \end{align}
where the constant $C_T$ is independent of $\varepsilon$ and $\delta$, but may depend on $T$.
\end{proposition}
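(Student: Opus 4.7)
The plan is to carry out an $H^2$-energy estimate for the perturbation $(\phi,\Psi)$ to the system \eqref{REF}--\eqref{REFI} in the scaled variables $(\tau,y_1,y_2)$, tracking all source terms so the right-hand side integrates up to $C_T\varepsilon/\delta^4$. Two structural features make this tractable. First, the scaling renders the viscosity coefficients in \eqref{REF} independent of $\varepsilon$, so the momentum equation supplies full $O(1)$ parabolic dissipation of $\nabla\Psi$ (and of $\nabla^3\Psi$ at the top level). Second, the profile $(\tilde\rho,\tilde u_1)$ depends only on $y_1$, so $\partial_{y_2}$ annihilates every profile factor and the two-dimensional problem is driven purely by $y_1$-gradients of the ansatz plus the usual cubic perturbation self-interactions, which are controlled by the smallness assumption \eqref{PA}.

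For the basic $L^2$-estimate I would multiply the first equation of \eqref{REF} by $p'(\rho)\phi/\rho$ and the second by $\Psi$, add them, and integrate over $\bbr\times\bbt_\varepsilon$. A manipulation in the spirit of the 1D rarefaction-wave stability argument in \cite{LW,LWW} produces on the left the energy $\frac{d}{d\tau}\int\bigl(\tfrac{p'(\rho)}{2\rho}\phi^2+\tfrac{\rho}{2}|\Psi|^2\bigr)dy$, the viscous dissipation $\mu\|\nabla\Psi\|^2+(\mu+\lam)\|\mathrm{div}\,\Psi\|^2$, and the good rarefaction-wave dissipation $c\|\bar u_{1y_1}^{1/2}(\phi,\psi_1)\|^2$ with $c>0$, the last term being positive thanks to genuine nonlinearity and $\bar u_{1y_1}\geq 0$ from Lemma \ref{lemma2.2}. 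The source splits into: (i) the residual $(2\mu+\lam)\bar u_{1y_1y_1}$ from the ansatz, of size $O(\varepsilon/\delta^2)$ in $L^2_\tau L^2_y$ after rescaling Lemma \ref{lemma2.2}; (ii) hyperbolic-wave corrections such as $(p(\tilde\rho)-p(\bar\rho)-p'(\bar\rho)d_1)_{y_1}$ and the $\bar u_1$--$d$ cross terms, which are quadratic in $(d_1,d_2)$ and hence $O(\varepsilon^2/\delta^3)$ by Lemma \ref{lemma2.3}; (iii) cubic perturbation terms handled by \eqref{PA}. Good sign structure absorbs all linear-in-$(\phi,\psi_1)$ error contributions into the rarefaction dissipation via Cauchy--Schwarz.

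First- and second-order estimates are obtained by applying $\partial_{y_j}^k$ with $k\in\{1,2\}$ and $j\in\{1,2\}$ and repeating. Since viscosity does not directly control $\nabla^k\phi$, I would invoke the standard weakly parabolic trick: pair $\nabla^{k-1}$ of the momentum equation with $\nabla^k\phi$, whose leading contribution is $p'(\rho)|\nabla^k\phi|^2$; the inertia term $\rho\Psi_\tau\cdot\nabla^k\phi$ is integrated by parts in $\tau$, and the resulting $\nabla^k\phi_\tau$ is eliminated using the continuity equation, converting it into quantities already controlled by $\|\nabla^k\Psi\|_1^2$. At the top level this closes because the viscosity furnishes $\|\nabla^3\Psi\|^2$. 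All source terms at each level are estimated by Lemmas \ref{lemma2.2} and \ref{lemma2.3} after $(\tau,y)$-rescaling and integration on $[0,T/\varepsilon]$; importantly, because $(\tilde\rho,\tilde u_1)$ has no $y_2$-dependence, only $y_1$-derivatives of the profiles appear, and these are exactly what Lemmas \ref{lemma2.2}--\ref{lemma2.3} bound.

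The main obstacle is the top-order $H^2$ estimate, where the largest error comes from $\|\partial_{y_1}^3(d_1,d_2)\|^2\leq C_T(\varepsilon/\delta^4)^2$ in Lemma \ref{lemma2.3}: integration over $[0,T/\varepsilon]$ converts this pointwise $\varepsilon^2/\delta^8$ into precisely the $C_T\varepsilon/\delta^4$ on the right of \eqref{PRO3.2}, and any slack here would destroy the final rate. I must also verify that cross terms like $\int \bar u_{1y_1}\phi\psi_1\,dy$ and cubic perturbations at the $H^2$ level are swallowed by a combination of the rarefaction dissipation, the viscous dissipation, and the smallness \eqref{PA}, rather than by themselves; this relies crucially on the $O(1)$ viscosity in the scaled system. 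Summing the three levels and applying Gronwall on $[0,\tau_1(\varepsilon)]$ then yields \eqref{PRO3.2}. The subsequent choice $\delta=\varepsilon^{1/6}$ made in Section 3 balances $\sqrt{\varepsilon/\delta^4}=\varepsilon^{1/6}$ from the $H^2$-bound against the $O(\delta|\ln\delta|)$ Lipschitz error of the smooth rarefaction approximation, producing the convergence rate of Theorem \ref{theorem1}.
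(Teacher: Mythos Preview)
Your overall scheme—layered $L^2$, $H^1$, $H^2$ energy estimates closed via the smallness \eqref{PA}, with the $\nabla^k\phi$ dissipation recovered by pairing $\nabla^{k-1}$ of the momentum equation against $\nabla^k\phi$—matches the paper's Section~4 (Lemmas \ref{lemma4.1}--\ref{lemma4.5}). But there is a genuine gap in your identification of the source terms, and it is precisely the point where the hyperbolic wave does its work.

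You list ``the residual $(2\mu+\lam)\bar u_{1y_1y_1}$ from the ansatz'' as a source at the $L^2$ level. That term is \emph{not} present in \eqref{REF}: the hyperbolic wave $(d_1,d_2)$ was constructed in \eqref{HW} exactly so that the ansatz \eqref{AW} already carries $(2\mu+\lam)\varepsilon\bar u_{1x_1x_1}$ on its right-hand side, cancelling the bare rarefaction viscosity. What survives as the viscous source in \eqref{REF} is the difference
\[
(2\mu+\lam)(\tilde u_1-\bar u_1)_{y_1y_1}=(2\mu+\lam)\Bigl(\frac{-d_1\bar u_1+d_2}{\tilde\rho}\Bigr)_{y_1y_1},
\]
together with the quadratic-in-$d$ pressure and flux corrections you list under (ii). The distinction is decisive: if $(2\mu+\lam)\bar u_{1y_1y_1}$ were still present, then pairing with $\psi_1$ and integrating by parts leaves $\int_0^{T/\varepsilon}\|\bar u_{1y_1}\|_{L^2(\bbr\times\bbt_\varepsilon)}^2\,d\tau\sim \varepsilon^{-1}|\ln\delta|$, which is not uniform; the alternative route through the rarefaction dissipation fails for the same reason, since $\int_0^{T/\varepsilon}\!\int \bar u_{1y_1y_1}^2/\bar u_{1y_1}\,dy\,d\tau\sim\delta^{-2}$. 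This is exactly the content of the remark the paper places after Lemma~\ref{lemma4.1}.

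A second, related miscount: the dominant $C_T\varepsilon/\delta^4$ on the right of \eqref{PRO3.2} does not originate from $\|\partial_{y_1}^3(d_1,d_2)\|^2$ at the $H^2$ level (that contribution scales as $\varepsilon^5/\delta^8$ after the change of variables). It comes already at the $L^2$ level, from the first-derivative hyperbolic-wave residual: after one integration by parts, the pairing produces $\int_0^{T/\varepsilon}\|(\frac{-d_1\bar u_1+d_2}{\tilde\rho})_{y_1}\|_{L^2(\bbr\times\bbt_\varepsilon)}^2 d\tau$, and Lemma~\ref{lemma2.3} gives this the value $C_T\varepsilon/\delta^4$. So the budget is set at the lowest order and is merely preserved at higher orders. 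Once you correct this source bookkeeping, the remainder of your plan—in particular the cancellation between the $(2\mu+\lam)$-weighted differentiated continuity equation and the momentum equation that yields $\int\frac{p'(\rho)}{\rho}|\nabla^k\phi|^2$—aligns with Lemmas~\ref{lemma4.2} and \ref{lemma4.4}.
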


We note that   Theorem \ref{theorem1} follows once the Proposition \ref{proposition3.2} is proved. The remaining part of this paper, i.e.,  Section 4,  is devoted to the proof of Proposition \ref{proposition3.2}.

\bigskip

%
%
\section{A Priori Estimates}
\setcounter{equation}{0}

In this section, we shall prove Proposition \ref{proposition3.2}. Throughout this section 
we assume that  \eqref{RC} holds with fixed $\rho_\pm >0, u_{1\pm} \in \bbr$,   and   \eqref{REF}-\eqref{REFI} has a solution $(\phi, \Psi) \in X(0, \tau_1(\varepsilon))$ for some $\tau_1(\varepsilon)>0$. We use $C$ to denote a generic positive constant that may depend on $(\rho_\pm, u_{1\pm})$ but not  $\varepsilon, \delta$ and $T$,  and denote by $C_T$ as a generic positive constant  that may depend on $(\rho_\pm, u_{1\pm})$ and $T$ but not $\varepsilon$ and $\delta$. 
Set $E = \di\sup_{0 \leq \tau \leq \tau_1(\varepsilon)} \| (\phi, \Psi)(\tau) \|_2$.

\begin{lemma} \label{lemma4.1}
  There exists a positive constant $C_T$ such that for $0 \leq \tau \leq \tau_1(\varepsilon)$,
  \begin{align}
    \begin{aligned} \label{LEM4.1}
    \sup_{0 \leq \tau \leq \tau_1(\varepsilon)} \| (\phi, \Psi)(\tau) \|^2 + \int_{0}^{\tau_1(\varepsilon)}\big[ \|\bar{u}_{1y_1}^{1/2} (\phi,  \psi_1) \|^2 + \|\nabla \Psi\|^2\big] d\tau
    \leq C_T \frac{\varepsilon}{\delta^4} + C \| (\phi_0, \Psi_0) \|^2.
    \end{aligned}
  \end{align}
\end{lemma}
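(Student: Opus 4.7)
The plan is to apply a relative--entropy / energy argument in the Matsumura--Nishihara spirit, adapted to the scaled variables. I would introduce the mechanical energy functional
\[
\mathcal{E}(\tau) = \int_{\bbr\times\bbt_\varepsilon}\Bigl[\tfrac12 \rho|\Psi|^2 + \Phi(\rho,\tilde\rho)\Bigr]\,dy,
\]
where $\Phi(\rho,\tilde\rho) = \frac{1}{\gamma(\gamma-1)}\bigl[\rho^\gamma - \tilde\rho^\gamma - \gamma\tilde\rho^{\gamma-1}(\rho-\tilde\rho)\bigr]$ is the standard pressure relative entropy. By \eqref{db} and the a priori assumption \eqref{PA}, $\mathcal{E}(\tau)$ is equivalent to $\|(\phi,\Psi)(\tau)\|^2$ with constants independent of $\varepsilon$.

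Differentiating $\mathcal{E}$ in $\tau$ using the perturbation system \eqref{REF} and the profile equation \eqref{AW}, and integrating by parts in $y$ (legitimate by $y_2$-periodicity and $y_1$-decay), I expect to arrive at a differential identity of the schematic form
\[
\frac{d\mathcal{E}}{d\tau} + \mu\|\nabla\Psi\|^2 + (\mu+\lam)\|\div \Psi\|^2 + \int_{\bbr\times\bbt_\varepsilon} G\,\bar u_{1y_1}(\phi^2+\psi_1^2)\,dy = \mathcal{R}(\tau),
\]
with coefficient $G>0$ uniformly in $\varepsilon$ thanks to \eqref{db} and the genuine nonlinearity of the $2$-field. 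The $\bar u_{1y_1}(\phi^2+\psi_1^2)$ term is the rarefaction ``good'' contribution (recall $\bar u_{1y_1}\ge 0$), which yields precisely the $\|\bar u_{1y_1}^{1/2}(\phi,\psi_1)\|^2$ output in \eqref{LEM4.1}, while the standard viscous dissipation furnishes $\|\nabla\Psi\|^2$. The remainder $\mathcal{R}$ collects the hyperbolic--wave--induced source errors together with genuine nonlinear corrections.

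The core of the proof is the bound on $\mathcal{R}$, which I would split into three groups. \textbf{(i)} The viscous source $(2\mu+\lam)\bigl(\tfrac{-d_1\bar u_1 + d_2}{\tilde\rho}\bigr)_{y_1y_1}\psi_1$ arising in \eqref{REF}: after one integration by parts and Cauchy--Schwarz this is controlled by $\tfrac{\mu}{4}\|\nabla\Psi\|^2$ plus an $L^1_\tau$ term bounded via Lemma \ref{lemma2.3}. \textbf{(ii)} The quadratic hyperbolic--wave flux errors $(\tilde\rho\tilde u_1^2-\bar\rho\bar u_1^2+\bar u_1^2 d_1-2\bar u_1 d_2)_{y_1}$ and $(p(\tilde\rho)-p(\bar\rho)-p'(\bar\rho)d_1)_{y_1}$: they are of size $O(|d||d_{y_1}|+|d|^2\bar u_{1y_1})$ and again fall under Lemmas \ref{lemma2.2}--\ref{lemma2.3}. \textbf{(iii)} Cubic contributions in $(\phi,\Psi)$ from Taylor expansion of the pressure and the convection: these are absorbed by the dissipation using $E\ll 1$ and Sobolev embedding on $\bbr\times\bbt_\varepsilon$. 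Cross terms such as $\tilde u_{1y_1}\phi\psi_1$ are absorbed into the good rarefaction term by Cauchy with a small weight.

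Integrating the differential inequality in $\tau\in[0,\tau_1(\varepsilon)]$ and applying Gronwall's inequality then yields \eqref{LEM4.1} once the source integral is computed by pulling Lemmas \ref{lemma2.2}--\ref{lemma2.3} back to scaled variables: every $y$-derivative of a profile quantity costs a factor of $\varepsilon$ while $dx_1=\varepsilon\,dy_1$ costs a $1/\varepsilon$, and after this bookkeeping the leading contribution turns out to be exactly $C_T\varepsilon/\delta^4$. The \emph{main obstacle} is Group (i): if one had used the inviscid rarefaction profile alone, the residual viscous source $(2\mu+\lam)\varepsilon\bar u_{1x_1x_1}$ would be too singular in $\delta$ to admit a uniform-in-$\varepsilon$ bound; the hyperbolic wave $(d_1,d_2)$ was constructed in Section 2 precisely so that its second $y_1$-derivative absorbs this source at the sharp level provided by Lemma \ref{lemma2.3}. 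Tracking this cancellation together with the $\varepsilon$-weights generated by the scaling $(\tau,y)=(t,x)/\varepsilon$ is the delicate technical point; the rest of the argument is a lengthy but routine energy computation.
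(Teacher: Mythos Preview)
Your proposal is correct and follows essentially the same relative-entropy route as the paper, which likewise combines $\tfrac12\rho|\Psi|^2$ with $\rho\Phi(\rho,\tilde\rho)$, extracts the same good rarefaction and viscous dissipation terms, and bounds the hyperbolic-wave error groups (i)--(ii) via Lemmas~\ref{lemma2.2}--\ref{lemma2.3} together with the $\varepsilon$-scaling bookkeeping you outline. One small note: at this $L^2$ level the entropy identity is exact, so no genuine cubic-in-$(\phi,\Psi)$ terms arise (your group (iii) is empty here), and the paper closes not by Gronwall but by absorbing the residual $\sup_\tau\|(\phi,\psi_1)\|^2$ contributions on the right via smallness of $\varepsilon/\delta$.
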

\begin{proof}
  First, multiplying the second equation of \eqref{REF} by $\Psi$ gives
  \begin{align}
    \begin{aligned} \label{01}
      &(\frac{1}{2}\rho|\Psi|^2)_\tau + \frac{1}{2}div(\rho\textbf{u}|\Psi|^2) - \mu div(\psi_i\nabla\psi_i) - (\mu + \lam) div(\Psi div\Psi) + \rho\tilde{u}_{1y_1}\psi_1^2 \\
      &\quad + \mu|\nabla\Psi|^2 + (\mu + \lam) (div\Psi)^2 + p'(\rho)\nabla\phi\cdot\Psi + (p'(\rho) - \frac{\rho}{\tilde{\rho}} p'(\tilde{\rho})) \tilde{\rho}_{y_1}\psi_1 \\
      &= (2\mu + \lam)(\frac{-d_1\bar{u}_1 + d_2}{\tilde{\rho}})_{y_1y_1}\psi_1 - (\tilde{\rho} \tilde{u}_1^2 - \bar{\rho} \bar{u}_1^2 + \bar{u}_1^2d_1 - 2\bar{u}_1d_2)_{y_1}\psi_1 \\
      &\quad - (p(\tilde{\rho}) - p(\bar{\rho}) - p'(\bar{\rho})d_1)_{y_1}\psi_1 - (2\mu + \lam)\frac{\bar{u}_{1y_1y_1}}{\tilde{\rho}}\phi\psi_1 \\
      &\quad - \frac{(\tilde{\rho} \tilde{u}_1^2 - \bar{\rho} \bar{u}_1^2 + \bar{u}_1^2d_1 - 2\bar{u}_1d_2)_{y_1}}{\tilde{\rho}}\phi\psi_1 - \frac{(p(\tilde{\rho}) - p(\bar{\rho}) - p'(\bar{\rho})d_1)_{y_1}}{\tilde{\rho}}\phi\psi_1.
    \end{aligned}
  \end{align}
Define the potential energy by
  \[
    \Phi(\rho , \tilde{\rho}) := \int_{\tilde{\rho}}^{\rho} \frac{p(s) - p(\tilde{\rho})}{s^2} ds
 = \frac{1}{(\gamma-1)\rho}(p(\rho) - p(\tilde{\rho}) - p'(\tilde{\rho})\phi).
\]
Direct computations yield  
  \begin{align}
    \begin{aligned} \label{02}
      &(\rho\Phi)_\tau + div(\rho\textbf{u}\Phi + (p(\rho) - p(\tilde{\rho}))\Psi) + \tilde{u}_{1y_1}(p(\rho) - p(\tilde{\rho}) - p'(\tilde{\rho})\phi) - p'(\rho)\nabla\phi\cdot\Psi \\
      &\quad - (p'(\rho) - \frac{\rho}{\tilde{\rho}} p'(\tilde{\rho})) \tilde{\rho}_{y_1}\psi_1 = 0.
    \end{aligned}
  \end{align}
Combining \eqref{01} and \eqref{02} together and then integrating the resulting equation over $[0, ~\tau]\times\bbr\times\bbt_\varepsilon$ imply
  \begin{align}
	\begin{aligned} \label{03}
	  &\|(\phi, \Psi)\|^2(\tau) + \int_{0}^{\tau} \big[\|\bar{u}_{1y_1}^{1/2} (\phi, \psi_1) \|^2 + \|\nabla \Psi\|^2\big] d\tau \\
	  &\leq C \| (\phi_0, \Psi_0) \|^2 + C \Big|\int_{0}^{\tau}\int_{\bbt_\varepsilon}\int_{\bbr} (2\mu + \lam)(\frac{-d_1\bar{u}_1 + d_2}{\tilde{\rho}})_{y_1}\psi_{1y_1} \\
	  &\quad - (\tilde{\rho} \tilde{u}_1^2 - \bar{\rho} \bar{u}_1^2 + \bar{u}_1^2d_1 - 2\bar{u}_1d_2)_{y_1}\psi_1 - (p(\tilde{\rho}) - p(\bar{\rho}) - p'(\bar{\rho})d_1)_{y_1}\psi_1 - (2\mu + \lam)\frac{\bar{u}_{1y_1y_1}}{\tilde{\rho}}\phi\psi_1 \\
	  &\quad - \frac{(\tilde{\rho} \tilde{u}_1^2 - \bar{\rho} \bar{u}_1^2 + \bar{u}_1^2d_1 - 2\bar{u}_1d_2)_{y_1}}{\tilde{\rho}}\phi\psi_1 - \frac{(p(\tilde{\rho}) - p(\bar{\rho}) - p'(\bar{\rho})d_1)_{y_1}}{\tilde{\rho}}\phi\psi_1 \\
	  &\quad - (\frac{-d_1\bar{u}_1 + d_2}{\tilde{\rho}})_{y_1}(p(\rho) - p(\tilde{\rho}) - p'(\tilde{\rho})\phi) - \rho(\frac{-d_1\bar{u}_1 + d_2}{\tilde{\rho}})_{y_1}\psi_1^2 dy_1dy_2d\tau \Big|,
	\end{aligned}
  \end{align}
where we have used the integration by parts
\[
\begin{array}{ll}
\di \int_{0}^{\tau}\int_{\bbt_\varepsilon}\int_{\bbr} (2\mu + \lam)(\frac{-d_1\bar{u}_1 + d_2}{\tilde{\rho}})_{y_1y_1}\psi_1 dy_1dy_2d\tau
\\ [4mm]
\di  = -\int_{0}^{\tau}\int_{\bbt_\varepsilon}\int_{\bbr} (2\mu + \lam)(\frac{-d_1\bar{u}_1 + d_2}{\tilde{\rho}})_{y_1}\psi_{1y_1} dy_1dy_2d\tau.
 \end{array}
\]
By Young's inequality, Lemma \ref{lemma2.2} and Lemma \ref{lemma2.3}, one has
  \begin{align*}
    &C \Big|\int_{0}^{\tau}\int_{\bbt_\varepsilon}\int_{\bbr} (2\mu + \lam)(\frac{-d_1\bar{u}_1 + d_2}{\tilde{\rho}})_{y_1}\psi_{1y_1} dy_1dy_2d\tau\Big| \\
    &\leq \frac{1}{16}\int_{0}^{\tau} \|\psi_{1y_1}\|^2 d\tau + C \int_{0}^{\tau}\int_{\bbt_\varepsilon}\int_{\bbr} |(\frac{-d_1\bar{u}_1 + d_2}{\tilde{\rho}})_{y_1}|^2 dy_1dy_2d\tau \\
    &\leq \frac{1}{16}\int_{0}^{\tau} \|\psi_{1y_1}\|^2 d\tau + C \varepsilon^{-1} \int_{0}^{t}\int_{\bbr} |(\frac{-d_1\bar{u}_1 + d_2}{\tilde{\rho}})_{x_1}|^2 dx_1dt \\
    &\leq \frac{1}{16}\int_{0}^{\tau} \|\psi_{1y_1}\|^2 d\tau + C_T \frac{\varepsilon}{\delta^4} + C_T \frac{\varepsilon^3}{\delta^7},
  \end{align*}
where we have used the following facts
  \[
  \int_{0}^{t}\int_{\bbr} |(\frac{d_{1x_1}\bar{u}_1}{\tilde{\rho}})|^2 dx_1dt \leq C \int_{0}^{t}\int_{\bbr} |d_{1x_1}|^2 dx_1dt \leq C_T \frac{\varepsilon^2}{\delta^4},
  \]
  \[
  \int_{0}^{t}\int_{\bbr} |(\frac{d_1\bar{u}_{1x_1}}{\tilde{\rho}})|^2 dx_1dt \leq C \int_{0}^{t} \|d_1\|_{L_{x_1}^2}^2 \|\bar{u}_{1x_1}\|_{L_{x_1}^{\infty}}^2 dt \leq C_T \frac{\varepsilon^2}{\delta^4},
  \]
  and
  \[
  \int_{0}^{t}\int_{\bbr} |(\frac{d_1\bar{u}_1d_{1x_1}}{\tilde{\rho}^2})|^2 dx_1dt \leq C \int_{0}^{t} \|d_1\|_{L_{x_1}^{\infty}}^2 \|d_{1x_1}\|_{L_{x_1}^2}^2 dt \leq C_T \frac{\varepsilon^4}{\delta^7}.
  \]
By the one-dimensional Sobolev's inequality, Young's inequality, Lemma \ref{lemma2.2} and Lemma \ref{lemma2.3}, it holds that
\begin{align*}
&C \Big|\int_{0}^{\tau}\int_{\bbt_\varepsilon}\int_{\bbr} (\tilde{\rho} \tilde{u}_1^2 - \bar{\rho} \bar{u}_1^2 + \bar{u}_1^2d_1 - 2\bar{u}_1d_2)_{y_1}\psi_1 dy_1dy_2d\tau\Big| \\
&= C \Big|\int_{0}^{\tau}\int_{\bbt_\varepsilon}\int_{\bbr} (\frac{(d_1\bar{u}_1 - d_2)^2}{\tilde{\rho}})_{y_1}\psi_1 dy_1dy_2d\tau\Big| \\
&\leq C \int_{0}^{\tau}\int_{\bbt_\varepsilon} \|(\frac{(d_1\bar{u}_1 - d_2)^2}{\tilde{\rho}})_{y_1}\|_{L_{y_1}^1}\|\psi_1\|_{L_{y_1}^\infty} dy_2d\tau \\
&\leq C \int_{0}^{\tau}\int_{\bbt_\varepsilon}\|(\frac{(d_1\bar{u}_1 - d_2)^2}{\tilde{\rho}})_{y_1}\|_{L_{y_1}^1} \|\psi_1\|_{L_{y_1}^2}^{1/2}\|\psi_{1y_1}\|_{L_{y_1}^2}^{1/2} dy_2d\tau \\
&\leq  \frac{1}{16}\int_{0}^{\tau} \|\psi_{1y_1}\|^2 d\tau + C \int_{0}^{\tau} \|(\frac{(d_1\bar{u}_1 - d_2)^2}{\tilde{\rho}})_{y_1}\|_{L_{y_1}^1}^{4/3} \int_{\bbt_\varepsilon} \|\psi_1\|_{L_{y_1}^2}^{2/3} dy_2d\tau \\
&\leq  \frac{1}{16}\int_{0}^{\tau} \|\psi_{1y_1}\|^2 d\tau + C \varepsilon^{-2/3}\int_{0}^{\tau} \|(\frac{(d_1\bar{u}_1 - d_2)^2}{\tilde{\rho}})_{y_1}\|_{L_{y_1}^1}^{4/3} \|\psi_1\|^{2/3} d\tau \\
&\leq  \frac{1}{16}\int_{0}^{\tau} \|\psi_{1y_1}\|^2 d\tau + C_T \varepsilon^{-5/3} \sup_{0 \leq t \leq T}\|(\frac{(d_1\bar{u}_1 - d_2)^2}{\tilde{\rho}})_{x_1}\|_{L_{x_1}^1}^{4/3} \sup_{0 \leq \tau \leq \tau_1(\varepsilon)}\|\psi_1\|^{2/3} \\
&\leq  \frac{1}{16}\int_{0}^{\tau} \|\psi_{1y_1}\|^2 d\tau + \frac{1}{16}\sup_{0 \leq \tau \leq \tau_1(\varepsilon)}\|\psi_1\|^2 + C_T \varepsilon^{-5/2} \sup_{0 \leq t \leq T}\|(\frac{(d_1\bar{u}_1 - d_2)^2}{\tilde{\rho}})_{x_1}\|_{L_{x_1}^1}^2 \\
&\leq  \frac{1}{16}\int_{0}^{\tau} \|\psi_{1y_1}\|^2 d\tau + \frac{1}{16}\sup_{0 \leq \tau \leq \tau_1(\varepsilon)}\|\psi_1\|^2 + C_T \frac{\varepsilon^{3/2}}{\delta^6},
\end{align*}
where we have used the following facts
\[
\|\frac{\bar{u}_1^2}{\tilde{\rho}^2}d_1^2\bar{\rho}_{x_1}\|_{L_{x_1}^1}\leq C \|\bar{\rho}_{x_1}\|_{L_{x_1}^\infty}\|d_1\|_{L_{x_1}^2}^2 \leq C_T \frac{\varepsilon^2}{\delta^3},
\]
\[
\|\frac{\bar{u}_1^2}{\tilde{\rho}^2}d_1^2d_{1x_1}\|_{L_{x_1}^1} \leq C \|d_1\|_{L_{x_1}^\infty}\|d_1\|_{L_{x_1}^2}\|d_{1x_1}\|_{L_{x_1}^2} \leq C_T \frac{\varepsilon^3}{\delta^{9/2}},
\]
and
\[
\|\frac{2\bar{u}_1^2}{\tilde{\rho}}d_1d_{1x_1}\|_{L_{x_1}^1} \leq C \|d_1\|_{L_{x_1}^2}\|d_{1x_1}\|_{L_{x_1}^2} \leq C_T \frac{\varepsilon^2}{\delta^3}.
\]
Using  the   Sobolev  inequality, H\"{o}lder  inequality, Young  inequality, and Lemma \ref{lemma2.2}, one has
\begin{align*}
&C \Big|\int_{0}^{\tau}\int_{\bbt_\varepsilon}\int_{\bbr} (2\mu + \lam)\frac{\bar{u}_{1y_1y_1}}{\tilde{\rho}}\phi\psi_1 dy_1dy_2d\tau\Big| \\
&\leq C \int_{0}^{\tau}\int_{\bbt_\varepsilon} \|\bar{u}_{1y_1y_1}\phi\|_{L_{y_1}^1} \|\psi_1\|_{L_{y_1}^\infty} dy_2d\tau \\
&\leq C \int_{0}^{\tau}\int_{\bbt_\varepsilon} \|\bar{u}_{1y_1y_1}\|_{L_{y_1}^2} \|\phi\|_{L_{y_1}^2} \|\psi_1\|_{L_{y_1}^2}^{1/2}\|\psi_{1y_1}\|_{L_{y_1}^2}^{1/2} dy_2d\tau \\
&\leq  \frac{1}{16}\int_{0}^{\tau} \|\psi_{1y_1}\|^2 d\tau + C \int_{0}^{\tau} \|\bar{u}_{1y_1y_1}\|_{L_{y_1}^2}^{4/3} \int_{\bbt_\varepsilon}  \|\phi\|_{L_{y_1}^2}^{4/3} \|\psi_1\|_{L_{y_1}^2}^{2/3} dy_2d\tau \\
&\leq  \frac{1}{16}\int_{0}^{\tau} \|\psi_{1y_1}\|^2 d\tau + C \int_{0}^{\tau} \|\bar{u}_{1y_1y_1}\|_{L_{y_1}^2}^{4/3} (\|\phi\|^2 + \|\psi_1\|^2) d\tau \\
&\leq  \frac{1}{16}\int_{0}^{\tau} \|\psi_{1y_1}\|^2 d\tau + C \sup_{0 \leq \tau \leq \tau_1(\varepsilon)}(\|\phi\|^2 + \|\psi_1\|^2) \int_{0}^{\tau} \|\bar{u}_{1y_1y_1}\|_{L_{y_1}^2}^{4/3} d\tau \\
&\leq  \frac{1}{16}\int_{0}^{\tau} \|\psi_{1y_1}\|^2 d\tau + C \varepsilon \sup_{0 \leq \tau \leq \tau_1(\varepsilon)}(\|\phi\|^2 + \|\psi_1\|^2) \int_{0}^{t} \|\bar{u}_{1x_1x_1}\|_{L_{x_1}^2}^{4/3} dt \\
&\leq  \frac{1}{16}\int_{0}^{\tau} \|\psi_{1y_1}\|^2 d\tau + C \frac{\varepsilon}{\delta} \sup_{0 \leq \tau \leq \tau_1(\varepsilon)}(\|\phi\|^2 + \|\psi_1\|^2).
\end{align*}
It follows from Lemma \ref{lemma2.2} and Lemma \ref{lemma2.3} that
\begin{align*}
&C \Big|\int_{0}^{\tau}\int_{\bbt_\varepsilon}\int_{\bbr} (\frac{-d_1\bar{u}_1 + d_2}{\tilde{\rho}})_{y_1}(p(\rho) - p(\tilde{\rho}) - p'(\tilde{\rho})\phi) dy_1dy_2d\tau\Big| \\
&\leq C \sup_{0 \leq \tau \leq \tau_1(\varepsilon)}\|(\frac{-d_1\bar{u}_1 + d_2}{\tilde{\rho}})_{y_1}\|_{L_{y_1}^\infty} \int_{0}^{\tau}\int_{\bbt_\varepsilon}\int_{\bbr} \phi^2 dy_1dy_2d\tau \\
&\leq C_T \sup_{0 \leq t \leq T}\|(\frac{-d_1\bar{u}_1 + d_2}{\tilde{\rho}})_{x_1}\|_{L_{x_1}^\infty} \sup_{0 \leq \tau \leq \tau_1(\varepsilon)} \|\phi\|^2 \\
&\leq C_T (\frac{\varepsilon}{\delta^{5/2}} + \frac{\varepsilon^2}{\delta^4}) \sup_{0 \leq \tau \leq \tau_1(\varepsilon)} \|\phi\|^2.
\end{align*}
The other terms in \eqref{03} can be estimated similarly and the details will be omitted for brevity.
Substituting these estimates into \eqref{03} and taking $\frac{\varepsilon}{\delta^4}$ and $\varepsilon$ suitably small, we can prove \eqref{LEM4.1} in Lemma \ref{lemma4.1}.
\begin{remark}
It should be remarked that the hyperbolic wave $(d_1,d_2)$ are crucially used in Lemma \ref{lemma4.1}, otherwise, the estimate \eqref{LEM4.1} in Lemma \ref{lemma4.1} would not be uniform in $\varepsilon$  if we just use the approximate rarefaction wave $(\bar\rho, \bar u)$ as the ansatz, which is quite different from the vanishing viscosity limit to the rarefaction wave for compressible Navier-Stokes equations in the one-dimensional case where the hyperbolic wave is not needed to justify the limit process. 
\end{remark}
\end{proof}

\begin{lemma} \label{lemma4.2}
  There exists a positive constant $C_T$ such that for $0 \leq \tau \leq \tau_1(\varepsilon)$,
  \begin{align}
	\begin{aligned} \label{LEM4.2}
 	  &\sup_{0 \leq \tau \leq \tau_1(\varepsilon)} (\| (\phi, \Psi)(\tau) \|^2 + \|\nabla\phi(\tau)\|^2) + \int_{0}^{\tau_1(\varepsilon)}\big[ \|\bar{u}_{1y_1}^{1/2} (\phi,  \psi_1) \|^2 + \|(\nabla\phi, \nabla\Psi)\|^2 \big] d\tau \\
 	  &\leq C_T \frac{\varepsilon}{\delta^4} + C (\| (\phi_0, \Psi_0) \|^2 + \|\nabla\phi_0\|^2) + C E^2 \int_{0}^{\tau_1(\varepsilon)} \|\nabla^2\Psi\|^2 d\tau.
	\end{aligned}
  \end{align}
\end{lemma}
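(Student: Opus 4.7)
The goal is to extend Lemma 4.1 by controlling $\|\nabla\phi\|^2$ as well. Since $\eqref{REF}_1$ is transport-type and provides no direct dissipation of $\phi$, the dissipation $\|\nabla\phi\|^2$ must be extracted from the pressure gradient of the momentum equation through a Matsumura--Nishida type device. My plan is to introduce the auxiliary functional
\[
\mathcal{F}(\tau):=\int_{\bbr\times\bbt_\varepsilon}\rho\,\Psi\cdot\nabla\phi\,dy
\]
and compute $\mathcal{F}'(\tau)=\int(\rho\Psi)_\tau\cdot\nabla\phi\,dy+\int\rho\Psi\cdot\nabla\phi_\tau\,dy$. Substituting the momentum equation $\eqref{REF}_2$ into the first integral, the pressure term $p'(\rho)\nabla\phi$ yields the coercive quantity $\int p'(\rho)|\nabla\phi|^2\,dy\ge c\|\nabla\phi\|^2$ by the uniform density bound \eqref{db}. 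The second integral, after integrating by parts and eliminating $\phi_\tau$ via $\eqref{REF}_1$, produces $\int\rho(\div\Psi)^2\,dy$ plus lower-order terms that are absorbed into $\|\nabla\Psi\|^2$ (already controlled by Lemma 4.1) and into the profile errors.

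The remaining terms are handled as follows. The linear convective term $\rho\tilde u_1\,\partial_{y_1}\Psi\cdot\nabla\phi$ yields $C\|\nabla\Psi\|\|\nabla\phi\|$, absorbed via Young's inequality. The quadratic convective term $\rho(\Psi\cdot\nabla)\Psi\cdot\nabla\phi$ is bounded by $C\|\Psi\|_{L^\infty}\|\nabla\Psi\|\|\nabla\phi\|\le CE\|\nabla\Psi\|\|\nabla\phi\|$ by the Sobolev embedding and \eqref{PA}, and is likewise absorbed. The viscous contribution simplifies via the identity $\int\Delta\Psi\cdot\nabla\phi=\int\nabla\div\Psi\cdot\nabla\phi$ (valid because $\Delta\Psi-\nabla\div\Psi$ is a curl and is $L^2$-orthogonal to every gradient) to $(2\mu+\lam)\int\nabla\div\Psi\cdot\nabla\phi$; a careful cycling of integrations by parts combined with the a priori smallness \eqref{PA} then delivers a bound of the form $\eta\|\nabla\phi\|^2+CE^2\|\nabla^2\Psi\|^2$, the small prefactor on $\|\nabla^2\Psi\|^2$ arising because the linear-in-$\phi$ part of the viscous term is reabsorbed into the coercive $p'(\rho)|\nabla\phi|^2$ dissipation, leaving only the quadratic remainder to carry the second derivative. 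The profile forcing terms involving $\bar u_{1y_1y_1}$ and the hyperbolic wave $(d_1,d_2)$ contribute at most $C_T\varepsilon/\delta^4$ by Lemmas 2.2 and 2.3, exactly as in Lemma 4.1.

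Integrating $\mathcal{F}'(\tau)$ over $[0,\tau]$ and bounding the endpoint via Young's inequality $|\mathcal{F}(\tau)|\le\eta\|\nabla\phi(\tau)\|^2+C_\eta\|\Psi(\tau)\|^2$ yields the time-integrated bound of $\|\nabla\phi\|^2$ and, after choosing $\eta$ small and combining with Lemma 4.1, the pointwise-in-$\tau$ supremum as well. As a parallel route that makes the pointwise bound more transparent, one can also differentiate $\eqref{REF}_1$ with respect to $y_j$, multiply by $\phi_{y_j}$, sum over $j=1,2$, and use the momentum equation to trade the resulting high-order term $\rho\nabla\div\Psi\cdot\nabla\phi$ again for the coercive $p'(\rho)|\nabla\phi|^2$, with the same bookkeeping on profile and nonlinear remainders. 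Combining these ingredients with Lemma 4.1 produces \eqref{LEM4.2}. The main obstacle throughout is to confine the high-derivative viscous contribution to the form $CE^2\|\nabla^2\Psi\|^2$ rather than a clean $C\|\nabla^2\Psi\|^2$; this requires the reorganization described above and relies essentially on the smallness of the perturbation measured by $E$.
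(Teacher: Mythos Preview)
Your overall strategy—pair the momentum equation against $\nabla\phi$ to extract the coercive $p'(\rho)|\nabla\phi|^2$ and combine with Lemma~\ref{lemma4.1}—is correct, but the treatment of the viscous term contains a real gap. After your curl-orthogonality identity, the viscous contribution is $(2\mu+\lam)\int\nabla\div\Psi\cdot\nabla\phi$, which is \emph{linear} in the perturbation; no amount of integration by parts or invocation of \eqref{PA} can attach an $E$-factor to it. A direct Young estimate yields only $\eta\|\nabla\phi\|^2+C_\eta\|\nabla^2\Psi\|^2$ with no $E^2$, which does not match \eqref{LEM4.2} and would break the closure at Lemma~\ref{lemma4.3}. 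Your sentence about ``the linear-in-$\phi$ part being reabsorbed into the coercive dissipation, leaving only a quadratic remainder'' does not correspond to an actual mechanism.

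The paper handles this by an exact cancellation, which is what your ``parallel route'' is reaching toward but does not quite articulate. One applies $\nabla$ to $\eqref{REF}_1$ and multiplies by $\nabla\phi/\rho^2$, obtaining on the left both $(\tfrac{|\nabla\phi|^2}{2\rho^2})_\tau$ and $+\tfrac{1}{\rho}\nabla\div\Psi\cdot\nabla\phi$. Separately, one multiplies $\eqref{REF}_2$ by $\nabla\phi/\rho$, obtaining on the left $\tfrac{p'(\rho)}{\rho}|\nabla\phi|^2$ together with $-\tfrac{2\mu+\lam}{\rho}\nabla\div\Psi\cdot\nabla\phi$. Multiplying the first identity by $(2\mu+\lam)$ and adding, the two $\nabla\div\Psi\cdot\nabla\phi$ contributions cancel \emph{exactly}; this is the structural observation from \cite{LW}. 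The term $CE^2\int\|\nabla^2\Psi\|^2$ in \eqref{LEM4.2} then arises solely from genuinely quadratic remainders such as $\int\phi\,\nabla\psi_i\cdot\Psi_{y_i}$ and $\int\phi_{y_i}\nabla\phi\cdot\nabla\psi_i$, bounded via $\|\cdot\|_{L^4}\le C\|\cdot\|_{H^1}$. Your functional $\mathcal F$ captures only the second identity; without simultaneously carrying the weighted energy $(2\mu+\lam)\tfrac{|\nabla\phi|^2}{2\rho^2}$ from the differentiated continuity equation, the linear viscous term does not disappear and the argument cannot close with the $E^2$ prefactor the lemma demands.
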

\begin{proof}
  Applying the operator $\nabla$ to the first equation of \eqref{REF} and then multiplying the resulting equation by $\frac{\nabla\phi}{\rho^2}$ yield
 \begin{align}
 \begin{aligned} \label{11}
 &(\frac{|\nabla\phi|^2}{2\rho^2})_\tau + div(\frac{\textbf{u} |\nabla\phi|^2}{2\rho^2}) + \frac{\nabla{\rm div} \Psi\cdot \nabla\phi}{\rho}\\
 &= - \frac{\phi_{y_i} \nabla\phi\cdot\nabla\psi_i}{\rho^2} + \frac{|\nabla\phi|^2 div\Psi}{2\rho^2} - \frac{\tilde{\rho}_{y_1}\phi_{y_1} div\Psi}{\rho^2} - \frac{\tilde{\rho}_{y_1} \nabla\phi\cdot\nabla\psi_1}{\rho^2} - \frac{\tilde{u}_{1y_1} \phi_{y_1}^2}{\rho^2} \\
 &\quad + \frac{\tilde{u}_{1y_1}|\nabla\phi|^2}{2\rho^2} - \frac{\tilde{\rho}_{y_1y_1}\psi_1\phi_{y_1}}{\rho^2} - \frac{\tilde{u}_{1y_1y_1}\phi \phi_{y_1}}{\rho^2} \\
 &:= G(\tau, y_1, y_2).
 \end{aligned}
 \end{align}
 Multiplying the second equation of \eqref{REF} by $\frac{\nabla\phi}{\rho}$ gives
  \begin{align}
    \begin{aligned} \label{12}
      &(\Psi \cdot \nabla\phi)_\tau - div(\Psi\phi_\tau) + div(u_i\phi\Psi_{y_i}) - div(\textbf{u}\phi div\Psi) - div(\frac{\mu}{\rho}\nabla\psi_i\phi_{y_i}) \\
      &\quad + (\frac{\mu}{\rho}\nabla\psi_i\cdot\nabla\phi)_{y_i} + \frac{p'(\rho)}{\rho}|\nabla\phi|^2 - \frac{(2\mu + \lam)\nabla div\Psi\cdot\nabla\phi}{\rho} \\
      &= \tilde{\rho}(div\Psi)^2 + \phi\nabla\psi_i\cdot\Psi_{y_i} + \tilde{\rho}_{y_1}\psi_1div\Psi + \tilde{u}_{1y_1}\phi\psi_{1y_1} - \tilde{u}_{1y_1}\psi_1\phi_{y_1}  \\
      &\quad - (\frac{p'(\rho)}{\rho} - \frac{p'(\tilde{\rho})}{\tilde{\rho}})\tilde{\rho}_{y_1}\phi_{y_1}+ \frac{\mu}{\rho^2}\tilde{\rho}_{y_1}\Psi_{y_1}\cdot\nabla\phi - \frac{\mu}{\rho^2}\tilde{\rho}_{y_1}\nabla\psi_1\cdot\nabla\phi   \\
      &\quad + \frac{2\mu + \lam}{\rho}(\frac{-d_1\bar{u}_1 + d_2}{\tilde{\rho}})_{y_1y_1}\phi_{y_1}- \frac{(\tilde{\rho} \tilde{u}_1^2 - \bar{\rho} \bar{u}_1^2 + \bar{u}_1^2d_1 - 2\bar{u}_1d_2)_{y_1}}{\rho}\phi_{y_1}\\
      &\quad\di - \frac{(p(\tilde{\rho}) - p(\bar{\rho}) - p'(\bar{\rho})d_1)_{y_1}}{\rho}\phi_{y_1} - \frac{2\mu + \lam}{\tilde{\rho}\rho}\bar{u}_{1y_1y_1}\phi\phi_{y_1} \\
      &\quad - \frac{(\tilde{\rho} \tilde{u}_1^2 - \bar{\rho} \bar{u}_1^2 + \bar{u}_1^2d_1 - 2\bar{u}_1d_2)_{y_1}}{\tilde{\rho}\rho}\phi\phi_{y_1} - \frac{(p(\tilde{\rho}) - p(\bar{\rho}) - p'(\bar{\rho})d_1)_{y_1}}{\tilde{\rho}\rho}\phi\phi_{y_1} \\
      &:= \tilde{\rho}(div\Psi)^2 + H(\tau, y_1, y_2),
    \end{aligned}
  \end{align}
here we have used the fact
\begin{equation*}
\begin{array}{ll}
\di \frac{\mu\triangle\Psi\cdot\nabla\phi}{\rho} = div(\frac{\mu}{\rho}\nabla\psi_i\phi_{y_i}) - (\frac{\mu}{\rho}\nabla\psi_i\cdot\nabla\phi)_{y_i} \\[3mm]
\di\qquad\qquad\qquad + \frac{\mu\nabla div\Psi\cdot\nabla\phi}{\rho} + \frac{\mu}{\rho^2}\tilde{\rho}_{y_1}\Psi_{y_1}\cdot\nabla\phi - \frac{\mu}{\rho^2}\tilde{\rho}_{y_1}\nabla\psi_1\cdot\nabla\phi.
\end{array}
\end{equation*}
We note that some cancellations will occur to the last terms on the left hand side of both \eqref{11} and \eqref{12}  when we multiply   \eqref{11} by $2\mu + \lam$ and then add them together as in \cite{LW}.
We shall  use  the cancellations to close the a priori estimates.
 
Thus we multiply   \eqref{11} by $2\mu + \lam$,   add  the resulting equation and \eqref{12} together,   then integrate the final equation over $[0, ~\tau]\times\bbr\times\bbt_\varepsilon$ to obtain
  \begin{align}
    \begin{aligned} \label{13}
      &\int_{\bbt_\varepsilon}\int_{\bbr} \big(\frac{2\mu + \lam}{2\rho^2}|\nabla\phi|^2 + \Psi\cdot\nabla\phi \big) dy_1dy_2\big|_0^\tau + \int_{0}^{\tau}\int_{\bbt_\varepsilon}\int_{\bbr} \frac{p'(\rho)}{\rho}|\nabla\phi|^2 dy_1dy_2d\tau \\
      &= \int_{0}^{\tau}\int_{\bbt_\varepsilon}\int_{\bbr} \Big[(2\mu + \lam)G(\tau, y_1, y_2) + \tilde{\rho}(div\Psi)^2 + H(\tau, y_1, y_2) \Big] dy_1dy_2d\tau.
    \end{aligned}
  \end{align}
  Combining  \eqref{LEM4.1} and \eqref{13} leads to  
  \begin{align}
    \begin{aligned} \label{14}
      &\| (\phi, \Psi)(\tau) \|^2 + \|\nabla\phi(\tau)\|^2 + \int_{0}^{\tau}\big[ \|\bar{u}_{1y_1}^{1/2} (\phi,  \psi_1) \|^2 + \|(\nabla\phi, \nabla\Psi)\|^2 \big] d\tau \\
      &\leq C_T \frac{\varepsilon}{\delta^4} + C (\| (\phi_0, \Psi_0) \|^2 + \|\nabla\phi_0\|^2) \\
      &\quad + C\big|\int_{0}^{\tau}\int_{\bbt_\varepsilon}\int_{\bbr} \Big[(2\mu + \lam)G(\tau, y_1, y_2) + H(\tau, y_1, y_2) \Big] dy_1dy_2d\tau\big|,
    \end{aligned}
  \end{align}
  where $G$ and $H$ are defined in \eqref{11} and \eqref{12}, respectively.
  Here we just estimate some typical terms on the right-hand side of \eqref{14} in $G$ and $H$ for simplicity. First, by H\"{o}lder's inequality, Sobolev's inequality and Young's inequality, it holds that
  \begin{align*}
    &C\Big|\int_{0}^{\tau}\int_{\bbt_\varepsilon}\int_{\bbr} \frac{2\mu + \lam}{\rho^2}\phi_{y_i} \nabla\phi \cdot \nabla\psi_i dy_1dy_2d\tau\Big| \\
    &\leq C \int_{0}^{\tau} \|\nabla\phi\|\|\nabla\phi\|_{L^4}\|\nabla\Psi\|_{L^4} d\tau \leq C \int_{0}^{\tau} \|\nabla\phi\|\|\nabla\phi\|_1\|\nabla\Psi\|_1 d\tau \\
    & \leq C E \int_{0}^{\tau} \|\nabla\phi\|\|\nabla\Psi\|_1 d\tau \leq \frac{1}{160} \int_{0}^{\tau} \|\nabla\phi\|^2 d\tau + CE^2 \int_{0}^{\tau} \|\nabla\Psi\|_1^2 d\tau,
  \end{align*}
where in the second inequality we have used Sobolev imbedding $\|f\|_{L^4(\mathbb{R}\times\mathbb{T}_\varepsilon)}\leq C\|f\|_{H^1(\mathbb{R}\times\mathbb{T}_\varepsilon)}$ with the imbedding constant $C$ independent of $\varepsilon$.
Then it follows from Young's inequality, Lemma \ref{lemma2.2} and Lemma \ref{lemma2.3} that
  \begin{align*}
  &C\Big|\int_{0}^{\tau}\int_{\bbt_\varepsilon}\int_{\bbr} \frac{2\mu + \lam}{\rho^2} \tilde{\rho}_{y_1}\phi_{y_1} div\Psi dy_1dy_2d\tau\Big|
  \leq \frac{1}{160} \int_{0}^{\tau} \|\nabla\Psi\|^2 d\tau + C \int_{0}^{\tau} \|\tilde{\rho}_{y_1}\phi_{y_1}\|^2 d\tau \\
  &\leq \frac{1}{160} \int_{0}^{\tau} \|\nabla\Psi\|^2 d\tau + C (\|\bar{\rho}_{y_1}\|_{L_{y_1}^\infty}^2 + \|d_{1y_1}\|_{L_{y_1}^\infty}^2) \int_{0}^{\tau} \|\phi_{y_1}\|^2 d\tau \\
  &\leq \frac{1}{160} \int_{0}^{\tau} \|\nabla\Psi\|^2 d\tau + C_T (\frac{\varepsilon^2}{\delta^2} + \frac{\varepsilon^4}{\delta^5}) \int_{0}^{\tau} \|\phi_{y_1}\|^2 d\tau.
  \end{align*}
  Similarly, it holds that
  \begin{align*}
  &C\Big|\int_{0}^{\tau}\int_{\bbt_\varepsilon}\int_{\bbr} \frac{2\mu + \lam}{\rho^2}\tilde{\rho}_{y_1y_1}\psi_1\phi_{y_1} dy_1dy_2d\tau\Big| \\
  &\leq \frac{1}{160} \int_{0}^{\tau} \|\phi_{y_1}\|^2 d\tau + C \int_{0}^{\tau}\int_{\bbt_\varepsilon}\int_{\bbr} |\tilde{\rho}_{y_1y_1}\psi_1|^2 dy_1dy_2d\tau \\
  &\leq \frac{1}{160} \int_{0}^{\tau} \|\phi_{y_1}\|^2 d\tau + C \int_{0}^{\tau} \|\tilde{\rho}_{y_1y_1}\|_{L_{y_1}^\infty}^2\|\psi_1\|^2 d\tau \\
  &\leq \frac{1}{160} \int_{0}^{\tau} \|\phi_{y_1}\|^2 d\tau + C_T \varepsilon^3 \sup_{0 \leq t \leq T} \|\tilde{\rho}_{x_1x_1}\|_{L_{x_1}^\infty}^2  \sup_{0 \leq \tau \leq \tau_1(\varepsilon)} \|\psi_1\|^2 \\
  &\leq \frac{1}{160} \int_{0}^{\tau} \|\phi_{y_1}\|^2 d\tau + C_T \varepsilon^3 \sup_{0 \leq t \leq T} (\|\bar{\rho}_{x_1x_1}\|_{L_{x_1}^\infty}^2 + \|d_{1x_1x_1}\|_{L_{x_1}^\infty}^2) \sup_{0 \leq \tau \leq \tau_1(\varepsilon)} \|\psi_1\|^2 \\
  & \leq \frac{1}{160} \int_{0}^{\tau} \|\phi_{y_1}\|^2 d\tau + C_T (\frac{\varepsilon^3}{\delta^4} + \frac{\varepsilon^5}{\delta^7}) \sup_{0 \leq \tau \leq \tau_1(\varepsilon)} \|\psi_1\|^2.
  \end{align*}
  By H\"{o}lder's inequality, Sobolev's inequality and Young's inequality, it holds that
  \begin{align*}
  &C\Big|\int_{0}^{\tau}\int_{\bbt_\varepsilon}\int_{\bbr} \phi \nabla\psi_i \cdot \Psi_{y_i} dy_1dy_2d\tau\Big| \\
  &\leq C \int_{0}^{\tau} \|\nabla\Psi\|\|\phi\|_{L^4}\|\nabla\Psi\|_{L^4} d\tau \leq C \int_{0}^{\tau} \|\nabla\Psi\|\|\phi\|_1\|\nabla\Psi\|_1 d\tau \\
  & \leq C E \int_{0}^{\tau} \|\nabla\Psi\|\|\nabla\Psi\|_1 d\tau \leq \frac{1}{160} \int_{0}^{\tau} \|\nabla\Psi\|^2 d\tau + CE^2 \int_{0}^{\tau} \|\nabla\Psi\|_1^2 d\tau.
  \end{align*}
  By Young's inequality, Lemma \ref{lemma2.2} and Lemma \ref{lemma2.3}, one has
  \begin{align*}
   &C\Big|\int_{0}^{\tau}\int_{\bbt_\varepsilon}\int_{\bbr} \tilde{\rho}_{y_1}\psi_1div\Psi dy_1dy_2d\tau\Big| \\
   &\leq \frac{1}{160} \int_{0}^{\tau} \|\nabla\Psi\|^2 d\tau + C \int_{0}^{\tau}\int_{\bbt_\varepsilon}\int_{\bbr} |\tilde{\rho}_{y_1}\psi_1|^2 dy_1dy_2d\tau \\
   &\leq \frac{1}{160} \int_{0}^{\tau} \|\nabla\Psi\|^2 d\tau + C \int_{0}^{\tau} \|\tilde{\rho}_{y_1}\|_{L_{y_1}^\infty}^2\|\psi_1\|^2 d\tau \\
   &\leq \frac{1}{160} \int_{0}^{\tau} \|\nabla\Psi\|^2 d\tau + C_T \varepsilon \sup_{0 \leq t \leq T} \|\tilde{\rho}_{x_1}\|_{L_{x_1}^\infty}^2  \sup_{0 \leq \tau \leq \tau_1(\varepsilon)} \|\psi_1\|^2 \\
   &\leq \frac{1}{160} \int_{0}^{\tau} \|\nabla\Psi\|^2 d\tau + C_T \varepsilon \sup_{0 \leq t \leq T} (\|\bar{\rho}_{x_1}\|_{L_{x_1}^\infty}^2 + \|d_{1x_1}\|_{L_{x_1}^\infty}^2) \sup_{0 \leq \tau \leq \tau_1(\varepsilon)} \|\psi_1\|^2 \\
   & \leq \frac{1}{160} \int_{0}^{\tau} \|\nabla\Psi\|^2 d\tau + C_T (\frac{\varepsilon}{\delta^2} + \frac{\varepsilon^3}{\delta^5}) \sup_{0 \leq \tau \leq \tau_1(\varepsilon)} \|\psi_1\|^2.
  \end{align*}
  It follows from Young's inequality, Lemma \ref{lemma2.2} and Lemma \ref{lemma2.3} that
  \begin{align*}
  &C\Big|\int_{0}^{\tau}\int_{\bbt_\varepsilon}\int_{\bbr} \frac{2\mu + \lam}{\rho}(\frac{-d_1\bar{u}_1 + d_2}{\tilde{\rho}})_{y_1y_1}\phi_{y_1} dy_1dy_2d\tau\Big| \\
  &\leq \frac{1}{160} \int_{0}^{\tau} \|\phi_{y_1}\|^2 d\tau + C \int_{0}^{\tau}\int_{\bbt_\varepsilon}\int_{\bbr} |(\frac{-d_1\bar{u}_1 + d_2}{\tilde{\rho}})_{y_1y_1}|^2 dy_1dy_2d\tau \\
  &\leq \frac{1}{160} \int_{0}^{\tau} \|\phi_{y_1}\|^2 d\tau + C\varepsilon \int_{0}^{t}\int_{\bbr} |(\frac{-d_1\bar{u}_1 + d_2}{\tilde{\rho}})_{x_1x_1}|^2 dx_1dt \\
  &\leq \frac{1}{160} \int_{0}^{\tau} \|\phi_{y_1}\|^2 d\tau + C_T \frac{\varepsilon^3}{\delta^6},
  \end{align*}
  where we have used the following facts
  \[
  \int_{0}^{t}\int_{\bbr} |(\frac{\bar{u}_1}{\tilde{\rho}}d_{1x_1x_1})|^2 dx_1dt \leq C \int_{0}^{t}\int_{\bbr} |d_{1x_1x_1}|^2 dx_1dt \leq C_T \frac{\varepsilon^2}{\delta^6},
  \]
  \[
  \int_{0}^{t}\int_{\bbr} |(\frac{2d_{1x_1}\bar{u}_{1x_1}}{\tilde{\rho}})|^2 dx_1dt \leq C \int_{0}^{t} \|d_{1x_1}\|_{L_{x_1}^2}^2 \|\bar{u}_{1x_1}\|_{L_{x_1}^{\infty}}^2 dt \leq C_T \frac{\varepsilon^2}{\delta^6},
  \]
  \[
  \int_{0}^{t}\int_{\bbr} |(\frac{d_1\bar{u}_{1x_1x_1}}{\tilde{\rho}})|^2 dx_1dt \leq C \int_{0}^{t} \|d_1\|_{L_{x_1}^2}^2 \|\bar{u}_{1x_1x_1}\|_{L_{x_1}^{\infty}}^2 dt \leq C_T \frac{\varepsilon^2}{\delta^5},
  \]
  \[
  \int_{0}^{t}\int_{\bbr} |(\frac{2d_{1x_1}d_{2x_1}}{\tilde{\rho}^2})|^2 dx_1dt \leq C \int_{0}^{t} \|d_{1x_1}\|_{L_{x_1}^{\infty}}^2 \|d_{2x_1}\|_{L_{x_1}^2}^2 dt \leq C_T \frac{\varepsilon^4}{\delta^9},
  \]
  and
  \[
  \int_{0}^{t}\int_{\bbr} |(\frac{2\bar{u}_1d_1\bar{\rho}_{x_1}^2}{\tilde{\rho}^3})|^2 dx_1dt \leq C \int_{0}^{t} \|d_1\|_{L_{x_1}^2}^2 \|\bar{\rho}_{x_1}\|_{L_{x_1}^{\infty}}^4 dt \leq C_T \frac{\varepsilon^2}{\delta^5}.
  \]
  Similarly, we have
  \begin{align*}
  &C\Big|\int_{0}^{\tau}\int_{\bbt_\varepsilon}\int_{\bbr} \frac{(\tilde{\rho} \tilde{u}_1^2 - \bar{\rho} \bar{u}_1^2 + \bar{u}_1^2d_1 - 2\bar{u}_1d_2)_{y_1}}{\rho}\phi_{y_1} dy_1dy_2d\tau\Big| \\
  &\leq \frac{1}{160} \int_{0}^{\tau} \|\phi_{y_1}\|^2 d\tau + C \int_{0}^{\tau}\int_{\bbt_\varepsilon}\int_{\bbr} |(\tilde{\rho} \tilde{u}_1^2 - \bar{\rho} \bar{u}_1^2 + \bar{u}_1^2d_1 - 2\bar{u}_1d_2)_{y_1}|^2 dy_1dy_2d\tau \\
  &\leq \frac{1}{160} \int_{0}^{\tau} \|\phi_{y_1}\|^2 d\tau + C\varepsilon^{-1} \int_{0}^{t}\int_{\bbr} |(\frac{(d_1\bar{u}_1 - d_2)^2}{\tilde{\rho}})_{x_1}|^2 dx_1dt \\
  &\leq \frac{1}{160} \int_{0}^{\tau} \|\phi_{y_1}\|^2 d\tau + C_T \frac{\varepsilon^3}{\delta^7},
  \end{align*}
  where we have used the following facts
  \[
  \int_{0}^{t}\int_{\bbr} |(\frac{\bar{u}_1^2d_1^2\bar{\rho}_{x_1}}{\tilde{\rho}^2})|^2 dx_1dt \leq C \int_{0}^{t} \|d_1\|_{L_{x_1}^{\infty}}^2 \|\bar{\rho}_{x_1}\|_{L_{x_1}^{\infty}}^2 \|d_1\|_{L_{x_1}^2}^2 dt \leq C_T \frac{\varepsilon^4}{\delta^6},
  \]
  \[
  \int_{0}^{t}\int_{\bbr} |(\frac{\bar{u}_1^2d_1^2d_{1x_1}}{\tilde{\rho}^2})|^2 dx_1dt \leq C \int_{0}^{t} \|d_1\|_{L_{x_1}^{\infty}}^4 \|d_{1x_1}\|_{L_{x_1}^2}^2 dt \leq C_T \frac{\varepsilon^6}{\delta^{10}},
  \]
  and
  \[
  \int_{0}^{t}\int_{\bbr} |(\frac{2\bar{u}_1^2d_1d_{1x_1}}{\tilde{\rho}})|^2 dx_1dt \leq C \int_{0}^{t} \|d_1\|_{L_{x_1}^{\infty}}^2 \|d_{1x_1}\|_{L_{x_1}^2}^2 dt \leq C_T \frac{\varepsilon^4}{\delta^7}.
  \]
  The other terms in \eqref{14} can be analyzed similarly and the details will be omitted for brevity.
  Substituting these estimates into \eqref{14} and taking $\frac{\varepsilon}{\delta^4}$, $\varepsilon$ and $E$ suitably small, we can prove \eqref{LEM4.2} in Lemma \ref{lemma4.2}.
\end{proof}

\begin{lemma} \label{lemma4.3}
	There exists a positive constant $C_T$ such that for $0 \leq \tau \leq \tau_1(\varepsilon)$,
	\begin{align}
	\begin{aligned} \label{LEM4.3}
	&\sup_{0 \leq \tau \leq \tau_1(\varepsilon)} \| (\phi, \Psi)(\tau) \|_1^2 + \int_{0}^{\tau_1(\varepsilon)}\big[ \|\bar{u}_{1y_1}^{1/2} (\phi,  \psi_1) \|^2 + \|(\nabla\phi, \nabla\Psi)\|^2 + \|\nabla^2\Psi\|^2 \big] d\tau \\
	&\leq C_T \frac{\varepsilon}{\delta^4} + C \| (\phi_0, \Psi_0) \|_1^2.
	\end{aligned}
	\end{align}
\end{lemma}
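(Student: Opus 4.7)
The plan is to produce a first-order energy estimate for $\Psi$ that controls $\|\nabla \Psi\|$ in $L^\infty_\tau L^2$ and, crucially, $\|\nabla^2 \Psi\|$ in $L^2_\tau L^2$, and then to combine it with Lemma~\ref{lemma4.2} so that the term $C E^2 \int_0^{\tau_1(\varepsilon)} \|\nabla^2 \Psi\|^2\, d\tau$ on the right-hand side of \eqref{LEM4.2} is absorbed by the newly generated left-hand-side $\|\nabla^2 \Psi\|^2$ term under the smallness assumption $E \ll 1$. The natural multiplier for this purpose is $-\triangle \Psi$ applied to the momentum equation in \eqref{REF}.

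After multiplying the momentum equation by $-\triangle \Psi$ and integrating over $\bbr \times \bbt_\varepsilon$, the time derivative piece $\int \rho \Psi_\tau \cdot (-\triangle \Psi)\, dy$ produces, via integration by parts and the continuity equation, a term $\frac{1}{2}\frac{d}{d\tau}\int \rho |\nabla \Psi|^2\, dy$ modulo lower-order commutators involving $\nabla \rho$ and $\rho_\tau$. The principal viscous piece gives $\mu \|\triangle \Psi\|^2 + (\mu+\lam)\|\nabla \div \Psi\|^2$, bounded from below by $c \|\nabla^2 \Psi\|^2$ through a standard Lamé-type coercivity argument. The pressure contribution $\int p'(\rho) \nabla \phi \cdot (-\triangle \Psi)\, dy$ is dealt with by Cauchy's inequality, dumping $\|\nabla^2\Psi\|^2$ into the principal dissipation with a small coefficient and using the $\int_0^{\tau_1(\varepsilon)} \|\nabla \phi\|^2\, d\tau$ bound already granted by Lemma~\ref{lemma4.2}.

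For the nonlinear convection $\rho \textbf{u} \cdot \nabla \Psi$ and the cross-perturbation terms, I would rely on the $\varepsilon$-uniform Sobolev embedding $H^1(\bbr \times \bbt_\varepsilon) \hookrightarrow L^4(\bbr \times \bbt_\varepsilon)$ (highlighted after Proposition~\ref{proposition3.1}) in tandem with $E \ll 1$, so that these terms contribute at most $C E^2 \int \|\nabla^2\Psi\|^2\, d\tau$ plus quantities already controlled by Lemma~\ref{lemma4.2}. The rarefaction-wave source terms such as $\tilde{\rho}_{y_1}\psi_1$ and $\tilde{u}_{1y_1}\phi$ are estimated through Lemma~\ref{lemma2.2}, while the hyperbolic-wave source terms use Lemma~\ref{lemma2.3}; each derivative landing on a wave profile costs a factor $\delta^{-1}$, and the scaling $(t,x)\mapsto(\tau,y)$ introduces a compensating $\varepsilon^{1/2}$ in each spatial $L^2$ norm.

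The main obstacle is a careful bookkeeping of the highest-derivative wave error terms. When the viscous source $(2\mu+\lam)\varepsilon \bar{u}_{1y_1y_1}$ in the $\psi_1$ equation is paired with $-\triangle \Psi$ and integrated by parts once, it generates contributions of size $\varepsilon^2 \int_0^T \|\bar{u}_{1x_1x_1x_1}\|_{L^2}^2\, dt \lesssim C_T \varepsilon^2/\delta^3$, safely inside the target $C_T \varepsilon/\delta^4$. More delicate are the mixed terms involving products of $d_i$ with $\bar{u}_{1x_1x_1x_1}$ or $d_{ix_1x_1}$, which must be estimated through the $L^\infty$ bound of Lemma~\ref{lemma2.3} and Hölder's inequality in $x_1$ in order to prevent the resulting $\delta$-powers from exceeding four, exactly as in the error estimates of Lemmas~\ref{lemma4.1}--\ref{lemma4.2}. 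Once all error terms are of the form $C_T \varepsilon/\delta^4 + C\|(\phi_0,\Psi_0)\|_1^2 + \frac{1}{2}\int \|\nabla^2\Psi\|^2\, d\tau + CE^2 \int \|\nabla^2\Psi\|^2\, d\tau$, adding a large multiple of \eqref{LEM4.2} and using $E \ll 1$ closes \eqref{LEM4.3}.
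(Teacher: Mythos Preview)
Your proposal is correct and follows essentially the same approach as the paper: multiply the momentum equation by $-\triangle\Psi$ (the paper uses $-\triangle\Psi/\rho$, which makes the time-derivative term exactly $\frac12(|\nabla\Psi|^2)_\tau$ without the $\rho$-weighted commutators you anticipate, but this is a cosmetic difference), extract the dissipation $\frac{\mu}{\rho}|\triangle\Psi|^2+\frac{\mu+\lambda}{\rho}|\nabla\div\Psi|^2$, estimate the remainder terms via Young's inequality, the $H^1\hookrightarrow L^4$ embedding, and Lemmas~\ref{lemma2.2}--\ref{lemma2.3}, and then add the resulting estimate to \eqref{LEM4.2} to absorb the $CE^2\int\|\nabla^2\Psi\|^2\,d\tau$ term. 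One small inaccuracy: in the scaled perturbation system \eqref{REF} the raw viscous source $(2\mu+\lambda)\bar{u}_{1y_1y_1}$ has already been absorbed by the hyperbolic wave, so the actual source terms you must estimate are $(2\mu+\lambda)(\frac{-d_1\bar u_1+d_2}{\tilde\rho})_{y_1y_1}$ and $(2\mu+\lambda)\frac{\bar u_{1y_1y_1}}{\tilde\rho}\phi$, which the paper handles directly by Young's inequality without an extra integration by parts.
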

\begin{proof}
  Multiplying the second equation of \eqref{REF} by $-\triangle\Psi/\rho$ gives
  \begin{align}
    \begin{aligned} \label{15}
      &(\frac{|\nabla\Psi|^2}{2})_\tau - div(\psi_{i\tau}\nabla\psi_i + \frac{\mu + \lam}{\rho}div\Psi\nabla div\Psi - \frac{\mu + \lam}{\rho}div\Psi\triangle\Psi) \\
      &\quad + \frac{\mu}{\rho}|\triangle\Psi|^2 + \frac{\mu + \lam}{\rho}|\nabla div\Psi|^2= u_i\Psi_{y_i}\cdot\triangle\Psi + \frac{p'(\rho)}{\rho} \nabla\phi\cdot\triangle\Psi + \tilde{u}_{1y_1}\psi_1\triangle\psi_1 \\
      &\quad+ (\frac{p'(\rho)}{\rho}  - \frac{p'(\tilde{\rho})}{\tilde{\rho}}) \tilde{\rho}_{y_1} \triangle\psi_1
      - \frac{\mu + \lam}{\rho^2}div\Psi\nabla\phi\cdot\triangle\Psi + \frac{\mu + \lam}{\rho^2}div\Psi\nabla\phi\cdot\nabla div\Psi \\
      &\quad - \frac{\mu + \lam}{\rho^2}\tilde{\rho}_{y_1} div\Psi\triangle\psi_1 + \frac{\mu + \lam}{\rho^2}\tilde{\rho}_{y_1} div\Psi div\Psi_{y_1}- \frac{2\mu + \lam}{\rho}(\frac{-d_1\bar{u}_1 + d_2}{\tilde{\rho}})_{y_1y_1}\triangle\psi_1  \\
      &\quad + \frac{(\tilde{\rho} \tilde{u}_1^2 - \bar{\rho} \bar{u}_1^2 + \bar{u}_1^2d_1 - 2\bar{u}_1d_2)_{y_1}}{\rho}\triangle\psi_1 + \frac{(p(\tilde{\rho}) - p(\bar{\rho}) - p'(\bar{\rho})d_1)_{y_1}}{\rho}\triangle\psi_1 \\
      &\quad + \frac{2\mu + \lam}{\tilde{\rho}\rho}\bar{u}_{1y_1y_1}\phi\triangle\psi_1 + \frac{(\tilde{\rho} \tilde{u}_1^2 - \bar{\rho} \bar{u}_1^2 + \bar{u}_1^2d_1 - 2\bar{u}_1d_2)_{y_1}}{\tilde{\rho}\rho}\phi\triangle\psi_1 \\
      &\quad + \frac{(p(\tilde{\rho}) - p(\bar{\rho}) - p'(\bar{\rho})d_1)_{y_1}}{\tilde{\rho}\rho}\phi\triangle\psi_1 := K(\tau, y_1, y_2).
    \end{aligned}
  \end{align}
  Integrating the above equation over $[0, ~\tau]\times\bbr\times\bbt_\varepsilon$ yields
  \begin{align}
    \begin{aligned} \label{16}
      \|\nabla\Psi(\tau)\|^2 + \int_{0}^{\tau} \|\triangle\Psi\|^2 d\tau
      \leq C \|\nabla\Psi_0\|^2 + C |\int_{0}^{\tau}\int_{\bbt_\varepsilon}\int_{\bbr}  K(\tau, y_1, y_2) dy_1dy_2d\tau|.
    \end{aligned}
  \end{align}
  We just estimate some terms on the right-hand side of \eqref{16} in $K$ as follows. 
  It follows from Young's inequality that
  \begin{align*}
  &C\Big|\int_{0}^{\tau}\int_{\bbt_\varepsilon}\int_{\bbr} u_i\Psi_{y_i}\cdot\triangle\Psi + \frac{p'(\rho)}{\rho} \nabla\phi\cdot\triangle\Psi dy_1dy_2d\tau\Big| \\
  &\leq \frac{1}{160} \int_{0}^{\tau} \|\triangle\Psi\|^2 d\tau + C \int_{0}^{\tau} (\|\nabla\phi\|^2 + \|\nabla\Psi\|^2) d\tau.
  \end{align*}
  By Young's inequality, Lemma \ref{lemma2.2} and Lemma \ref{lemma2.3}, one has
  \begin{align*}
  &C\Big|\int_{0}^{\tau}\int_{\bbt_\varepsilon}\int_{\bbr} \tilde{u}_{1y_1}\psi_1\triangle\psi_1 dy_1dy_2d\tau\Big| 
  \leq \frac{1}{160} \int_{0}^{\tau} \|\triangle\psi_1\|^2 d\tau + C \int_{0}^{\tau}\int_{\bbt_\varepsilon}\int_{\bbr} |\tilde{u}_{1y_1}\psi_1|^2 dy_1dy_2d\tau \\
  &\leq \frac{1}{160} \int_{0}^{\tau} \|\triangle\psi_1\|^2 d\tau + C_T (\frac{\varepsilon}{\delta^2} + \frac{\varepsilon^3}{\delta^5}) \sup_{0 \leq \tau \leq \tau_1(\varepsilon)} \|\psi_1\|^2.
  \end{align*}
  By H\"{o}lder's inequality, Sobolev's inequality and Young's inequality, it holds that
  \begin{align*}
  &C\Big|\int_{0}^{\tau}\int_{\bbt_\varepsilon}\int_{\bbr} \frac{\mu + \lam}{\rho^2}div\Psi\nabla\phi\cdot\triangle\Psi dy_1dy_2d\tau\Big|
  \leq C \int_{0}^{\tau} \|\triangle\Psi\|\|\nabla\phi\|_{L^4}\|\nabla\Psi\|_{L^4} d\tau \\
  &\leq C \int_{0}^{\tau} \|\triangle\Psi\|\|\nabla\phi\|_1\|\nabla\Psi\|_1 d\tau
  \leq \frac{1}{160} \int_{0}^{\tau} \|\triangle\Psi\|^2 d\tau + CE^2 \int_{0}^{\tau} \|\nabla\Psi\|_1^2 d\tau.
  \end{align*}
  It follows from Young's inequality, Lemma \ref{lemma2.2} and Lemma \ref{lemma2.3} that
  \begin{align*}
  &C\Big|\int_{0}^{\tau}\int_{\bbt_\varepsilon}\int_{\bbr} \frac{\mu + \lam}{\rho^2}\tilde{\rho}_{y_1} div\Psi\triangle\psi_1 dy_1dy_2d\tau\Big|
  \leq \frac{1}{160} \int_{0}^{\tau} \|\triangle\psi_1\|^2 d\tau + C \int_{0}^{\tau} \|\tilde{\rho}_{y_1}div\Psi\|^2 d\tau \\
  &\leq \frac{1}{160} \int_{0}^{\tau} \|\triangle\psi_1\|^2 d\tau + C_T (\frac{\varepsilon^2}{\delta^2} + \frac{\varepsilon^4}{\delta^5}) \int_{0}^{\tau} \|\nabla\Psi\|^2 d\tau.
  \end{align*}
  All the other terms in $K(\tau, y_1, y_2)$ can be analyzed similarly. Then substituting the resulting estimates into \eqref{16} and the elliptic estimate $\|\triangle\Psi\| \sim \|\nabla^2\Psi\|$ give
  \begin{align}
    \begin{aligned} \label{17}
      &\|\nabla\Psi(\tau)\|^2 + \int_{0}^{\tau} \|\nabla^2\Psi\|^2 d\tau \\
      &\leq C_T \frac{\varepsilon^3}{\delta^7} + C \|\nabla\Psi_0\|^2 + C_T (\frac{\varepsilon}{\delta^2} + \frac{\varepsilon^3}{\delta^5}) \sup_{0 \leq \tau \leq \tau_1(\varepsilon)} \|(\phi, \psi_1)\|^2 \\
      &\quad + C_T (\frac{\varepsilon^2}{\delta^2} + \frac{\varepsilon^4}{\delta^5}) \int_{0}^{\tau} \|\nabla\Psi\|^2 d\tau + C E^2 \int_{0}^{\tau} \|\nabla\Psi\|_1^2 d\tau + C \int_{0}^{\tau} \|(\nabla\phi, \nabla\Psi)\|^2 d\tau.
    \end{aligned}
  \end{align}
  Combining \eqref{LEM4.2} and \eqref{17} and taking $\frac{\varepsilon}{\delta^4}, \varepsilon$ and $E$ suitably small, we complete the proof of Lemma \ref{lemma4.3}.
\end{proof}

\begin{lemma} \label{lemma4.4}
	There exists a positive constant $C_T$ such that for $0 \leq \tau \leq \tau_1(\varepsilon)$,
	\begin{align}
	\begin{aligned} \label{LEM4.4}
	&\sup_{0 \leq \tau \leq \tau_1(\varepsilon)} (\| (\phi, \Psi)(\tau) \|_1^2 + \|\nabla^2\phi(\tau)\|^2) + \int_{0}^{\tau_1(\varepsilon)}\big[ \|\bar{u}_{1y_1}^{1/2} (\phi,  \psi_1) \|^2 + \|(\nabla\phi, \nabla\Psi)\|_1^2 \big] d\tau \\
	&\leq C_T \frac{\varepsilon}{\delta^4} + C (\| (\phi_0, \Psi_0) \|_1^2 + \|\nabla^2\phi_0\|^2) + C E^2 \int_{0}^{\tau_1(\varepsilon)} \|\nabla^3\Psi\|^2 d\tau.
	\end{aligned}
	\end{align}
\end{lemma}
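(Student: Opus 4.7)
The plan is to adapt the argument of Lemma \ref{lemma4.2} one derivative order higher in order to control $\|\nabla^2\phi\|^2$ in $L^\infty_\tau L^2$ and in $L^2_\tau L^2$. First I would apply two spatial derivatives $\partial_{y_j}\partial_{y_k}$ ($j,k=1,2$) to the continuity equation in \eqref{REF}, multiply the resulting relation by $\phi_{y_j y_k}/\rho^2$, and sum over $j,k$. After integration by parts, this produces an identity of the schematic form
\begin{equation*}
\Big(\frac{|\nabla^2\phi|^2}{2\rho^2}\Big)_\tau + \di\div\Big(\frac{\textbf{u}|\nabla^2\phi|^2}{2\rho^2}\Big) + \sum_{j,k}\frac{\phi_{y_j y_k}\,\partial_{y_j}\partial_{y_k}\div\Psi}{\rho} = \widetilde{G}(\tau, y_1, y_2),
\end{equation*}
where $\widetilde{G}$ collects commutators, lower-order products and wave-profile forcings. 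Second, I would differentiate the momentum equation of \eqref{REF} by $\partial_{y_j}$, multiply by $\partial_{y_j}\nabla\phi/\rho$, and sum over $j$. As in the derivation of \eqref{12}, the pressure term produces the coercive quantity $\tfrac{p'(\rho)}{\rho}|\nabla^2\phi|^2$, while the Lam\'e viscous term $\mu\triangle\Psi+(\mu+\lam)\nabla\div\Psi$ yields, after one integration by parts, precisely $-\sum_{j,k}\tfrac{2\mu+\lam}{\rho}\phi_{y_j y_k}\,\partial_{y_j}\partial_{y_k}\div\Psi$, up to lower-order pieces.

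Multiplying the first identity by $(2\mu+\lam)$ and adding it to the second induces exactly the same structural cancellation of the highest-order coupling term as in Lemma \ref{lemma4.2}. Integrating over $[0,\tau]\times\bbr\times\bbt_\varepsilon$ and handling the time-derivative cross term $\partial_\tau(\partial_{y_j}\Psi\cdot\partial_{y_j}\nabla\phi)$ by $\|\nabla\Psi\|_1^2$ (already controlled by Lemma \ref{lemma4.3}) yields the base inequality
\begin{equation*}
\|\nabla^2\phi(\tau)\|^2 + \int_0^\tau \|\nabla^2\phi\|^2\, d\tau \leq C\bigl(\|\nabla^2\phi_0\|^2+\|\nabla\Psi_0\|^2\bigr) + \mathcal{R}(\tau),
\end{equation*}
where $\mathcal{R}(\tau)$ collects the remainder integrals coming from $\widetilde{G}$ and from the momentum-side identity.

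Bounding $\mathcal{R}(\tau)$ is the technical core and proceeds along the same lines as Lemmas \ref{lemma4.2}--\ref{lemma4.3}. Linear contributions arising from the approximate rarefaction wave and the hyperbolic wave are controlled via Lemmas \ref{lemma2.2}--\ref{lemma2.3} and produce a term of size $C_T\varepsilon/\delta^4$. Quadratic perturbation terms with at most two derivatives are absorbed using the a priori assumption \eqref{PA}, combined with H\"older's inequality and the $\varepsilon$-uniform Sobolev embeddings $H^1(\bbr\times\bbt_\varepsilon)\hookrightarrow L^4$ and $H^2(\bbr\times\bbt_\varepsilon)\hookrightarrow L^\infty$.

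The principal obstacle is the appearance of trilinear terms forcing a third derivative of $\Psi$; these arise most notably when a derivative $\partial_{y_j}$ falls on the coefficient $1/\rho$ in the nonlinear viscous contribution, producing schematic integrals of the form $\int \nabla\phi\,\nabla^2\phi\,\nabla^3\Psi/\rho^2\, dy\, d\tau$. By H\"older and Sobolev these are bounded by $CE\|\nabla^2\phi\|\,\|\nabla^3\Psi\|$, so Young's inequality contributes $\tfrac{1}{16}\int\|\nabla^2\phi\|^2\, d\tau + CE^2\int \|\nabla^3\Psi\|^2\, d\tau$; the first summand is absorbed into the left-hand side while the second is retained on the right, to be closed in the subsequent estimate controlling $\int \|\nabla^3\Psi\|^2\, d\tau$. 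Combining the resulting inequality with \eqref{LEM4.3} of Lemma \ref{lemma4.3} and choosing $\varepsilon/\delta^4$, $\varepsilon$ and $E$ suitably small yields \eqref{LEM4.4}.
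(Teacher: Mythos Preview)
Your proposal is correct and follows essentially the same approach as the paper: applying $\nabla^2$ to the continuity equation and multiplying by $\nabla^2\phi/\rho^2$, applying $\nabla$ to the momentum equation (after dividing by $\rho$) and multiplying by $\nabla^2\phi$, then exploiting the $(2\mu+\lam)$ cancellation exactly as in Lemma \ref{lemma4.2}, combining with Lemma \ref{lemma4.3}, and isolating the unavoidable $CE^2\int\|\nabla^3\Psi\|^2\,d\tau$ term from the trilinear viscous commutators. The only cosmetic difference is that the paper divides the momentum equation by $\rho$ before differentiating rather than after, but this merely redistributes lower-order commutator terms and does not affect the structural cancellation or the estimates.
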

\begin{proof}
  Applying  the operator $\nabla^2$  on the first equation of \eqref{REF} and then multiplying the resulted equation by $\nabla^2\phi/\rho^2$, we have
  \begin{align}
    \begin{aligned} \label{21}
	  &(\frac{|\nabla^2\phi|^2}{2\rho^2})_\tau + div(\frac{\textbf{u} |\nabla^2\phi|^2}{2\rho^2}) + \frac{\nabla^2\phi\cdot\nabla^2 div\Psi}{\rho} \\
	  &= \frac{div\Psi|\nabla^2\phi|^2}{2\rho^2} -\frac{\tilde{\rho}_{y_1y_1}div\Psi\phi_{y_1y_1}}{\rho^2} - \frac{\phi_{y_i} \nabla\phi_{y_i}\cdot\nabla div\Psi}{\rho^2} - \frac{2\tilde{\rho}_{y_1}\nabla\phi_{y_1}\cdot\nabla div\Psi}{\rho^2} \\
	  &\quad - \frac{\nabla\phi\cdot \nabla\phi_{y_i}div\Psi_{y_i}}{\rho^2} - \frac{\nabla\psi_j\cdot\nabla\phi_{y_i}\phi_{y_iy_j}}{\rho^2} - \frac{\psi_{jy_i}\nabla\phi_{y_j}\cdot\nabla\phi_{y_i}}{\rho^2} - \frac{\phi_{y_i}\nabla^2\phi\cdot\nabla^2\psi_i}{\rho^2} \\
	  &\quad - \frac{\tilde{u}_{1y_1y_1}\phi_{y_1}\phi_{y_1y_1}}{\rho^2} - \frac{2\tilde{u}_{1y_1}|\nabla\phi_{y_1}|^2}{\rho^2} - \frac{\tilde{\rho}_{y_1y_1y_1}\psi_1\phi_{y_1y_1}}{\rho^2} - \frac{2\tilde{\rho}_{y_1y_1} \nabla\psi_1\cdot\nabla\phi_{y_1}}{\rho^2} \\
	  &\quad - \frac{\tilde{\rho}_{y_1}\nabla^2\psi_1\cdot\nabla^2\phi}{\rho^2} - \frac{\tilde{u}_{1y_1y_1y_1}\phi\phi_{y_1y_1}}{\rho^2} - \frac{2\tilde{u}_{1y_1y_1} \nabla\phi\cdot\nabla\phi_{y_1}}{\rho^2} + \frac{\tilde{u}_{1y_1} |\nabla^2\phi|^2}{2\rho^2} \\
	  &:= L(\tau, y_1, y_2).
	\end{aligned}
  \end{align}
Then dividing the second equation of \eqref{REF} by $\rho$, applying the operator $\nabla$ on the resulting equation and then multiplying the final equation by $\nabla^2\phi$, we have
  \begin{align}
    \begin{aligned} \label{22}
     &(\nabla\Psi \cdot \nabla^2\phi)_\tau - div(\phi_{y_i\tau}\nabla\psi_i - u_i\phi_{y_j}\nabla\psi_{jy_i} + \textbf{u}\nabla\phi\cdot\triangle\Psi + \frac{\mu}{\rho}\phi_{y_iy_j}\nabla\psi_{iy_j}) \\
     &\quad + (\frac{\mu}{\rho}\nabla\phi_{y_j}\cdot\nabla\psi_{iy_j})_{y_i} +  \frac{p'(\rho)}{\rho}|\nabla^2\phi|^2 - \frac{2\mu + \lam}{\rho}\nabla^2\phi\cdot\nabla^2 div\Psi \\
     &= \rho\nabla div\Psi\cdot\triangle\Psi + \tilde{\rho}_{y_1} div\Psi\triangle\psi_1 + \phi_{y_j}\nabla\psi_i\cdot\nabla\psi_{jy_i} + \tilde{u}_{1y_1}\nabla\phi\cdot\Psi_{y_1y_1} \\
     &\quad + \phi_{y_i}\nabla\psi_i\cdot\triangle\Psi + \tilde{u}_{1y_1}\phi_{y_1}\triangle\psi_1 + \tilde{\rho}_{y_1y_1}\psi_1\triangle\psi_1 + \tilde{\rho}_{y_1}\nabla\psi_1\cdot\triangle\Psi \\
     &\quad  + \tilde{u}_{1y_1y_1}\phi\triangle\psi_1 - \psi_{jy_i}\nabla\psi_i\cdot\nabla\phi_{y_j}- \tilde{u}_{1y_1}\psi_{jy_1}\phi_{y_1y_j} - \tilde{u}_{1y_1y_1}\psi_1\phi_{y_1y_1}  \\
     &\quad- \tilde{u}_{1y_1}\nabla\psi_1\cdot\nabla\phi_{y_1} - (\frac{p'(\rho)}{\rho})'\phi_{y_i}\nabla\phi\cdot\nabla\phi_{y_i} - 2(\frac{p'(\rho)}{\rho})'\tilde{\rho}_{y_1}\nabla\phi\cdot\nabla\phi_{y_1} \\
     &\quad - \big[(\frac{p'(\rho)}{\rho})' - (\frac{p'(\tilde{\rho})}{\tilde{\rho}})'\big]\tilde{\rho}_{y_1}^2\phi_{y_1y_1} - (\frac{p'(\rho)}{\rho}  - \frac{p'(\tilde{\rho})}{\tilde{\rho}}) \tilde{\rho}_{y_1y_1}\phi_{y_1y_1} \\
     &\quad + \frac{\mu}{\rho^2}\nabla\phi\cdot\nabla\psi_{iy_j}\phi_{y_iy_j}- \frac{\mu}{\rho^2}\phi_{y_i}\nabla\phi_{y_j}\cdot\nabla\psi_{iy_j} + \frac{\mu}{\rho^2}\tilde{\rho}_{y_1}\psi_{iy_1y_j}\phi_{y_iy_j} \\
     &\quad  - \frac{\mu}{\rho^2}\tilde{\rho}_{y_1}\nabla\phi_{y_j}\cdot\nabla\psi_{1y_j} - \frac{\mu}{\rho^2}\nabla\phi\cdot\nabla\phi_{y_i}\triangle\psi_i   - \frac{\mu}{\rho^2}\tilde{\rho}_{y_1}\phi_{y_1y_i}\triangle\psi_i\\
     &\quad - \frac{\mu + \lam}{\rho^2}\nabla\phi\cdot\nabla\phi_{y_i}div\Psi_{y_i} - \frac{\mu + \lam}{\rho^2}\tilde{\rho}_{y_1}\phi_{y_1y_i}div\Psi_{y_i}\\
     &\quad+ \frac{2\mu + \lam}{\rho}(\frac{-d_1\bar{u}_1 + d_2}{\tilde{\rho}})_{y_1y_1y_1}\phi_{y_1y_1} - \frac{2\mu + \lam}{\rho^2}(\frac{-d_1\bar{u}_1 + d_2}{\tilde{\rho}})_{y_1y_1}\nabla\phi\cdot\nabla\phi_{y_1} \\
     &\quad - \frac{2\mu + \lam}{\rho^2}(\frac{-d_1\bar{u}_1 + d_2}{\tilde{\rho}})_{y_1y_1}\tilde{\rho}_{y_1}\phi_{y_1y_1} - \frac{2\mu + \lam}{\tilde{\rho}\rho}\bar{u}_{1y_1y_1y_1}\phi\phi_{y_1y_1} \\
     &\quad - \frac{2\mu + \lam}{\rho^2}\bar{u}_{1y_1y_1}\nabla\phi\cdot\nabla\phi_{y_1} + \frac{(2\mu + \lam)(\tilde{\rho} + \rho)}{\tilde{\rho}^2\rho^2}\tilde{\rho}_{y_1}\bar{u}_{1y_1y_1}\phi\phi_{y_1y_1} \\
     &\quad - \big[\frac{(\tilde{\rho} \tilde{u}_1^2 - \bar{\rho} \bar{u}_1^2 + \bar{u}_1^2d_1 - 2\bar{u}_1d_2)_{y_1}}{\tilde{\rho}}\big]_{y_1}\phi_{y_1y_1} - \big[\frac{(p(\tilde{\rho}) - p(\bar{\rho}) - p'(\bar{\rho})d_1)_{y_1}}{\tilde{\rho}}\big]_{y_1}\phi_{y_1y_1} \\
     &:= \rho\nabla div\Psi\cdot\triangle\Psi + M(\tau, y_1, y_2).
    \end{aligned}
  \end{align}
We multiply  \eqref{21} by $2\mu + \lam$, add  the resulting equation and \eqref{22} together, use the same cancellations as in Lemma \ref{lemma4.2},  
and   integrate the final equation over $[0, ~\tau]\times\bbr\times\bbt_\varepsilon$ to get
   \begin{align}
   \begin{aligned}\label{23}
   \int_{\bbt_\varepsilon}\int_{\bbr} \Big(\frac{2\mu + \lam}{2\rho^2}|\nabla^2\phi|^2 + \nabla\Psi\cdot\nabla^2\phi \Big)dy_1dy_2\big|_0^\tau + \int_{0}^{\tau}\int_{\bbt_\varepsilon}\int_{\bbr} \frac{p'(\rho)}{\rho}|\nabla^2\phi|^2 dy_1dy_2d\tau \\
   \di = \int_{0}^{\tau}\int_{\bbt_\varepsilon}\int_{\bbr} \Big[\rho\nabla div\Psi\cdot\triangle\Psi + (2\mu + \lam)L(\tau, y_1, y_2) + M(\tau, y_1, y_2)\Big] dy_1dy_2d\tau.
   \end{aligned}
   \end{align}      
  The combination of \eqref{LEM4.3} and \eqref{23} leads to
    \begin{equation} \label{24}
      \begin{array}{l}
     \di \| (\phi, \Psi)(\tau) \|_1^2 + \|\nabla^2\phi(\tau)\|^2 + \int_{0}^{\tau}\big[ \|\bar{u}_{1y_1}^{1/2} (\phi,  \psi_1) \|^2 + \|(\nabla\phi, \nabla\Psi)\|_1^2 \big] d\tau \\
     \di \leq C_T \frac{\varepsilon}{\delta^4} + C (\| (\phi_0, \Psi_0) \|_1^2 + \|\nabla^2\phi_0\|^2) \\
     \di \quad + C|\int_{0}^{\tau}\int_{\bbt_\varepsilon}\int_{\bbr} \Big[(2\mu + \lam)L(\tau, y_1, y_2) + M(\tau, y_1, y_2)\Big] dy_1dy_2d\tau|.
      \end{array}
    \end{equation}
  Now we estimate some terms on the right-hand side of \eqref{24} selectively. By H\"{o}lder's inequality, Sobolev's inequality and Young's inequality, it holds that
  \begin{align*}
  &C|\int_{0}^{\tau}\int_{\bbt_\varepsilon}\int_{\bbr} \frac{2\mu + \lam}{2\rho^2}div\Psi|\nabla^2\phi|^2 dy_1dy_2d\tau|
  \leq C \int_{0}^{\tau} \|\nabla\Psi\|_{L^\infty} \|\nabla^2\phi\|^2 d\tau \\
  &\leq CE \int_{0}^{\tau} \|\nabla\Psi\|_2 \|\nabla^2\phi\| d\tau
  \leq \frac{1}{160} \int_{0}^{\tau} \|\nabla^2\phi\| d\tau + CE^2 \int_{0}^{\tau} \|\nabla\Psi\|_2^2 d\tau.
  \end{align*}
  By H\"{o}lder's inequality, Sobolev's inequality and Young's inequality, one has
  \begin{align*}
  &C|\int_{0}^{\tau}\int_{\bbt_\varepsilon}\int_{\bbr} \frac{2\mu + \lam}{\rho^2}\phi_{y_i} \nabla\phi_{y_i} \cdot \nabla div\Psi dy_1dy_2d\tau|
  \leq C \int_{0}^{\tau} \|\nabla^2\phi\|\|\nabla\phi\|_{L^4}\|\nabla div\Psi\|_{L^4} d\tau \\
  &\leq C \int_{0}^{\tau} \|\nabla^2\phi\|\|\nabla\phi\|_1\|\nabla^2\Psi\|_1 d\tau
  \leq \frac{1}{160} \int_{0}^{\tau} \|\nabla^2\phi\|^2 d\tau + CE^2 \int_{0}^{\tau} \|\nabla^2\Psi\|_1^2 d\tau.
  \end{align*}
  Similarly, one has
  \begin{align*}
  &C|\int_{0}^{\tau}\int_{\bbt_\varepsilon}\int_{\bbr} (\frac{p'(\rho)}{\rho})'\phi_{y_i}\nabla\phi\cdot\nabla\phi_{y_i} dy_1dy_2d\tau|
  \leq C \int_{0}^{\tau} \|\nabla^2\phi\|\|\nabla\phi\|_{L^4}^2 d\tau \\
  &\leq C \int_{0}^{\tau} \|\nabla^2\phi\|\|\nabla\phi\|_1^2 d\tau
  \leq \frac{1}{160} \int_{0}^{\tau} \|\nabla^2\phi\|^2 d\tau + CE^2 \int_{0}^{\tau} \|\nabla\phi\|_1^2 d\tau.
  \end{align*}
  By Young's inequality, Lemma \ref{lemma2.2} and Lemma \ref{lemma2.3}, it holds that
  \begin{align*}
  &C|\int_{0}^{\tau}\int_{\bbt_\varepsilon}\int_{\bbr} ((\frac{p'(\rho)}{\rho})' - (\frac{p'(\tilde{\rho})}{\tilde{\rho}})')\tilde{\rho}_{y_1}^2\phi_{y_1y_1} dy_1dy_2d\tau| \\
  &\leq \frac{1}{160} \int_{0}^{\tau} \|\phi_{y_1y_1}\|^2 d\tau + C \int_{0}^{\tau}\int_{\bbt_\varepsilon}\int_{\bbr} |\tilde{\rho}_{y_1}^2\phi|^2 dy_1dy_2d\tau \\
  &\leq \frac{1}{160} \int_{0}^{\tau} \|\phi_{y_1y_1}\|^2 d\tau + C \int_{0}^{\tau} \|\tilde{\rho}_{y_1}\|_{L_{y_1}^\infty}^4\|\phi\|^2 d\tau \\
  &\leq \frac{1}{160} \int_{0}^{\tau} \|\phi_{y_1y_1}\|^2 d\tau + C_T \varepsilon^3 \sup_{0 \leq t \leq T} \|\tilde{\rho}_{x_1}\|_{L_{x_1}^\infty}^4  \sup_{0 \leq \tau \leq \tau_1(\varepsilon)} \|\phi\|^2 \\
  &\leq \frac{1}{160} \int_{0}^{\tau} \|\phi_{y_1y_1}\|^2 d\tau + C_T (\frac{\varepsilon^3}{\delta^4} + \frac{\varepsilon^7}{\delta^{10}})  \sup_{0 \leq \tau \leq \tau_1(\varepsilon)} \|\phi\|^2.  
  \end{align*}
  It follows from Young's inequality, Lemma \ref{lemma2.2} and Lemma \ref{lemma2.3} that
  \begin{align*}
  &C|\int_{0}^{\tau}\int_{\bbt_\varepsilon}\int_{\bbr} \frac{2\mu + \lam}{\rho}(\frac{-d_1\bar{u}_1 + d_2}{\tilde{\rho}})_{y_1y_1y_1}\phi_{y_1y_1} dy_1dy_2d\tau| \\
  &\leq \frac{1}{160} \int_{0}^{\tau} \|\phi_{y_1y_1}\|^2 d\tau + C \int_{0}^{\tau}\int_{\bbt_\varepsilon}\int_{\bbr} |(\frac{-d_1\bar{u}_1 + d_2}{\tilde{\rho}})_{y_1y_1y_1}|^2 dy_1dy_2d\tau \\
  &\leq \frac{1}{160} \int_{0}^{\tau} \|\phi_{y_1y_1}\|^2 d\tau + C\varepsilon^3 \int_{0}^{t}\int_{\bbr} |(\frac{-d_1\bar{u}_1 + d_2}{\tilde{\rho}})_{x_1x_1x_1}|^2 dx_1dt \\
  &\leq \frac{1}{160} \int_{0}^{\tau} \|\phi_{y_1y_1}\|^2 d\tau + C_T \frac{\varepsilon^5}{\delta^8},
  \end{align*}
  where we have used the following facts
  \[
  \int_{0}^{t}\int_{\bbr} |(\frac{\bar{u}_1}{\tilde{\rho}}d_{1x_1x_1x_1})|^2 dx_1dt \leq C \int_{0}^{t}\int_{\bbr} |d_{1x_1x_1x_1}|^2 dx_1dt \leq C_T \frac{\varepsilon^2}{\delta^8},
  \]
  \[
  \int_{0}^{t}\int_{\bbr} |(\frac{3d_{1x_1x_1}\bar{u}_{1x_1}}{\tilde{\rho}})|^2 dx_1dt \leq C \int_{0}^{t} \|d_{1x_1x_1}\|_{L_{x_1}^2}^2 \|\bar{u}_{1x_1}\|_{L_{x_1}^{\infty}}^2 dt \leq C_T \frac{\varepsilon^2}{\delta^7},
  \]
  \[
  \int_{0}^{t}\int_{\bbr} |(\frac{3d_{1x_1}\bar{u}_{1x_1x_1}}{\tilde{\rho}})|^2 dx_1dt \leq C \int_{0}^{t} \|d_{1x_1}\|_{L_{x_1}^2}^2 \|\bar{u}_{1x_1x_1}\|_{L_{x_1}^{\infty}}^2 dt \leq C_T \frac{\varepsilon^2}{\delta^7},
  \]
  \[
  \int_{0}^{t}\int_{\bbr} |(\frac{d_1\bar{u}_{1x_1x_1x_1}}{\tilde{\rho}})|^2 dx_1dt \leq C \int_{0}^{t} \|d_1\|_{L_{x_1}^2}^2 \|\bar{u}_{1x_1x_1x_1}\|_{L_{x_1}^{\infty}}^2 dt \leq C_T \frac{\varepsilon^2}{\delta^7},
  \]
  \[
  \int_{0}^{t}\int_{\bbr} |(\frac{3\bar{u}_1d_{1x_1x_1}d_{1x_1}}{\tilde{\rho}^2})|^2 dx_1dt \leq C \int_{0}^{t} \|d_{1x_1}\|_{L_{x_1}^{\infty}}^2 \|d_{1x_1x_1}\|_{L_{x_1}^2}^2 dt \leq C_T \frac{\varepsilon^4}{\delta^{11}},
  \]
  \[
  \int_{0}^{t}\int_{\bbr} |(\frac{6d_{1x_1}\bar{u}_{1x_1}\bar{\rho}_{x_1}}{\tilde{\rho}^2})|^2 dx_1dt \leq C \int_{0}^{t} \|d_{1x_1}\|_{L_{x_1}^2}^2 \|\bar{u}_{1x_1}\|_{L_{x_1}^{\infty}}^4 dt \leq C_T \frac{\varepsilon^2}{\delta^7},
  \]
  \[
  \int_{0}^{t}\int_{\bbr} |(\frac{6d_{1x_1}^2\bar{u}_{1x_1}}{\tilde{\rho}^2})|^2 dx_1dt \leq C \int_{0}^{t} \|\bar{u}_{1x_1}\|_{L_{x_1}^{\infty}}^2 \|d_{1x_1}\|_{L_{x_1}^{\infty}}^2 \|d_{1x_1}\|_{L_{x_1}^2}^2  dt \leq C_T \frac{\varepsilon^4}{\delta^{10}},
  \]
  \[
  \int_{0}^{t}\int_{\bbr} |(\frac{3d_1\bar{u}_{1x_1x_1}\bar{\rho}_{x_1}}{\tilde{\rho}^2})|^2 dx_1dt \leq C \int_{0}^{t} \|d_1\|_{L_{x_1}^2}^2 \|\bar{u}_{1x_1x_1}\|_{L_{x_1}^{\infty}}^2 \|\bar{\rho}_{x_1}\|_{L_{x_1}^{\infty}}^2 dt \leq C_T \frac{\varepsilon^2}{\delta^7},
  \]
  \[
  \int_{0}^{t}\int_{\bbr} |(\frac{6\bar{u}_1d_{1x_1}^3}{\tilde{\rho}^3})|^2 dx_1dt \leq C \int_{0}^{t} \|d_{1x_1}\|_{L_{x_1}^{\infty}}^4 \|d_{1x_1}\|_{L_{x_1}^2}^2 dt \leq C_T \frac{\varepsilon^6}{\delta^{14}},
  \]
  and
  \[
  \int_{0}^{t}\int_{\bbr} |(\frac{6d_1\bar{u}_{1x_1}\bar{\rho}_{x_1}^2}{\tilde{\rho}^3})|^2 dx_1dt \leq C \int_{0}^{t} \|d_1\|_{L_{x_1}^2}^2 \|\bar{u}_{1x_1}\|_{L_{x_1}^{\infty}}^6 dt \leq C_T \frac{\varepsilon^2}{\delta^7}.
  \]
  By Young's inequality, Lemma \ref{lemma2.2} and Lemma \ref{lemma2.3}, one has
  \begin{align*}
  &C |\int_{0}^{\tau}\int_{\bbt_\varepsilon}\int_{\bbr} \frac{2\mu + \lam}{\rho^2}(\frac{-d_1\bar{u}_1 + d_2}{\tilde{\rho}})_{y_1y_1}\nabla\phi\cdot\nabla\phi_{y_1} dy_1dy_2d\tau| \\
  &\leq C \sup_{0 \leq \tau \leq \tau_1(\varepsilon)}\|(\frac{-d_1\bar{u}_1 + d_2}{\tilde{\rho}})_{y_1y_1}\|_{L_{y_1}^\infty} \int_{0}^{\tau} \|\nabla\phi\|\|\nabla\phi_{y_1}\| d\tau \\
  &\leq \frac{1}{160} \int_{0}^{\tau} \|\nabla\phi_{y_1}\|^2 d\tau + C_T (\frac{\varepsilon^6}{\delta^7} + \frac{\varepsilon^8}{\delta^8}) \int_{0}^{\tau} \|\nabla\phi\|^2 d\tau.
  \end{align*}

  It follows from Young's inequality, Lemma \ref{lemma2.2} and Lemma \ref{lemma2.3} that
  \begin{align*}
  &C|\int_{0}^{\tau}\int_{\bbt_\varepsilon}\int_{\bbr} \frac{2\mu + \lam}{\rho^2}(\frac{-d_1\bar{u}_1 + d_2}{\tilde{\rho}})_{y_1y_1}\tilde{\rho}_{y_1}\phi_{y_1y_1} dy_1dy_2d\tau| \\
  &\leq \frac{1}{160} \int_{0}^{\tau} \|\phi_{y_1y_1}\|^2 d\tau + C \int_{0}^{\tau}\int_{\bbt_\varepsilon}\int_{\bbr} |(\frac{-d_1\bar{u}_1 + d_2}{\tilde{\rho}})_{y_1y_1}\tilde{\rho}_{y_1}|^2 dy_1dy_2d\tau \\
  &\leq \frac{1}{160} \int_{0}^{\tau} \|\phi_{y_1y_1}\|^2 d\tau + C (\frac{\varepsilon^2}{\delta^2} + \frac{\varepsilon^4}{\delta^5})\int_{0}^{\tau}\int_{\bbt_\varepsilon}\int_{\bbr} |(\frac{-d_1\bar{u}_1 + d_2}{\tilde{\rho}})_{y_1y_1}|^2 dy_1dy_2d\tau \\
  &\leq \frac{1}{160} \int_{0}^{\tau} \|\phi_{y_1y_1}\|^2 d\tau + C_T \frac{\varepsilon^5}{\delta^8}.
  \end{align*}
  By Young's inequality, Lemma \ref{lemma2.2} and Lemma \ref{lemma2.3}, one has
  \begin{align*}
  &C|\int_{0}^{\tau}\int_{\bbt_\varepsilon}\int_{\bbr} \big[\frac{(\tilde{\rho} \tilde{u}_1^2 - \bar{\rho} \bar{u}_1^2 + \bar{u}_1^2d_1 - 2\bar{u}_1d_2)_{y_1}}{\tilde{\rho}}\big]_{y_1}\phi_{y_1y_1} dy_1dy_2d\tau| \\
  &\leq \frac{1}{160} \int_{0}^{\tau} \|\phi_{y_1y_1}\|^2 d\tau + C \int_{0}^{\tau}\int_{\bbt_\varepsilon}\int_{\bbr} |\big[\frac{(\frac{1}{\tilde{\rho}}(\bar{u}_1d_1 - d_2)^2)_{y_1}}{\tilde{\rho}}\big]_{y_1}|^2 dy_1dy_2d\tau \\
  &\leq \frac{1}{160} \int_{0}^{\tau} \|\phi_{y_1y_1}\|^2 d\tau + C\varepsilon \int_{0}^{t}\int_{\bbr} |\big[\frac{(\frac{1}{\tilde{\rho}}(\bar{u}_1d_1 - d_2)^2)_{x_1}}{\tilde{\rho}}\big]_{x_1}|^2 dx_1dt \\
  &\leq \frac{1}{160} \int_{0}^{\tau} \|\phi_{y_1y_1}\|^2 d\tau + C_T \frac{\varepsilon^5}{\delta^9},
  \end{align*}
   where we have used the facts
  \[
  \int_{0}^{t}\int_{\bbr} |(\frac{3\bar{u}_1^2}{\tilde{\rho}^4}d_1^2\bar{\rho}_{x_1}^2)|^2 dx_1dt \leq C \int_{0}^{t} \|d_1\|_{L_{x_1}^2}^2 \|d_1\|_{L_{x_1}^{\infty}}^2 \|\bar{\rho}_{x_1}\|_{L_{x_1}^{\infty}}^4 dt \leq C_T \frac{\varepsilon^4}{\delta^8},
  \]
  \[
  \int_{0}^{t}\int_{\bbr} |(\frac{3\bar{u}_1^2}{\tilde{\rho}^4}d_1^2d_{1x_1}^2)|^2 dx_1dt \leq C \int_{0}^{t} \|d_1\|_{L_{x_1}^{\infty}}^4 \|d_{1x_1}\|_{L_{x_1}^{\infty}}^2 \|d_{1x_1}\|_{L_{x_1}^2}^2 dt \leq C_T \frac{\varepsilon^8}{\delta^{15}},
  \]
  \[
  \int_{0}^{t}\int_{\bbr} |(\frac{6\bar{u}_1^2}{\tilde{\rho}^3}d_1d_{1x_1}\bar{\rho}_{x_1})|^2 dx_1dt \leq C \int_{0}^{t} \|d_{1x_1}\|_{L_{x_1}^2}^2 \|d_1\|_{L_{x_1}^{\infty}}^2 \|\bar{\rho}_{x_1}\|_{L_{x_1}^{\infty}}^2 dt \leq C_T \frac{\varepsilon^4}{\delta^8},
  \]
  \[
  \int_{0}^{t}\int_{\bbr} |(\frac{6\bar{u}_1^2}{\tilde{\rho}^3}d_1d_{1x_1}^2)|^2 dx_1dt \leq C \int_{0}^{t} \|d_{1x_1}\|_{L_{x_1}^2}^2 \|d_1\|_{L_{x_1}^{\infty}}^2 \|d_{1x_1}\|_{L_{x_1}^{\infty}}^2 dt \leq C_T \frac{\varepsilon^6}{\delta^{12}},
  \]
  \[
  \int_{0}^{t}\int_{\bbr} |(\frac{\bar{u}_1^2}{\tilde{\rho}^3}d_1^2\bar{\rho}_{x_1x_1})|^2 dx_1dt \leq C \int_{0}^{t} \|d_1\|_{L_{x_1}^2}^2 \|d_1\|_{L_{x_1}^{\infty}}^2 \|\bar{\rho}_{x_1x_1}\|_{L_{x_1}^{\infty}}^2 dt \leq C_T \frac{\varepsilon^4}{\delta^8},
  \]
  \[
  \int_{0}^{t}\int_{\bbr} |(\frac{\bar{u}_1^2}{\tilde{\rho}^3}d_1^2d_{1x_1x_1})|^2 dx_1dt \leq C \int_{0}^{t} \|d_1\|_{L_{x_1}^{\infty}}^4 \|d_{1x_1x_1}\|_{L_{x_1}^2}^2 dt \leq C_T \frac{\varepsilon^6}{\delta^{12}},
  \]
  \[
  \int_{0}^{t}\int_{\bbr} |(\frac{2\bar{u}_1^2}{\tilde{\rho}^2}d_{1x_1}^2)|^2 dx_1dt \leq C \int_{0}^{t} \|d_{1x_1}\|_{L_{x_1}^2}^2 \|d_{1x_1}\|_{L_{x_1}^{\infty}}^2 dt \leq C_T \frac{\varepsilon^4}{\delta^9},
  \]
  and
  \[
  \int_{0}^{t}\int_{\bbr} |(\frac{2\bar{u}_1^2}{\tilde{\rho}^2}d_1d_{1x_1x_1})|^2 dx_1dt \leq C \int_{0}^{t} \|d_1\|_{L_{x_1}^{\infty}}^2 \|d_{1x_1x_1}\|_{L_{x_1}^2}^2 dt \leq C_T \frac{\varepsilon^4}{\delta^9}.
  \]
  The other terms in \eqref{24} can be analyzed similarly and the details will be omitted for brevity.
  Substituting these estimates into \eqref{24}, using the elliptic estimates $\|\triangle\Psi\| \sim\|\nabla^2\Psi\|$ and $\|\nabla\triangle\Psi\| \sim \|\nabla^3\Psi\|$ and taking $\frac{\varepsilon}{\delta^4}$, $\varepsilon$ and $E$ suitably small, we can complete the proof of Lemma \ref{lemma4.4}.
\end{proof}

\begin{lemma} \label{lemma4.5}
  There exists a positive constant $C_T$ such that for $0 \leq \tau \leq \tau_1(\varepsilon)$,
  \begin{align}
  \begin{aligned} \label{LEM4.5}
  &\sup_{0 \leq \tau \leq \tau_1(\varepsilon)} \| (\phi, \Psi)(\tau) \|_2^2  + \int_{0}^{\tau_1(\varepsilon)}\big[ \|\bar{u}_{1y_1}^{1/2} (\phi,  \psi_1) \|^2 + \|(\nabla\phi, \nabla\Psi)\|_1^2 + \|\nabla^3\Psi\|^2 \big] d\tau \\
  &\leq C_T \frac{\varepsilon}{\delta^4} + C\| (\phi_0, \Psi_0) \|_2^2.
  \end{aligned}
  \end{align}
\end{lemma}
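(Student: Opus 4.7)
The plan is to close the hierarchy of energy estimates by performing the top-order bound for the velocity; this yields the missing $\sup_{\tau}\|\nabla^2\Psi\|^2$ together with the dissipation $\int\|\nabla^3\Psi\|^2\,d\tau$, which will absorb the unabsorbed remainder $CE^2\int_0^{\tau_1(\varepsilon)}\|\nabla^3\Psi\|^2\,d\tau$ on the right-hand side of \eqref{LEM4.4}. To produce these, I would apply $\nabla$ to the second equation of \eqref{REF} (after dividing by $\rho$), multiply the resulting identity by $-\triangle\nabla\Psi$, and integrate over $[0,\tau]\times\bbr\times\bbt_\varepsilon$. Mimicking the derivation of \eqref{15}--\eqref{16}, the time-derivative term contributes $(\tfrac{1}{2}|\nabla^2\Psi|^2)_\tau$ after integration by parts in $\tau$ and $y$, while the viscous pieces $\mu\triangle\Psi+(\mu+\lam)\nabla\mathrm{div}\,\Psi$ generate a coercive contribution equivalent to $\|\nabla^3\Psi\|^2$ by the same elliptic regularity observation $\|\triangle\nabla\Psi\|\sim\|\nabla^3\Psi\|$ already used in Lemma~\ref{lemma4.3}.

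The remaining terms are handled by the systematic estimation used throughout Section~4. Every error term falls into one of three categories: (i) products of the form (low-order factor)$\times\nabla^3\Psi$ or (low-order factor)$\times\nabla^2\phi$, which Young's inequality splits into a small fraction of the $\nabla^3\Psi$-dissipation plus the square of the low-order factor, already controlled by Lemma~\ref{lemma4.4}; (ii) nonlinear triple products of perturbation quantities, dominated via H\"older's inequality and the $\varepsilon$-uniform Sobolev embeddings $\|f\|_{L^\infty(\bbr\times\bbt_\varepsilon)}\le C\|f\|_{H^2}$ and $\|f\|_{L^4}\le C\|f\|_{H^1}$, producing factors $E^2$ times norms already in the energy functional; and (iii) source terms built from the profiles $(\bar\rho,\bar u_1,d_1,d_2)$, which after undoing the scaling $(y_1,y_2)=(\varepsilon^{-1}x_1,\varepsilon^{-1}x_2)$ reduce to spatial integrals of derivatives of the profiles and are bounded using Lemmas~\ref{lemma2.2} and~\ref{lemma2.3}. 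The most singular profile contribution comes from $\|\partial_{x_1}^3(d_1,d_2)\|_{L^2}^2=O((\varepsilon/\delta^4)^2)$, whose $\tau$-integral produces at worst a term of order $\varepsilon/\delta^4$ on the right of \eqref{LEM4.5}. Adding the resulting estimate to a sufficiently large multiple of \eqref{LEM4.4}, then choosing $E$ small enough to absorb the residual $CE^2\int\|\nabla^3\Psi\|^2\,d\tau$ into the left-hand dissipation and taking $\frac{\varepsilon}{\delta^4}$ suitably small, closes the bound~\eqref{LEM4.5}.

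The main obstacle is essentially organizational: one must verify that every top-order error term splits cleanly into a piece absorbed by the $\nabla^3\Psi$-dissipation and a piece fitting the $C_T\varepsilon/\delta^4$ budget, without losing an extra negative power of $\delta$. The most delicate source term, $(2\mu+\lam)\nabla\bigl[\tilde\rho^{-1}(-d_1\bar u_1+d_2)_{y_1y_1}\bigr]\cdot\triangle\nabla\Psi$, forces an $L^2$-control of three $x_1$-derivatives of the hyperbolic wave, which is precisely the case $k=3$ of Lemma~\ref{lemma2.3}; this is the quantitative reason the hyperbolic wave was constructed with $H^3$-regularity in Section~2. No additional structural cancellations beyond those already exploited in Lemmas~\ref{lemma4.2} and~\ref{lemma4.4} are needed, because at the top order the perturbation terms and the profile terms decouple via Young's inequality into the three categories described above.
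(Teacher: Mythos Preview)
Your proposal is correct and follows essentially the same route as the paper: divide the momentum equation by $\rho$, apply $\nabla$, multiply by $-\nabla\triangle\Psi$, integrate, and then combine the resulting inequality with \eqref{LEM4.4} to absorb the $CE^2\int\|\nabla^3\Psi\|^2\,d\tau$ remainder. The paper's treatment of the error terms $N(\tau,y_1,y_2)$ matches your categories (i)--(iii) almost verbatim, and your identification of the $k=3$ case of Lemma~\ref{lemma2.3} as the governing profile term is exactly right.
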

\begin{proof}
  We   divide the second equation of \eqref{REF} by $\rho$,   apply the operator $\nabla$ to the resulting equation and then multiply the final equation by $-\nabla\triangle\Psi$ to obtain
  \begin{align}
    \begin{aligned} \label{27}
      &(\frac{|\nabla^2\Psi|^2}{2})_\tau - div(\psi_{i\tau y_j}\nabla\psi_{iy_j} + \frac{\mu + \lam}{\rho}div\Psi_{y_i}\nabla div\Psi_{y_i} - \frac{\mu + \lam}{\rho}div\Psi_{y_i}\triangle\Psi_{y_i}) \\
      &\quad + \frac{\mu}{\rho}|\nabla\triangle\Psi|^2 + \frac{\mu + \lam}{\rho}|\nabla^2 div\Psi|^2 \\
      &= u_i\nabla\Psi_{y_i}\cdot\nabla\triangle\Psi + \frac{p'(\rho)}{\rho} \nabla^2\phi\cdot\nabla\triangle\Psi + \psi_{jy_i}\nabla\psi_i\cdot\nabla\triangle\psi_j + \tilde{u}_{1y_1}\Psi_{y_1}\cdot\triangle\Psi_{y_1} \\
      &\quad + \tilde{u}_{1y_1y_1}\psi_1\triangle\psi_{1y_1} + \tilde{u}_{1y_1}\nabla\psi_1\cdot\nabla\triangle\psi_1 + (\frac{p'(\rho)}{\rho})'\phi_{y_i}\nabla\phi\cdot\nabla\triangle\psi_i \\ 
      &\quad + (\frac{p'(\rho)}{\rho})'\tilde{\rho}_{y_1}\nabla\phi\cdot\triangle\Psi_{y_1} + (\frac{p'(\rho)}{\rho})'\tilde{\rho}_{y_1}\nabla\phi\cdot\nabla\triangle\psi_1 + \big[(\frac{p'(\rho)}{\rho})' - (\frac{p'(\tilde{\rho})}{\tilde{\rho}})'\big]\tilde{\rho}_{y_1}^2\triangle\psi_{1y_1} \\ 
      &\quad + (\frac{p'(\rho)}{\rho} - \frac{p'(\tilde{\rho})}{\tilde{\rho}}) \tilde{\rho}_{y_1y_1}\triangle\psi_{1y_1} + \frac{\mu}{\rho^2}\triangle\psi_i\nabla\phi\cdot\nabla\triangle\psi_i + \frac{\mu}{\rho^2}\tilde{\rho}_{y_1}\triangle\Psi\cdot\triangle\Psi_{y_1} \\
      &\quad - \frac{\mu + \lam}{\rho^2}\phi_{y_i}\nabla div\Psi\cdot\nabla\triangle\psi_i - \frac{\mu + \lam}{\rho^2}\tilde{\rho}_{y_1}\nabla div\Psi\cdot\nabla\triangle\psi_1 + \frac{\mu + \lam}{\rho^2}div\Psi_{y_i}\nabla\phi\cdot\nabla div\Psi_{y_i}  \\
      &\quad + \frac{\mu + \lam}{\rho^2}\tilde{\rho}_{y_1}\nabla div\Psi\cdot\nabla div\Psi_{y_1}+ \frac{\mu + \lam}{\rho^2}div\Psi_{y_i}\nabla\phi\cdot\nabla \triangle\psi_i + \frac{\mu + \lam}{\rho^2}\tilde{\rho}_{y_1}\triangle\Psi_{y_1}\cdot\nabla div\Psi \\
      &\quad- \frac{2\mu + \lam}{\rho}(\frac{-d_1\bar{u}_1 + d_2}{\tilde{\rho}})_{y_1y_1y_1}\triangle\psi_{1y_1}  + \frac{2\mu + \lam}{\rho^2}(\frac{-d_1\bar{u}_1 + d_2}{\tilde{\rho}})_{y_1y_1}\nabla\phi\cdot\nabla\triangle\psi_1 \\
      &\quad+ \frac{2\mu + \lam}{\rho^2}(\frac{-d_1\bar{u}_1 + d_2}{\tilde{\rho}})_{y_1y_1}\tilde{\rho}_{y_1}\triangle\psi_{1y_1} + \frac{2\mu + \lam}{\tilde{\rho}\rho}\bar{u}_{1y_1y_1y_1}\phi\triangle\psi_{1y_1} \\
      &\quad + \frac{2\mu + \lam}{\rho^2}\bar{u}_{1y_1y_1}\nabla\phi\cdot\nabla\triangle\psi_1 - \frac{(2\mu + \lam)(\tilde{\rho} + \rho)}{\tilde{\rho}^2\rho^2}\tilde{\rho}_{y_1}\bar{u}_{1y_1y_1}\phi\triangle\psi_{1y_1} \\
      &\quad + \big[\frac{(\tilde{\rho} \tilde{u}_1^2 - \bar{\rho} \bar{u}_1^2 + \bar{u}_1^2d_1 - 2\bar{u}_1d_2)_{y_1}}{\tilde{\rho}}\big]_{y_1}\triangle\psi_{1y_1} + \big[\frac{(p(\tilde{\rho}) - p(\bar{\rho}) - p'(\bar{\rho})d_1)_{y_1}}{\tilde{\rho}}\big]_{y_1}\triangle\psi_{1y_1} \\
      &:= N(\tau, y_1, y_2).
    \end{aligned}
  \end{align}
  Integrating the equation \eqref{27} over $[0, ~\tau]\times\bbr\times\bbt_\varepsilon$ yields that
  \begin{align}
    \begin{aligned} \label{28}
    \|\nabla^2\Psi(\tau)\|^2 + \int_{0}^{\tau} \|\nabla\triangle\Psi\|^2 d\tau
    \leq C \|\nabla^2\Psi_0\|^2 + C |\int_{0}^{\tau}\int_{\bbt_\varepsilon}\int_{\bbr}  N(\tau, y_1, y_2) dy_1dy_2d\tau|.
    \end{aligned}
  \end{align}
  Now we just estimate some terms on the right-hand side of \eqref{28} as follows. 
  It follows from Young's inequality that
  \begin{align*}
  &C |\int_{0}^{\tau}\int_{\bbt_\varepsilon}\int_{\bbr} u_i\nabla\Psi_{y_i}\cdot\nabla\triangle\Psi + \frac{p'(\rho)}{\rho} \nabla^2\phi\cdot\nabla\triangle\Psi dy_1dy_2d\tau| \\
  &\leq \frac{1}{160} \int_{0}^{\tau} \|\nabla\triangle\Psi\|^2 d\tau + C \int_{0}^{\tau} (\|\nabla^2\phi\|^2 + \|\nabla^2\Psi\|^2) d\tau.
  \end{align*}
  By Young's inequality, Lemma \ref{lemma2.2} and Lemma \ref{lemma2.3}, one has
  \begin{align*}
  &C|\int_{0}^{\tau}\int_{\bbt_\varepsilon}\int_{\bbr} (\frac{p'(\rho)}{\rho} -\frac{p'(\tilde{\rho})}{\tilde{\rho}})\tilde{\rho}_{y_1y_1}\triangle\psi_{1y_1} dy_1dy_2d\tau| \\
  &\leq \frac{1}{160} \int_{0}^{\tau} \|\triangle\psi_{1y_1}\|^2 d\tau + C \int_{0}^{\tau}\int_{\bbt_\varepsilon}\int_{\bbr} |\tilde{\rho}_{y_1y_1}\phi|^2 dy_1dy_2d\tau \\
  &\leq \frac{1}{160} \int_{0}^{\tau} \|\triangle\psi_{1y_1}\|^2 d\tau + C_T (\frac{\varepsilon^3}{\delta^4} + \frac{\varepsilon^5}{\delta^7}) \sup_{0 \leq \tau \leq \tau_1(\varepsilon)} \|\phi\|^2.
  \end{align*}
  By H\"{o}lder's inequality, Sobolev's inequality and Young's inequality, it holds that
  \begin{align*}
  &C|\int_{0}^{\tau}\int_{\bbt_\varepsilon}\int_{\bbr} \frac{\mu}{\rho^2}\triangle\psi_i\nabla\phi\cdot\nabla\triangle\psi_i dy_1dy_2d\tau|
  \leq C \int_{0}^{\tau} \|\nabla\triangle\Psi\|\|\triangle\Psi\|_{L^4}\|\nabla\phi\|_{L^4} d\tau \\
  &\leq C \int_{0}^{\tau} \|\nabla\triangle\Psi\|\|\triangle\Psi\|_1\|\nabla\phi\|_1 d\tau
  \leq \frac{1}{160} \int_{0}^{\tau} \|\nabla\triangle\Psi\|^2 d\tau + CE^2 \int_{0}^{\tau} \|\triangle\Psi\|_1^2 d\tau.
  \end{align*}
  By Young's inequality, Lemma \ref{lemma2.2} and Lemma \ref{lemma2.3}, one has
  \begin{align*}
  &C |\int_{0}^{\tau}\int_{\bbt_\varepsilon}\int_{\bbr} \frac{\mu}{\rho^2}\tilde{\rho}_{y_1}\triangle\Psi\cdot\triangle\Psi_{y_1} dy_1dy_2d\tau| \\
  &\leq \frac{1}{160} \int_{0}^{\tau} \|\triangle\Psi_{y_1}\|^2 d\tau + C \int_{0}^{\tau} \|\tilde{\rho}_{y_1}\triangle\Psi\|^2 d\tau \\
  &\leq \frac{1}{160} \int_{0}^{\tau} \|\triangle\Psi_{y_1}\|^2 d\tau + C \int_{0}^{\tau} \|\tilde{\rho}_{y_1}\|_{L_{y_1}^\infty}^2 \|\triangle\Psi\|^2 d\tau \\
  &\leq \frac{1}{160} \int_{0}^{\tau} \|\triangle\Psi_{y_1}\|^2 d\tau + C_T (\frac{\varepsilon^2}{\delta^2} + \frac{\varepsilon^4}{\delta^5}) \int_{0}^{\tau} \|\triangle\Psi\|^2 d\tau.
  \end{align*}
  By Young's inequality, Lemma \ref{lemma2.2} and Lemma \ref{lemma2.3}, it holds that
  \begin{align*}
  &C |\int_{0}^{\tau}\int_{\bbt_\varepsilon}\int_{\bbr} \frac{2\mu + \lam}{\rho^2}(\frac{-d_1\bar{u}_1 + d_2}{\tilde{\rho}})_{y_1y_1}\nabla\phi\cdot\nabla\triangle\psi_1 dy_1dy_2d\tau| \\
  &\leq C \int_{0}^{\tau} \|(\frac{-d_1\bar{u}_1 + d_2}{\tilde{\rho}})_{y_1y_1}\|_{L_{y_1}^\infty} \|\nabla\phi\|\|\nabla\triangle\psi_1\| d\tau \\
  &\leq \frac{1}{160} \int_{0}^{\tau} \|\nabla\triangle\psi_1\|^2 d\tau + C_T (\frac{\varepsilon^6}{\delta^7} + \frac{\varepsilon^8}{\delta^8}) \int_{0}^{\tau} \|\nabla\phi\|^2 d\tau.
  \end{align*}
  Similarly, one has
  \begin{align*}
  &C|\int_{0}^{\tau}\int_{\bbt_\varepsilon}\int_{\bbr} \big[\frac{(\tilde{\rho} \tilde{u}_1^2 - \bar{\rho} \bar{u}_1^2 + \bar{u}_1^2d_1 - 2\bar{u}_1d_2)_{y_1}}{\tilde{\rho}}\big]_{y_1}\triangle\psi_{1y_1} dy_1dy_2d\tau| \\
  &\leq \frac{1}{160} \int_{0}^{\tau} \|\triangle\psi_{1y_1}\|^2 d\tau + C \int_{0}^{\tau}\int_{\bbt_\varepsilon}\int_{\bbr} |\big[\frac{(\frac{1}{\tilde{\rho}}(\bar{u}_1d_1 - d_2)^2)_{y_1}}{\tilde{\rho}}\big]_{y_1}|^2 dy_1dy_2d\tau \\
  &\leq \frac{1}{160} \int_{0}^{\tau} \|\triangle\psi_{1y_1}\|^2 d\tau + C_T \frac{\varepsilon^5}{\delta^9} + C_T \frac{\varepsilon^9}{\delta^{15}} + C_T \frac{\varepsilon^7}{\delta^{12}}.
  \end{align*}
 The other terms in $N(\tau, y_1, y_2)$ can be estimated similarly as before. Then substituting all the estimates into \eqref{28} and using the standard elliptic estimates $\|\triangle\Psi\| \sim \|\nabla^2\Psi\|$ and $\|\nabla\triangle\Psi\| \sim \|\nabla^3\Psi\|$, it holds that
  \begin{equation}\label{29}
  \begin{array}{ll}
  \di \|\nabla^2\Psi(\tau)\|^2 + \int_{0}^{\tau} \|\nabla^3\Psi\|^2 d\tau \\
  \di \leq C_T \frac{\varepsilon^5}{\delta^9} + C \|\nabla^2\Psi_0\|^2 + C \int_{0}^{\tau} \|(\nabla^2\phi, \nabla^2\Psi)\|^2 d\tau + C E^2 \int_{0}^{\tau} \|(\nabla \phi, \nabla \Psi, \nabla^2\Psi)\|_1^2 d\tau \\
  \di \quad + C_T (\frac{\varepsilon^3}{\delta^4} + \frac{\varepsilon^5}{\delta^7}) \sup_{0 \leq \tau \leq \tau_1(\varepsilon)} \|(\phi, \psi_1)\|^2 
  + C_T (\frac{\varepsilon^2}{\delta^2} + \frac{\varepsilon^4}{\delta^5}) \int_{0}^{\tau} \|(\nabla\phi, \nabla\Psi, \nabla^2\Psi)\|^2 d\tau.
  \end{array}
  \end{equation}

  Combining \eqref{LEM4.4} and \eqref{29} and taking $\frac{\varepsilon}{\delta^4}, \varepsilon$ and $E$ suitably small, we complete the proof of Lemma \ref{lemma4.5}.
\end{proof}

Finally,    taking $\frac{\varepsilon}{\delta^4}, \varepsilon$ and $E$ sufficiently small, saying $\varepsilon \leq \varepsilon_1(\varepsilon), \delta = \varepsilon^{1/6}, E \ll 1$, we obtain the desired a priori estimate \eqref{PRO3.2}, which finishes the proof of the a priori estimates in Proposition \ref{proposition3.2}. Therefore the Theorem \ref{theorem1} is proved.

 \bigskip
   
 \section*{Acknowledgement} 
 The research  of D. Wang is partially supported by the
National Science Foundation under grants DMS-1312800 and DMS-1613213. 
Y. Wang is supported by NSFC grants No. 11671385 and 11688101 and CAS Interdisciplinary Innovation Team.

 \bigskip


\end{document}